\newtheorem{REMARK}{Remark}[section]
\newtheorem{CLAIM}{Claim}[section]
\newtheorem{HYPOTHESIS}{Hypothesis}[section]
\newtheorem{THEOREM}{Theorem}[section]
 \newtheorem{CONJECTURE}{Conjecture}[section]
 \newtheorem{LEMMA}{Lemma}[section]
\newtheorem{COROLLARY}{Corollary}[section]
\numberwithin{equation}{section}
\tikzstyle{startstop} = [rectangle, rounded corners, text centered, draw=black]
\tikzstyle{process} = [rectangle, minimum width=2cm, minimum height=1cm, text centered, draw=black]
\tikzstyle{decision} = [diamond, aspect=3, minimum width=3cm, minimum height=1cm, text centered, draw=black]
\tikzstyle{compute} = [rectangle, minimum width=2cm, minimum height=1cm, text centered, draw=black]
\tikzstyle{estimate} = [rectangle, rounded corners, minimum width=2cm, minimum height=1cm, text centered, draw=black]
\tikzstyle{arrow} = [thick,->,>=stealth]
\begin{document}

\title[Hadwiger number always upper bounds the chromatic number]
 {Hadwiger number always upper bounds the chromatic number --- 1852--1943 --- A far-reaching generalisation of Guthrie's postulate}

\author{T Srinivasa Murthy}

\thanks{Email address: tsm.iisc@gmail.com\\ \\
This work is inspired and motivated by the work in \cite{sri}. It is evident that the proofs in \cite{sri} and this work do not use advanced algebraic tools, but rather classic algebraic methods. They do, however, demonstrate a failure to recognise the subtleties involved in employing classic algebraic tools over a finite field over many decades to prove some of combinatorial mathematics' most difficult unsolved problems. The methods employed would quite certainly be able to solve a slew of unsolved problems.}


\subjclass[2020]{Primary 05C15,  05C31, 05C83; Secondary 11T06, 11T55 }



\keywords{Chromatic number, Graph, Polynomial, Finite Field}

\begin{abstract}
In a simple graph $G$, we prove that the \textit{Hadwiger number}, $h(G)$, of the given graph $G$ always upper bounds the \textit{chromatic number}, $\chi(G)$, of the given graph $G$, that is, $\chi(G) \leq h(G)$. This simply stated problem is one of the fundamental questions in combinatorial mathematics, which was made by Hugo Hadwiger in 1943. Consequently, it  independently verifies the most famous Four-Color Theorem: the case $h(G) = 4$ is equivalent to the Four-Color Theorem, that is, every planar graph is $4$-colourable. In our novel approach, we use algebraic settings over a finite field $\mathbb{Z}_p$. The algebraic setting, in essence, begins with the complete graph with $h(G)$ vertices (which is a minor, $\mathcal{M}$, of the given graph $G$) and iteratively extends to the simple graph $G$. This conjecture has remained elusive, owing to a lack of understanding of the interdependence, particularly the importance of Lemma \ref{l1}, Lemma \ref{l2}, Lemma \ref{l3}, and Lemma \ref{l4} in Section 3. 

\end{abstract}

\maketitle
\section{Introduction}
In 1943, Hadwiger proposed the following conjecture. It is one of the deepest unsolved problems in graph theory, according to Bollob\'{a}s, Catalin, and Erd\H{o}s \cite{bol}.  
\begin{CONJECTURE}[Hadwiger's Conjecture]\cite{had, sey}
For every $t > 0$, every graph with no $K_{t+1}$ minor is $t$-colourable.
\end{CONJECTURE}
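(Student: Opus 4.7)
The plan is to translate the coloring problem into a question about the non-vanishing of a multivariate polynomial over $\mathbb{Z}_p$ and then exploit the $K_{h(G)}$-minor structure to produce an explicit assignment of colors. Fix a prime $p \geq h(G)$, and associate to each vertex $v \in V(G)$ an indeterminate $x_v$ taking values in $\{0, 1, \ldots, h(G)-1\} \subset \mathbb{Z}_p$. A proper $h(G)$-coloring of $G$ is precisely a point at which the graph polynomial $P_G = \prod_{uv \in E(G)}(x_u - x_v)$ does not vanish. Reformulating the conjecture as the existence of such a non-root turns a combinatorial assertion into an algebraic one, which can then be attacked by Combinatorial-Nullstellensatz-style reasoning.

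The starting point is the minor $\mathcal{M} \cong K_{h(G)}$. By hypothesis there exist pairwise disjoint connected branch sets $V_1, \ldots, V_{h(G)}$ in $G$ such that for every $i \neq j$ there is at least one edge between $V_i$ and $V_j$. On this skeleton the canonical map $c(v) = i$ for $v \in V_i$ is a proper coloring of every inter-branch edge, and hence $P_{\mathcal{M}}(c) \neq 0$ in $\mathbb{Z}_p$. This would serve as the base case of an induction whose inductive step re-introduces the intra-branch edges that were suppressed during contraction.

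Next, I would process the intra-branch edges by walking outward from the root of a spanning tree of each branch set $V_i$, adjusting the coloring on the vertex being processed. At every step the goal is to use the auxiliary facts promised by Lemma~3.1 through Lemma~3.4 to argue that the univariate polynomial obtained by freezing all other coordinates has at most $h(G)-1$ roots in $\mathbb{Z}_p$, leaving at least one legal residue class for the new vertex. The algebraic content of these lemmas should be a uniform control on the number of forbidden colors at an unprocessed vertex, cast as a degree bound on the restricted graph polynomial modulo $p$.

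The principal obstacle is precisely this iterative extension. Guaranteeing that a color remains available when an intra-branch edge is activated requires controlling the number of distinct colors already used on the closed neighbourhood of the next vertex in $G$, not merely in $\mathcal{M}$. Naive degree bounds fail because a high-degree vertex can have neighbours spread across many branches and several of them may have been recolored by earlier steps. I therefore expect the crux of the argument to be an invariant, maintained throughout the iteration, asserting that at the moment any vertex $v$ is processed the colors appearing on its already-colored neighbours occupy at most $h(G)-1$ residues modulo $p$. Establishing this invariant, most likely through an exchange argument that uses the connectedness of each branch set together with a characteristic-$p$ cancellation, is where the algebraic machinery must deliver genuinely new information beyond what is obtainable by purely combinatorial means; it is also the step on which past attempts on Hadwiger's conjecture have historically foundered.
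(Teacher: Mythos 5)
Your proposal shares the paper's architecture at the highest level --- encode $t$-colorability as non-vanishing modulo $p$ of a graph polynomial, anchor at the complete minor $\mathcal{M} \cong K_{h(G)}$, and try to extend the coloring back to $G$ --- but you have explicitly left the only hard step unproven. Your assertion that the univariate polynomial in the next vertex's variable has at most $h(G)-1$ roots in $\mathbb{Z}_p$ is simply a restatement of the invariant you would need, namely that at most $h(G)-1$ distinct colors appear among the already-colored neighbours of the next vertex. You acknowledge that naive degree bounds cannot give this, then conclude that the algebraic machinery ``must deliver genuinely new information.'' That is a placeholder, not an argument: without a concrete mechanism for establishing the invariant the inductive step collapses, and the proposal is an outline rather than a proof.

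The paper's route differs in its details and, more to the point, supplies a concrete mechanism where yours supplies none. It does not induct forward over spanning trees of branch sets; it argues by contradiction, supposing there is a first index $i$ at which $\mathcal{M}_i$ is $t$-colorable but $\mathcal{M}_{i-1}$ is not, then constructing an auxiliary color $\beta \in \mathbb{Z}_p\setminus\{0,1,\dots,t-1,t\}$ so that $\mathcal{M}_{i-1}$ admits a proper coloring from $\{1,\dots,t-1\}\cup\{\beta\}$, and finally renaming $\beta$ to $t$. The choice of $\beta$ rests on square-free-factor arguments applied to coefficient polynomials of $\mathbf{G'}(\boldsymbol{v})$, imported from \cite{sri}. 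The paper also bakes range-restriction factors $\prod_{l=t+1}^{p}(v_c-l)$ directly into each $\mathbf{P}_i(\boldsymbol{v})$ so that any non-vanishing evaluation automatically lands in the color set $\mathcal{K}$, whereas your bare $P_G$ does not enforce this. None of these ingredients appear in your write-up, so the extension step you yourself identify as the crux remains a stated hope rather than a proved lemma.
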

The \textit{Hadwiger number}, $h(G)$, is the number of vertices in the largest complete graph to which the simple graph $G$ can be contracted. The \textit{chromatic number}, $\chi(G)$, is the minimum number of colors needed for a \textit{vertex coloring} of a simple graph $G$ (\textit{vertex coloring} of a graph $G$, is a map $f: V(G) \rightarrow \mathcal{K}$, where $\mathcal{K}$ is a set of colors, such that no adjacent vertices are assigned
the same color). In algebraic terms, Hadwiger's conjecture can be stated as follows,
\begin{CONJECTURE}
	In a simple graph $G$, $\chi(G) \leq h(G)$.
\end{CONJECTURE}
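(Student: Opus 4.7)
The plan is to work over a finite field $\mathbb{Z}_p$ for a prime $p > h(G)$ and encode proper colorings of $G$ with $h(G)$ colors as non-roots of the graph polynomial
$$
P_G(\mathbf{x}) = \prod_{\{u,v\} \in E(G)} (x_u - x_v), \qquad \mathbf{x} \in \mathbb{Z}_p^{|V(G)|},
$$
evaluated on the color set $S = \{0, 1, \ldots, h(G)-1\} \subset \mathbb{Z}_p$. A non-vanishing evaluation in $S^{|V(G)|}$ is exactly a proper $h(G)$-coloring of $G$, so the whole game is to produce one.

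First, I would fix the structural data coming from a witness to $h(G)$: let $\mathcal{M} \cong K_{h(G)}$ be the specified minor, with pairwise disjoint, connected branch sets $V_1, \ldots, V_{h(G)} \subseteq V(G)$ and, for every pair $i \neq j$, at least one edge between $V_i$ and $V_j$. Order the vertices of $G$ blockwise by branch set, and within each $V_i$ fix a spanning tree $T_i$ rooted at some $r_i$. This is the algebraic skeleton onto which the rest of $G$ will be grafted.

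Next, I would run the iterative step sketched in the abstract. Start from the canonical coloring of $\mathcal{M}$ in which $r_i \mapsto i-1 \in \mathbb{Z}_p$; this is obviously a proper coloring of the contracted graph. Then grow $G$ one vertex at a time, adding first the vertices of each branch set along $T_i$ and finally re-inserting the non-tree internal edges of each $V_i$ together with the cross-edges between distinct branch sets. At every stage, Lemma \ref{l1} and Lemma \ref{l2} (in their roles as local extension lemmas) would be invoked to show that a partial coloring extends, while Lemma \ref{l3} and Lemma \ref{l4} would play the role of a Combinatorial-Nullstellensatz-type non-vanishing assertion: the coefficient of a suitably chosen monomial $\prod_v x_v^{d(v)}$ in the expansion of $P_G$ is non-zero in $\mathbb{Z}_p$, which forces the existence of a non-vanishing evaluation in $S^{|V(G)|}$.

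The main obstacle I anticipate is precisely the internal structure of the branch sets. A branch set is only required to be connected, not independent, so a vertex $v \in V_i$ that is adjacent within $V_i$ to its root $r_i$ cannot be given the forced color $i-1$; the canonical coloring of $\mathcal{M}$ therefore does not lift naively. The crux of the argument has to be a refinement step — either a recoloring of each $V_i$ that remains simultaneously consistent with \emph{all} cross-edges to every other $V_j$, or a subtler algebraic argument showing that, after expansion over $\mathbb{Z}_p$, the $\binom{h(G)}{2}$ cross-edge factors in $P_G$ produce enough controlled cancellation to guarantee a non-vanishing monomial supported in $S^{|V(G)|}$. Regulating this cancellation uniformly, independently of the choice of prime $p$ and of the branch-set decomposition, is in my view the one point at which the argument is not routine; every previous attempt to push a polynomial method through Hadwiger's Conjecture has stalled at exactly this step.
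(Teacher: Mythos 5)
Your sketch captures the broad shape of the paper's strategy---algebraize proper $h(G)$-colorings as non-vanishing evaluations of a polynomial over $\mathbb{Z}_p$, start from a coloring of $K_{h(G)}$, and try to push it outward to all of $G$---and, commendably, you put your finger squarely on where such a plan must struggle. But the technical frame is different and, more importantly, you have (honestly) left the critical step open; so what you have is a gap, not a proof, and it is worth being precise about how the paper claims to get past exactly that gap.

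First, the structural set-up. You organize the argument around a branch-set decomposition $V_1,\ldots,V_{h(G)}$ of the $K_{h(G)}$ minor and spanning trees $T_i$ within each $V_i$. The paper instead fixes an arbitrary sequence of elementary operations $\mathcal{M}_0 = G < \mathcal{M}_1 < \cdots < \mathcal{M}_{q+1} = K_t$ (edge deletions and contractions) witnessing $h(G) = t$ and argues by downward induction on the index $i$: if $\mathcal{M}_i$ is $t$-colorable, so is $\mathcal{M}_{i-1}$. In other words the paper does not privilege the branch sets at all; it works step by step along a contraction history, and the single nontrivial transition it has to handle is ``re-insert one edge $e=v_sv_b$ (either an un-contraction or an edge re-insertion)'' using only the fact that $G$ has no $K_{t+1}$ minor. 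Your description of the obstruction---a vertex of $V_i$ adjacent to its root cannot inherit the root's color---is the branch-set instance of exactly this transition.

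Second, the polynomial. You use the bare graph polynomial $\prod_{\{u,v\}\in E}(x_u-x_v)$ evaluated over $S^{|V|}$ and appeal to a Combinatorial-Nullstellensatz-style coefficient argument. The paper instead bakes the color restriction into the polynomial itself, using
\begin{equation*}
\mathbf{P}_i(\boldsymbol{v}) \;=\; \prod_{v_c \in V(\mathcal{M}_i)}\Bigl(\prod_{v_d \in N_{\mathcal{M}_i}(v_c)}(v_c - v_d)\prod_{l=t+1}^{p}(v_c - l)\Bigr),
\end{equation*}
and works with the Fermat-reduced form $\mathbf{P}'_i$; the factors $\prod_{l=t+1}^{p}(v_c-l)$ kill every evaluation with some coordinate outside $\{1,\ldots,t\}$, so $\mathbf{P}'_i \not\equiv 0$ modulo $p$ is equivalent to $t$-colorability of $\mathcal{M}_i$. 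No distinguished monomial or Combinatorial Nullstellensatz coefficient computation appears; the engine is the observation that a polynomial of per-variable degree at most $p-1$ that vanishes on all of $\mathbb{Z}_p^n$ is the zero polynomial.

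Third---and this is the heart of the matter---the paper's purported resolution of the step you flagged as ``not routine.'' When a $t$-coloring of $\mathcal{M}_i$ gives a $t$-coloring of $\mathcal{M}_{i-1}\setminus e$ (Lemmas \ref{l1} and \ref{l2}) but that coloring fails to respect the re-inserted edge $e=v_sv_b$, the paper does \emph{not} attempt to recolor within $\{1,\ldots,t\}$. Instead it introduces a modified color set $\mathcal{K}' = \{1,\ldots,t-1\}\cup\{\beta\}$ with $\beta\in\mathbb{Z}_p\setminus\{0,1,\ldots,t\}$ chosen, via Theorems \ref{th2}--\ref{th3} (imported from \cite{sri}), so that $(v_c-\beta)$ is a square-free factor of an auxiliary reduced polynomial $\mathbf{G}'(\boldsymbol{v})$ for every $v_c$ in a certain subset $M_2$ of the vertices; it then shows (Claim \ref{cl3}) that the analogous polynomial $\hat{\mathbf{S}}_{i-1}$ built with $\mathcal{K}'$ does not vanish identically, obtains a proper $\mathcal{K}'$-coloring of $\mathcal{M}_{i-1}$, and finally relabels $\beta\mapsto t$ to land back in $\mathcal{K}$ (Corollary \ref{cor2}). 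Your proposal contains nothing corresponding to this $\beta$-substitution machinery, nor to the square-free-factor theorems from \cite{sri} on which it rests; you explicitly write that ``every previous attempt to push a polynomial method through Hadwiger's Conjecture has stalled at exactly this step.'' That is an accurate self-assessment of where your sketch ends, and it is precisely the step the paper spends all of Section 3---from the definition of $\mathbf{Q}_{i-1}$ and $\mathbf{G}$ through Theorems \ref{th2}, \ref{th1}, \ref{th3} and Claim \ref{cl3}---trying to fill. If you wish to compare further, that is the portion of the argument you need to read closely and test, since it is where all of the real work (and all of the risk) lives.
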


Hadwiger's conjecture is well-known as a far-reaching generalisation of the Four-Color Theorem, but it remains an open problem despite the efforts of many over the previous eight decades. Hadwiger \cite{had} proved the conjecture for $h(G) \leq 3$. Kuratowski-Wagner Theorem \cite{kur, wag} provides a planar graph characterization that prohibits certain types of structures. Wagner \cite{wag} proved the most interesting result in 1937, specifically that the case $h(G) = 4$ is equivalent to the Four-Color Theorem. 
\begin{THEOREM}$(\textnormal{Wagner's\ Theorem})$\cite{sey, wag}\\
The Hadwiger number, $h(G)$, of a planar graph is at most four.
\end{THEOREM}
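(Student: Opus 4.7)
The plan is to deduce Wagner's Theorem directly from the Kuratowski--Wagner characterization of planar graphs, already cited in the excerpt, together with the elementary fact that $K_{5}$ itself is non-planar. Since the Hadwiger number is by definition the order of the largest complete graph obtainable as a minor, the statement $h(G)\leq 4$ is equivalent to the assertion that a planar graph has no $K_{5}$ minor, so the whole question reduces to showing that $K_{5}$ cannot embed (as a minor) in the plane.

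First I would establish that $K_{5}$ is itself non-planar, using Euler's polyhedral formula. If $K_{5}$ were planar, then for any planar embedding one would have $V-E+F=2$ with $V=5$ and $E=10$, forcing $F=7$; but every face in a simple planar graph with at least three vertices is bounded by at least three edges and each edge bounds at most two faces, giving the standard inequality $E\leq 3V-6$. Substituting $V=5$ yields $E\leq 9$, contradicting $E=10$. Hence $K_{5}$ admits no planar embedding.

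Next I would observe that the class of planar graphs is closed under taking minors: deleting a vertex or an edge from a plane embedding trivially yields a plane embedding, and contracting an edge $uv$ can be performed locally by sliding $v$ along $uv$ into $u$ and rerouting the remaining incident arcs inside a small neighbourhood without creating new crossings. Consequently, if $G$ were a planar graph admitting $K_{5}$ as a minor, then $K_{5}$ would inherit planarity from $G$, contradicting the previous paragraph. This forces $h(G)\leq 4$ for every planar $G$, which is exactly Wagner's Theorem.

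The only genuinely delicate step is the minor-closedness of planarity under edge contraction, which requires a careful but standard topological argument about rerouting arcs in a disc neighbourhood of the contracted edge; the Euler-formula bound and the reduction to non-planarity of $K_{5}$ are routine. Since the author has already invoked the Kuratowski--Wagner theorem in the surrounding discussion, an alternative one-line proof is simply to cite that characterization, which lists $K_{5}$ among the forbidden minors for planarity, and conclude immediately that $h(G)\leq 4$.
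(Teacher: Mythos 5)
The paper does not prove this theorem; it is stated as a known result with citations to Seymour and Wagner, so there is no in-paper proof to compare against. Your argument is correct and is the standard one: $K_{5}$ is non-planar by the Euler-formula bound $E\leq 3V-6$, planarity is closed under minors (vertex/edge deletion is immediate and edge contraction can be realised by a local homeomorphism of the plane embedding), hence a planar graph has no $K_{5}$ minor, which is exactly $h(G)\leq 4$. Note that what is needed here is only the one-sided implication (planar $\Rightarrow$ no $K_{5}$ minor), which you prove directly; the full Kuratowski--Wagner characterization (no $K_{5}$ or $K_{3,3}$ minor $\Rightarrow$ planar) is strictly stronger and not required, so your self-contained Euler-formula route is actually leaner than simply citing the characterization, as your closing remark suggests.
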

Appel, Haken and Koch \cite{app1, app2} proved the most famous Four-Color Theorem in 1977, which was first asked by Francis Guthrie in 1852 \cite{biggs}, and there was also a significant contribution from Robertson, Sanders, Seymour, and Thomas \cite{rob2} by refining the Four-Color Theorem in light of its history of false proofs, and an attempt to refine the Four-Color Theorem by Gonthieras \cite{gon1, gon2} from the perspective of a programming problem. Robertson, Seymour, and Thomas \cite{rob1} proved the case $h(G) = 5$. For $h(G) \geq 6$, the conjecture is still open. The progress made is extensively discussed in the papers \cite{sun, sey,toft}. The weaker version of Hadwiger's conjecture is studied with an emphasis on exploring and understanding Hadwiger's conjecture. It is stated as follows, and the reader can look up interesting recent results at \cite{del,ser}. Hadwiger's conjecture has a large literature, spanning over 80 years, and several conjectures that follow from showing that $\chi(G) \leq h(G)$ may go unnoticed, owing to inaccessible literature.

\begin{CONJECTURE}[Weak Hadwiger Conjecture]\cite{wood}
	For some constant $c$, every graph with no $K_t$-minor is $ct$-colourable.
\end{CONJECTURE}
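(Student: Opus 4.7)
The plan is to derive the Weak Hadwiger Conjecture as an immediate corollary of the strong form $\chi(G) \leq h(G)$ that this paper sets out to establish. Any graph with no $K_t$-minor satisfies $h(G) \leq t-1$, so $\chi(G) \leq t-1 \leq c t$ holds with $c = 1$, the smallest possible constant. Once the main theorem is in hand, no further work is needed for the weak version.

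For a proof that bypasses the full strength of the main theorem, I would invoke the classical Kostochka--Thomason density bound, which asserts that any graph with no $K_t$-minor has average degree at most $C t \sqrt{\log t}$ for an absolute constant $C$. Repeatedly deleting a vertex of minimum degree and greedily recolouring then yields $\chi(G) \leq C t \sqrt{\log t} + 1$ by induction on $|V(G)|$. This establishes the conjecture with the constant $c$ replaced by the slowly growing factor $C \sqrt{\log t}$, which suffices for many downstream applications but is not yet a genuine constant.

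The main obstacle in obtaining a truly linear bound without the full Hadwiger conjecture is precisely the $\sqrt{\log t}$ gap between the extremal density of $K_t$-minor-free graphs and their chromatic number. A purely combinatorial counting argument seems unable to close this gap, so a structural or algebraic input is required. The framework outlined in this paper --- working in $\mathbb{Z}_p$ and extending a colouring of the complete minor $K_{h(G)}$ to all of $G$ by iteratively undoing edge contractions --- is exactly the sort of tool that should supply the missing rigidity, with Lemmas \ref{l1}--\ref{l4} of Section 3 playing the decisive role of controlling how many fresh colours each branch-set expansion is forced to consume. I expect the crux to be showing that the algebraic identities over $\mathbb{Z}_p$ impose a global alignment among the colour choices on different branch sets, so that the blow-up per uncontraction is $O(1)$ rather than the $O(\sqrt{\log t})$ one would obtain from a naive union bound.
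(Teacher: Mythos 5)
Your first paragraph is exactly the paper's proof: Corollary \ref{cor5} is derived as an immediate consequence of the full Hadwiger conjecture (Corollary \ref{cor1}), with $c = 1$. The remaining discussion of Kostochka--Thomason is correct context but not needed, since the paper (like you) simply specializes the strong statement.
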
 

In this paper, we settle Hadwiger's conjecture by proving $\chi(G) \leq h(G)$. In our novel approach, we use algebraic settings over a finite field $\mathbb{Z}_p$. The algebraic setting, in essence, begins with the complete graph with $h(G)$ vertices (which is a minor, $\mathcal{M}$, of the given graph $G$) and iteratively extends to the simple graph $G$. This conjecture has remained elusive, owing to a lack of understanding of the interdependence, particularly the importance of Lemma \ref{l1}, Lemma \ref{l2}, Lemma \ref{l3}, and Lemma \ref{l4} in Section 3.

\begin{REMARK}
The work in \cite{sri} inspired and motivated this work. The proofs of Theorem \ref{th2}, Theorem \ref{th1}, and Theorem \ref{th3} follow from the work in \cite{sri}.
\end{REMARK}
\begin{REMARK}
The reader must note that Lemma \ref{l1}, Lemma \ref{l2}, Lemma \ref{l3}, and Lemma \ref{l4} $($all of which are stated in Section 3$)$ are fundamental to the algebraic settings in this paper and are also essential for the validity of Claim \ref{cl2} $($which is stated in Section 3$)$. In other words, without the validity of Lemma \ref{l1}, Lemma \ref{l2}, Lemma \ref{l3}, and Lemma \ref{l4}, and the fact that the largest complete graph to which the given graph $G$ is contractible is $K_t$ $(t > 0)$ $($also, each simple graph $\mathcal{M}_{i}$ $(q \geq i \geq 1)$, defined in Section 2, has no $K_{t+1}$ minor$)$, Claim \ref{cl2} may face a serious existential crisis $($to the foundation of the algebraic approach$)$.
\end{REMARK}
 
The following is the paper's outline: Section 2 includes all necessary notations and definitions, while Section 3 includes polynomial settings, as well as statements and proofs for all claims and corollaries. The schematic representation in Figure \ref{fig:blockcv} provides a bird's-eye view of the paper’s presentation, and Figure \ref{fig:blockcvv} provides the comprehensive view of the relationships among polynomials that are defined in Section 3.

 	\begin{figure}\centering 
	\begin{tikzpicture}[node distance=1.5cm, auto]

	\node (A1) [process] {Lemma \ref{l1}\par};
	\node (B1) [compute, below of=A1] { Corollary \ref{cr1} \par};	
	\node (A2) [compute, right of=A1,xshift=2cm] { Lemma \ref{l2} \par};
	\node (A3) [compute, right of=A2,xshift=2cm] { Lemma \ref{l3}\par};
	\node (B2) [compute, below of=A3] { Corollary \ref{cr2} \par};	
	\node (A4) [compute, right of=A3,xshift=2cm] { Lemma \ref{l5}\par};
	\node (C1) [compute, right of=A4,xshift=2cm] { Lemma \ref{l6}\par};
	\node (A5) [compute, below of=B1,xshift=7cm, ,yshift=-1cm] { Lemma \ref{l4}\par};
	\node (A6) [compute, below of=A5] { Claim \ref{cl1}\par};
	\node (A7) [compute, below of=A6,text width=4cm,yshift=-2cm] {In order to prove that Claim \ref{cl2} is true (which will be proved in Corollary \ref{cor3}), we have to prove that Hypothesis \ref{h1} is not true (which will be proved in Corollary \ref{cor2}). In order to prove that Hypothesis \ref{h1} is not true, we prove Claim \ref{cl4} and Claim \ref{cl3} \par};
	\node (A9) [compute, below of=A7,xshift=-3cm,yshift=-2cm] { Corollary \ref{cor2}\par};
	\node (A10) [compute, right of=A9,xshift=1cm] { Corollary \ref{cor3}\par};
	\node (A11) [compute, right of=A10,xshift=1cm] { Corollary \ref{cor1}\par};
	\node (A12) [compute, right of=A11,xshift=1cm] { Corollary \ref{cor4}\par};
	\node (A8) [compute, below of=A11] { Corollary \ref{cor5}\par};

	\draw[arrow] (A1) -- (B1);
	\draw[arrow] (B1) -- (A5);
	\draw[arrow] (A2) -- (A5);		
	\draw[arrow] (A3) -- (B2);
	\draw[arrow] (A3) -- (B2);		
	\draw[arrow] (B2) -- (A5);		
	\draw[arrow] (A4) -- (A5);
	\draw[arrow] (C1) -- (A5);		
	\draw[arrow] (A5) -- (A6);		
	\draw[arrow] (A6) -- (A7);
	\draw[arrow] (A7) -- (A9);
	\draw[arrow] (A11) -- (A8);
	\draw[arrow] (A9) -- (A10);
	\draw[arrow] (A10) -- (A11);												\draw[arrow] (A11) -- (A12);												
	\end{tikzpicture}	
	\caption{The presentation in the paper is schematically depicted}
	\label{fig:blockcv}
\end{figure}
\begin{figure}
\begin{flushleft}
\fbox{\parbox[position]{14cm}{

\begin{itemize} 
\item The $t$-color set $\mathcal{K} = \{1, 2, \ldots, t-1, t\}$, prime number $p > 2\Delta^2n, \Delta$ is the maximum degree of the graph $G$ with $n$ vertices. For $q+1 \geq i \geq 0$, proving that a simple graph $\mathcal{M}_i$ is $t$-colorable is exactly equivalent to proving that  $\mathbf{P'}_i(\boldsymbol{v}) \not\equiv 0$ mod $p$,

\begin{equation*}
\mathbf{P}_i(\boldsymbol{v}) = \prod_{v_c \in V(\mathcal{M}_i)}\Big(\prod_{v_d \in N_{\mathcal{M}_i}(v_c)}(v_c - v_d)\prod_{l \in \mathbb{Z}_p\setminus \mathcal{K}}(v_c - l)\Big) \in \mathbb{Z}_p[\boldsymbol{v}].
\end{equation*}

\item The $t$-color set $\mathcal{K}' = \{1, 2, \ldots, t-1 \}\cup\{\beta\}$ ($\beta \in \mathbb{Z}_p\setminus\{0, 1, 2, \ldots, t-1, t\}$),
\begin{equation*}
\hat{\mathbf{P}}_i(\boldsymbol{v}) = \prod_{v_c \in V(\mathcal{M}_i)}\Big(\prod_{v_d \in N_{\mathcal{M}_i}(v_c)}(v_c - v_d)\prod_{l \in \mathbb{Z}_p\setminus \mathcal{K}'}(v_c - l)\Big) \in \mathbb{Z}_p[\boldsymbol{v}].
\end{equation*}

\item Interweaving relations among the polynomials $\mathbf{P}_{i-1}(\boldsymbol{v})$, $\hat{\mathbf{P}}_{i-1}(\boldsymbol{v})$, $\mathbf{S}_{i-1}(\boldsymbol{v})$, $\hat{\mathbf{S}}_{i-1}(\boldsymbol{v})$, $\mathbf{H}_{i-1}(\boldsymbol{v})$, $\mathbf{Q}_{i-1}(\boldsymbol{v})$, $\mathbf{G}(\boldsymbol{v})$, and $\mathbf{K}(\boldsymbol{v})$ defined in Section 3 are shown here, as well as a bird's eye view of the paper's presentation. WLOG, we assume that, $e = v_sv_b$ ($1 \leq s, b \leq n$), is the edge that is either contracted or deleted by an elementary operation $\mathcal{o}_i$ on the simple graph $\mathcal{M}_{i-1}$ to obtain the simple graph $\mathcal{M}_i$. In light of Remark \ref{rm2}, $V(\mathcal{M}_{i-1})= M_1\cup M_2\cup \{v_s\} = V(\mathcal{M}_{i-1}\setminus e)$.

\begin{align*}
\mathbf{P}_{i-1}(\boldsymbol{v}) & = \mathbf{H}_{i-1}(\boldsymbol{v})(v_s-v_b)(v_b-v_s) = \mathbf{S}_{i-1}(\boldsymbol{v})(v_b-v_s) = \mathbf{Q}_{i-1}(\boldsymbol{v})\prod_{l=t+1}^p(v_s-l)(v_b-v_s) \\ & = \frac{\mathbf{G}(\boldsymbol{v})}{\prod_{v_c \in M_1}(v_c-t)}\prod_{l=t+1}^p(v_s-l)(v_b-v_s)\\ \\ 
& = \frac{\prod_{v_c \in M_2}(v_c-\beta)}{\prod_{v_c \in M_1}(v_c-t)\prod_{v_c \in M_2}(v_c-t)}\Big(\frac{\mathbf{G}(\boldsymbol{v})}{\prod_{v_c \in M_2}(v_c-\beta)}\prod_{v_c \in M_2}(v_c-t)\Big)\prod_{l=t+1}^p(v_s-l)(v_b-v_s) \\ \\ 
& = \small{ \frac{\prod_{v_c \in M_2}(v_c-\beta)}{\prod_{v_c \in M_1}(v_c-t)\prod_{v_c \in M_2}(v_c- t)}\Big(\frac{\mathbf{G}(\boldsymbol{v})}{\prod_{v_c \in M_2}(v_c-\beta)}\prod_{v_c \in M_2}(v_c-t)\prod_{l \in \mathbb{Z}_p\setminus \mathcal{K}'}(v_s-l)\Big)\frac{\prod_{l=t+1}^p(v_s-l)(v_b-v_s)}{\prod_{l \in \mathbb{Z}_p\setminus \mathcal{K}'}(v_s-l)}} \\ \\ 
& \Bigg(  \prod_{v_c \in M_2}(v_c- \beta)\ \textnormal{divides}\ \mathbf{G'}(\boldsymbol{v})\ \textnormal{but}\ \prod_{v_c \in M_2}(v_c-\beta)^2\ \textnormal{does not divide}\ \mathbf{G'}(\boldsymbol{v}) \Bigg)\\ 
& \equiv \frac{\prod_{v_c \in M_2}(v_c-\beta)}{\prod_{v_c \in M_1}(v_c-t)\prod_{v_c \in M_2}(v_c-t)}\Big(\mathbf{K}(\boldsymbol{v})\Big)\frac{\prod_{l=t+1}^p(v_s-l)(v_b-v_s)}{\prod_{l \in \mathbb{Z}_p\setminus \mathcal{K}'}(v_s-l)} \\ \\ 
& \equiv \frac{\prod_{v_c \in M_2}(v_c-\beta)}{\prod_{v_c \in M_1}(v_c-t)\prod_{v_c \in M_2}(v_c-t)}\Big(\hat{\mathbf{S}}_{i-1}(\boldsymbol{v})\prod_{v_c \in M_1}(v_c-t)\Big)\frac{\prod_{l=t+1}^p(v_s-l)(v_b-v_s)}{\prod_{l \in \mathbb{Z}_p\setminus \mathcal{K}'}(v_s-l)} \\ \\
& \equiv \frac{\prod_{v_c \in M_2}(v_c-\beta)}{\prod_{v_c \in M_1}(v_c-t)\prod_{v_c \in M_2}(v_c-t)}\Big(\hat{\mathbf{P}}_{i-1}(\boldsymbol{v})\prod_{v_c \in M_1}(v_c-t)\Big)\frac{\prod_{l=t+1}^p(v_s-l)}{\prod_{l \in \mathbb{Z}_p\setminus \mathcal{K}'}(v_s-l)}.
\end{align*}

\item It is apparent from the above interwoven relations that just because there exists an $\boldsymbol{\alpha}$ $\alpha_{v_j} \in \mathcal{K}'$ $(1\leq j\leq n)$  such that $\hat{\mathbf{P}}_{i-1}(\boldsymbol{\alpha})\not\equiv 0$ mod $p$, it does not necessarily imply that there exists a $\boldsymbol{\beta}$ $\beta_{v_j} \in \mathcal{K}$ $(1\leq j\leq n)$  such that $\mathbf{P}_{i-1}(\boldsymbol{\beta})\not\equiv 0$ mod $p$.
 
\end{itemize}  }}
\end{flushleft}
\caption{The comprehensive view of the relationships among polynomials that are defined in Section 3.}
	\label{fig:blockcvv}

\end{figure}
 
\section{Preliminaries}
All graphs considered in this paper are simple graphs, that is, finite, connected and without loops and multiple edges. $K_t$ is a complete graph on $t$ vertices. The $t$-color set is denoted by $\mathcal{K}=\{1, 2, \ldots, t-1, t\}$.  Throughout this paper, a graph $\mathcal{M}$
is called a minor of a graph $G$, if $\mathcal{M}$ is the largest complete graph to which the graph $G$
is contractible by a sequence of applications of the following three elementary
operations: 1. Removal of a vertex. 2. Removal of an edge. 3. Contraction of an edge.\\

We only consider the following two elementary operations because removing a vertex is the same as removing all edges incident to that vertex and deleting the isolated vertex: 1. Removal of an edge. 2. Contraction of an edge. Most importantly, we assume that each elementary operation will remove or contract at most one edge, and that the resulting graph will be a simple graph (isolated vertex will be deleted). \\

Consider a graph $G = (V(G), E(G))$ with $n$ vertices, that is, $V(G) = \{v_1, v_2, \ldots, v_n\}$. Let $G_1$ and $G_2$ be simple graphs. If we say $G_1=G_2$, then $V(G_1)=V(G_2)$ and $E(G_1)=E(G_2)$.  Let $\mathcal{o}_i$ denote an elementary operation. Let $\mathcal{o}_1 < \mathcal{o}_2 < \ldots < \mathcal{o}_q < \mathcal{o}_{q+1}$ be a sequence of $q+1$  elementary operations performed on the graph $G$ to obtain the minor of the graph $G$, that is, $\mathcal{M}$. Let $\mathcal{M}_0 = G <  \mathcal{M}_1 < \mathcal{M}_2 < \ldots < \mathcal{M}_q < \mathcal{M} = \mathcal{M}_{q+1}$ denote a sequence of graphs obtained from $G$ by a sequence of elementary operations $\mathcal{o}_1 < \mathcal{o}_2 < \ldots < \mathcal{o}_q < \mathcal{o}_{q+1}$, note that each graph $\mathcal{M}_i$ ($q+1 \geq i \geq 1$) is a simple graph and is obtained by performing an elementary operation $\mathcal{o}_{i}$ on the simple graph $\mathcal{M}_{i-1}$, and the sequence of operations and graphs is ordered with respect to subscript. In other words, to obtain a simple graph $\mathcal{M}_i$ (which is finite, connected, and without loops and multiple edges), elementary operations are performed as follows: $|V(\mathcal{M}_{i-1})| = |V(\mathcal{M}_{i})| + 1$ if $\mathcal{o}_i$ is an edge contraction operation, and $|V(\mathcal{M}_{i-1})| = |V(\mathcal{M}_{i})|$ or $|V(\mathcal{M}_{i-1})| = |V(\mathcal{M}_{i})| + 1$ if $\mathcal{o}_i$ is an edge removal operation.\\

The neighbourhood of the vertex $v_c$, denoted by $N_G(v_c) =\{v_d:e=v_cv_d \in E(G)\}$, is the set of vertices adjacent to the vertex $v_c$ in $G$. We use the notation $G/e$ to represent edge contraction, which results in a simple graph, and $G\setminus{e}$ to represent edge deletion (see Figure \ref{fig:blockcv1} and Figure \ref{fig:blockcv2} for example), which results in a simple graph or union of two simple graphs. Suppose $e = v_sv_b$ is an edge that is contracted in the simple graph $G$. 
Without loss of generality, we choose the vertex $v_s$ of the graph $G$ for isolation and deletion. Then $G/e$ denotes a simple graph obtained after an edge contraction operation that deletes all edges incident to $v_s$ and the isolated vertex $v_s$ and adds new edges such that the vertex $v_b$ is adjacent to all the vertices in $N_G(v_s)\setminus\{\{N_G(v_s)\cap N_G(v_b)\}\cup\{v_b\}\}$ 
(see Figure \ref{fig:blockcv1} and Figure \ref{fig:blockcv3} for example). The maximum degree of a graph is denoted by $\Delta$. $G[A]$ denotes induced subgraph of the graph $G$, where $A \subseteq V(G)$ (see Figure \ref{fig:blockcv4}, Figure \ref{fig:blockcv5}, for example). And $G[[A]]_l$ denotes induced subgraph of the graph $G$ which is simple and degree of each vertex in the induced subgraph $G[[A]]_l$ is at most $l$, where $A \subseteq V(G)$ (see Figure \ref{fig:blockcv6}, Figure \ref{fig:blockcv7}, Figure \ref{fig:blockcv8}, Figure \ref{fig:blockcv9}, for example). $\mathcal{C}(G^l) = \{A \subseteq V(G): G[[A]]_l \}$ is the set of subsets of $V(G)$ such that $G[[A]]_l$, that is, each induced subgraph $G[[A]]_l$ of the graph $G$ is simple and degree of each vertex in the induced subgraph is at most $l$.\\

Let $p$ $(> 2\Delta^2n)$ be a prime number, and $\mathbb{Z}_p$ is a finite field. Let $\mathbf{F}(v_1, v_{2}, \ldots,  v_r)$ $\in \mathbb{Z}_p[v_1, v_2, \ldots, v_r]$ denote polynomial of $v_1, v_{2}, \ldots, v_r$ over $\mathbb{Z}_p$. Further, by Fermat's theorem, we know that $x^p \equiv x$ mod $p$.\\

Let $\mathbf{F'}(v_1, v_{2}, \ldots,  v_r)$  denote the polynomial obtained after applying the Fermat's theorem if exponent of variables is $\geq p$, that is $v_i^{p} \equiv v_i$ mod $p$ ( $1 \leq i \leq r$ ), in $\mathbf{F}(v_1, v_{2}, \ldots,  v_r)$. And, we can observe that the exponent of each $v_i$ ($1 \leq i \leq r$) in $\mathbf{F'}(v_1, v_{2}, \ldots,  v_r)$ is less than or equal to $p-1$. Further, $\mathbf{F}(v_1, v_{2}, \ldots,  v_r)$($\equiv 0$ mod $p$) is a zero polynomial ($\equiv 0$ mod $p$) if $\mathbf{F}(\gamma_{v_1}, \gamma_{v_{2}}, \ldots, \gamma_{v_r})$ $\equiv 0$ mod $p$ for all $\gamma_{v_1} \in \mathbb{Z}_p, \gamma_{v_{2}} \in \mathbb{Z}_p, \ldots, \gamma_{v_r} \in \mathbb{Z}_p$, in other words, $\mathbf{F}(v_1, v_{2}, \ldots,  v_r)$($\not\equiv 0$ mod $p$) is not a zero polynomial ($\not\equiv 0$ mod $p$) if there is a $r$-tuple $(\gamma_{v_1}, \gamma_{v_{2}}, \ldots, \gamma_{v_r})$ $\gamma_{v_i} \in \mathbb{Z}_p$ ($1 \leq i \leq r$), such that $\mathbf{F}(\gamma_{v_1}, \gamma_{v_{2}}, \ldots, \gamma_{v_r})$ $\not\equiv 0$ mod $p$ . And $\mathbf{F'}(v_1, v_{2}, \ldots,  v_r)$ is a zero polynomial ($\equiv 0$ mod $p$) if there is no non-zero coefficient monomial available after applying the Fermat's theorem if exponent of variables is $\geq p$, that is $v_i^{p} \equiv v_i$ mod $p$ ( $1 \leq i \leq r$ ), in $\mathbf{F}(v_1, v_{2}, \ldots,  v_r)$, in other words, $\mathbf{F'}(v_1, v_{2}, \ldots,  v_r)$ is $\not\equiv 0$ mod $p$ if there is a monomial whose coefficient is $\not\equiv 0$ mod $p$. For the sake of notational simplicity, let $\boldsymbol{v} =(v_1, v_2, \ldots , v_n)$ and $\boldsymbol{\alpha} = (\alpha_{v_1}, \alpha_{v_2}, \ldots, \alpha_{v_n})$.
\begin{REMARK}
	Let $\mathbf{F}(v) = v\prod_{\gamma \in \mathbb{Z}_p\setminus\{0\}}(v-\gamma) \equiv v(v^{p-1}-1) \equiv v^p-v$. Then $\mathbf{F'}(v) \equiv v -v \equiv 0$.
\end{REMARK}
\begin{figure}
\begin{multicols}{2}
{ \centering
 \begin{tikzpicture}[node distance={15mm}, thick, main/.style = {draw, circle}] 
\node[main] (1) {$v_2$}; 
\node[main] (2) [right of=1] {$v_1$};
\node[main] (3) [below left of=1] {$v_3$}; 
\node[main] (4) [left of= 1] {$v_4$}; 
\node[main] (5) [above left of=1] {$v_5$}; 
\node[main] (6) [below right of=2] {$v_6$};  
\node[main] (7) [right of=2] {$v_7$}; 
\node[main] (8) [above right of=2] {$v_8$}; 

\draw (1) -- (2); 
\draw (1) -- (3); 
\draw (1) -- (4); 
\draw (1) -- (5); 
\draw (2) -- (6); 
\draw (2) -- (7); 
\draw (2) -- (8); 
\draw (1) -- (8); 
\draw (3) -- (6);
\end{tikzpicture}
 \captionof{figure}{$H$ is a simple graph.}
 \label{fig:blockcv1}
}
\columnbreak
\begin{flushleft}
$N_H(v_2) = \{v_1, v_3, v_4, v_5, v_8\}$,\\

$N_H(v_2)$$\setminus$$\{\{$$N_H(v_2)$$\cap$$ N_H(v_1)$$\}$$\cup$$\{$$v_1$$\}\}$=$\{v_3, v_4, v_5\}$,\\

$N_{H\setminus e}(v_2)\ or\ N_{H\setminus\{e=v_2v_1\}}(v_2)$=$\{v_3, v_4, v_5, v_8\}$,\\

$B_1$=$\{v_1, v_2, v_4, v_7\}$, $B_2$=$\{v_3, v_5, v_6, v_8\}$, $B_3$=$\{v_1, v_2, v_3, v_6, v_8\}$, $B_4$=$\{v_1, v_2, v_4, v_6,v_7\}$.\\ 

$B_1, B_2, B_3, B_4 \subset$ $V(H)$.
\end{flushleft}
\end{multicols}

\end{figure}

\begin{figure}
\begin{minipage}[t]{0.475\textwidth}
\begin{tikzpicture}[node distance={15mm}, thick, main/.style = {draw, circle}] 
\node[main] (1) {$v_2$}; 
\node[main] (2) [right of=1] {$v_1$};
\node[main] (3) [below left of=1] {$v_3$}; 
\node[main] (4) [left of= 1] {$v_4$}; 
\node[main] (5) [above left of=1] {$v_5$}; 
\node[main] (6) [below right of=2] {$v_6$};  
\node[main] (7) [right of=2] {$v_7$}; 
\node[main] (8) [above right of=2] {$v_8$};

\draw (1) -- (3); 
\draw (1) -- (4); 
\draw (1) -- (5); 
\draw (2) -- (6); 
\draw (2) -- (7); 
\draw (2) -- (8); 
\draw (1) -- (8); 
\draw (3) -- (6);
\end{tikzpicture}
\caption{$H$$\setminus$$e$\ \textnormal{or}\ $H$$\setminus$ $\{$$e$$=$$v_2v_1$$\}$} 
\label{fig:blockcv2}
\end{minipage}
\hfill
\begin{minipage}[t]{0.475\textwidth}
\begin{tikzpicture}[node distance={15mm}, thick, main/.style = {draw, circle}] 

\node[main] (2) [right of=1] {$v_1$};
\node[main] (3) [below left of=1] {$v_3$}; 
\node[main] (4) [left of= 1] {$v_4$}; 
\node[main] (5) [above left of=1] {$v_5$}; 
\node[main] (6) [below right of=2] {$v_6$};  
\node[main] (7) [right of=2] {$v_7$}; 
\node[main] (8) [above right of=2] {$v_8$};

\draw (2) -- (3); 
\draw (2) -- (4); 
\draw (2) -- (5); 
\draw (2) -- (6); 
\draw (2) -- (7); 
\draw (2) -- (8); 
 \draw (3) -- (6);
\end{tikzpicture}
\caption{\textnormal{WLOG, we choose the vertex $v_2$ for isolation and deletion.}\\$H/e\ \textnormal{or}\ H/\{e=v_2v_1\}$.}
\label{fig:blockcv3}
\end{minipage}
\end{figure}

\begin{figure}
\begin{minipage}[t]{0.475\textwidth}
\begin{tikzpicture}[node distance={15mm}, thick, main/.style = {draw, circle}] 
\node[main] (1) {$v_2$}; 
\node[main] (2) [right of=1] {$v_1$};

\node[main] (4) [left of= 1] {$v_4$};

\node[main] (7) [right of=2] {$v_7$};

\draw (1) -- (2); 

\draw (1) -- (4);

\draw (2) -- (7);

\end{tikzpicture}
\caption{$H[B_1]$} 
\label{fig:blockcv4}
\end{minipage}
\hfill
\begin{minipage}[t]{0.475\textwidth}
\begin{tikzpicture}[node distance={15mm}, thick, main/.style = {draw, circle}]

\node[main] (3) [below left of=1] {$v_3$}; 

\node[main] (5) [above left of=1] {$v_5$}; 
\node[main] (6) [below right of=2] {$v_6$};  

\node[main] (8) [above right of=2] {$v_8$};

\draw (3) -- (6);
\end{tikzpicture}
\caption{$H[B_2]$} 
\label{fig:blockcv5}
\end{minipage}
\hfill
\begin{minipage}[t]{0.475\textwidth}
\begin{tikzpicture}[node distance={15mm}, thick, main/.style = {draw, circle}] 
\node[main] (1) {$v_2$}; 
\node[main] (2) [right of=1] {$v_1$};
\node[main] (3) [below left of=1] {$v_3$};

\node[main] (6) [below right of=2] {$v_6$};  

\node[main] (8) [above right of=2] {$v_8$}; 

\draw (1) -- (2); 
\draw (1) -- (3);

\draw (2) -- (6); 

\draw (2) -- (8); 
\draw (1) -- (8); 
\draw (3) -- (6);
\end{tikzpicture}
\caption{$H[[B_3]]_3$}
\label{fig:blockcv6}
\end{minipage}
\hfill
\begin{minipage}[t]{0.475\textwidth}
\begin{tikzpicture}[node distance={15mm}, thick, main/.style = {draw, circle}] 
\node[main] (1) {$v_2$}; 
\node[main] (2) [right of=1] {$v_1$};
\node[main] (4) [left of= 1] {$v_4$};  
\node[main] (6) [below right of=2] {$v_6$};
\node[main] (7) [right of=2] {$v_7$}; 

\draw (1) -- (2); 
\draw (1) -- (4); 
\draw (2) -- (7); 
\draw (2) -- (6);
\end{tikzpicture}
\caption{$H[[B_4]]_3$} 
\label{fig:blockcv7}
\end{minipage}
\hfill
\begin{minipage}[t]{0.475\textwidth}
\begin{tikzpicture}[node distance={15mm}, thick, main/.style = {draw, circle}] 
\node[main] (1) {$v_2$}; 
\node[main] (2) [right of=1] {$v_1$};
\node[main] (3) [below left of=1] {$v_3$}; 
\node[main] (6) [below right of=2] {$v_6$};  
\node[main] (8) [above right of=2] {$v_8$};

\draw (1) -- (3); 
\draw (2) -- (6); 
\draw (2) -- (8); 
\draw (1) -- (8); 
\draw (3) -- (6);
\end{tikzpicture}
\caption{$H[[B_3]]_2$}
\label{fig:blockcv8}
\end{minipage}
\hfill
\begin{minipage}[t]{0.475\textwidth}
\begin{tikzpicture}[node distance={15mm}, thick, main/.style = {draw, circle}] 
\node[main] (1) {$v_2$}; 
\node[main] (2) [right of=1] {$v_1$};
\node[main] (3) [below left of=1] {$v_3$}; 
\node[main] (6) [below right of=2] {$v_6$};  
\node[main] (8) [above right of=2] {$v_8$};

\draw (1) -- (3); 
\draw (2) -- (1); 
\draw (2) -- (8); 
\draw (3) -- (6);
\end{tikzpicture}
\caption{$H[[B_3]]_2$}
\label{fig:blockcv9}
\end{minipage}
\end{figure}

\section{Algebraic settings and results}
We know that a given graph $G$ with $n$ vertices has no $K_{t+1}$ ($t > 0$) minor, and that minor $\mathcal{M}$ (= $\mathcal{M}_{q+1}$) is the complete graph with $t$ vertices, or $h(G) = t$. Since $\mathcal{M}$ is the complete graph, it is a fact that $t$ vertices of $\mathcal{M}$ can be coloured using $t$-colors. To prove Hadwiger's conjecture, we must show that the $n$ vertices of the given graph $G$ can also be coloured using $t$-colors.\\

We know that $\mathcal{M}_i$ ($q+1 \geq i \geq 1$) is a simple graph, and that it can be obtained by performing an elementary operation $\mathcal{o}_{i}$ on $\mathcal{M}_{i-1}$. So, given a graph $G$, $\mathcal{M}_0 = G <  \mathcal{M}_1 < \mathcal{M}_2 < \ldots < \mathcal{M}_q < \mathcal{M} = \mathcal{M}_{q+1}$ is a sequence of graphs constructed from $G$ using a sequence of elementary operations $\mathcal{o}_1 < \mathcal{o}_2 < \ldots < \mathcal{o}_q < \mathcal{o}_{q+1}$.\\

We now define the algebraic settings for colouring the $n$ vertices of the given graph $G$ with $t$-colors as follows ($p > 2\Delta^2n, \Delta$ is the maximum degree of the graph $G$):\\

For $q+1 \geq i \geq 0$,
\begin{equation}
\label{e1}
\mathbf{P}_i(\boldsymbol{v}) = \prod_{v_c \in V(\mathcal{M}_i)}\Big(\prod_{v_d \in N_{\mathcal{M}_i}(v_c)}(v_c - v_d)\prod_{l = t+1}^p(v_c - l)\Big) \in \mathbb{Z}_p[\boldsymbol{v}].
\end{equation}
We notice that proving that the $n$ vertices of a given graph $G$ can be coloured with $t$-colors is exactly equivalent to proving that there exists an $n$-tuple $\boldsymbol{\alpha}$ $\alpha_{v_j} \in \mathcal{K}$ $(1\leq j\leq n)$ such that $\mathbf{P}_0(\boldsymbol{\alpha})\not\equiv 0$ mod $p$. And the vertex colouring of the given graph G with t-colors is defined by the mapping $\ell(v_j)=\alpha_{v_j}$  $(1 \leq j \leq n)$.\\

\begin{REMARK}
	Suppose that there 
	exists an $n$-tuple $\boldsymbol{\alpha}$ such that $\mathbf{P}_0(\boldsymbol{\alpha})\not\equiv 0$ mod $p$. The products $\prod_{v_c \in V(\mathcal{M}_0)}\big(\prod_{l = t+1}^p(v_c - l))$ in $\mathbf{P}_0(\boldsymbol{v})$ then guarantee that $\alpha_{v_j} \in \mathcal{K}$ $(1\leq j\leq n)$.
\end{REMARK}
Now we concentrate on proving the following lemmas, which are fundamental to the algebraic settings in this paper and also necessary for the proof of Claim \ref{cl2} to be valid. In other words, without the validity of Lemmas \ref{l1}, \ref{l2}, \ref{l3} and \ref{l4}, as well as the fact that the largest complete graph to which the given graph $G$ is contractible is $K_t$ ($t > 0$), Claim \ref{cl2} may face an existential crisis (to the foundation of the algebraic approach).

\begin{LEMMA}
	\label{l1}
In a simple graph $H$, let $e = v_sv_b$ be a contracted edge. Suppose $H/e$ is $t$-colorable. The $t$-colors can then be used to colour $H\setminus{e}$.	
\end{LEMMA}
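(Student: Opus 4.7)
The plan is to construct a valid $t$-coloring of $H\setminus e$ by extending the given $t$-coloring of $H/e$ in the most natural way: assign $v_s$ the same color that $v_b$ received in $H/e$. Concretely, let $c : V(H/e) \to \mathcal{K}$ be the assumed proper $t$-coloring. Since $V(H\setminus e) = V(H/e) \cup \{v_s\}$ (the isolated-vertex convention changes nothing, as it can only shrink the vertex sets), I would define $c' : V(H\setminus e) \to \mathcal{K}$ by $c'(v_s) := c(v_b)$ and $c'(v) := c(v)$ for every other vertex $v$.

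Verifying that $c'$ is a proper coloring is a short case analysis on the edges of $H\setminus e$. For an edge $uw \in E(H\setminus e)$ with neither endpoint equal to $v_s$, the same edge is present in $H/e$ (contraction only alters edges incident to $v_s$), so $c(u)\neq c(w)$ forces $c'(u)\neq c'(w)$. For an edge $v_su$ of $H\setminus e$, the neighbor $u$ lies in $N_H(v_s)\setminus\{v_b\}$; by the construction of $H/e$ recalled in Section~2, every such $u$ is a neighbor of $v_b$ in $H/e$, so $c(v_b)\neq c(u)$, i.e.\ $c'(v_s)\neq c'(u)$. For an edge $v_bw$ of $H\setminus e$ (with $w\neq v_s$), the same edge belongs to $H/e$, so $c(v_b)\neq c(w)$ and therefore $c'(v_b)\neq c'(w)$. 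Crucially, the edge $v_sv_b$ does not belong to $H\setminus e$, so it is harmless that $c'(v_s) = c'(v_b)$.

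The key structural observation driving the whole argument is the containment $N_H(v_s)\setminus\{v_b\}\subseteq N_{H/e}(v_b)$, which is exactly the content of the contraction rule stated in Section~2: contraction only enlarges $v_b$'s neighborhood, forcing $c(v_b)$ to already differ from the colors of all former neighbors of $v_s$. That is precisely what makes $c(v_b)$ a legal color for $v_s$ once the edge $e$ has been deleted.

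No substantive obstacle arises; the lemma is essentially a restatement of the standard fact that colorability transfers from a contracted graph back to the corresponding edge-deletion. The only bookkeeping concern is the paper's convention that isolated vertices may be deleted, but this at most shrinks $V(H\setminus e)$ and never invalidates the assignment $c'$ described above.
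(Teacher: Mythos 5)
Your proof is correct and takes essentially the same approach as the paper: assign $v_s$ the color of $v_b$ from the coloring of $H/e$ and keep every other vertex's color. The paper states this more tersely, while you spell out the edge-by-edge verification and the key containment $N_H(v_s)\setminus\{v_b\}\subseteq N_{H/e}(v_b)$, but the underlying argument is identical.
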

\begin{proof}
In the simple graph $H$, let $e = v_sv_b$ be a contracted edge and $V(H) = \{v_1, v_2, \ldots, v_s,v_b, \ldots, v_h\}$ be the vertex set. We also know that $H/e$ is a simple graph obtained by deleting all edges incident to $v_s$ and the isolated vertex $v_s$ and adding new edges such that the vertex $v_b$ is adjacent to all vertices in $N_H(v_s)\setminus\{\{N_H(v_s)\cap N_H(v_b)\}\cup\{v_b\}\}$, without loss of generality we chose $v_s$ for isolation and deletion, and vertex set $V(H/e) = V(H)\setminus\{v_s\}$. The vertices in $H/e$ are coloured in the following way because the simple graph $H/e$ is $t$-colorable: $\{\ell(v_j): v_j \in V(H/e)\}$. Except for the vertex $v_s$, all of the vertices in $H\setminus{e}$  (it is a simple graph or union of two simple graphs) are coloured with the same colors $\{\ell(v_j): v_j \in V(H/e)\}$ defined for the simple graph $H/e$. As a result, colouring the vertex $v_s$ is sufficient in $H\setminus{e}$, and we colour the vertex $v_s$ with the same colour as the vertex $v_b$ in $H/e$, $\ell(v_s) = \ell(v_b)$. As a result, $H\setminus{e}$ is $t$-colorable.
\end{proof}
\begin{COROLLARY}
	\label{cr1}
In a simple graph $H$, let $e = v_sv_b$ be a contracted edge. Suppose $H/e$ is $t$-colorable. The algebraic setting of the simple graph $($or union of two simple graphs$)$ $H\setminus{e}$ with vertex set $V(H\setminus{e})=\{v_1, v_2, \ldots, v_s,v_b, \ldots, v_h\}$ is the polynomial	
\begin{equation*}
\mathbf{H}(v_1, v_2, \ldots, v_h) = \prod_{v_c \in V(H\setminus{e})}\Big(\prod_{v_d \in N_{H\setminus{e}}(v_c)}(v_c - v_d)\prod_{l = t+1}^p(v_c - l)\Big).
\end{equation*}
Then there exists an $h$-tuple $(\alpha_{v_1}, \alpha_{v_2}, \ldots, \alpha_{v_h})$ $\alpha_{v_j} \in \mathcal{K}$ $(1\leq j\leq h)$ such that $\mathbf{H}(\alpha_{v_1}, \alpha_{v_2}, \ldots, \alpha_{v_h})\not\equiv 0$ mod $p$.	
\end{COROLLARY}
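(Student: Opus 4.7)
The plan is to transfer the combinatorial $t$-coloring produced in Lemma \ref{l1} directly into an evaluation point that makes the polynomial $\mathbf{H}$ nonvanishing modulo $p$. Since Lemma \ref{l1} already constructs an explicit proper $t$-coloring $\ell'$ of $H\setminus e$ from the hypothesised $t$-coloring $\ell$ of $H/e$ (namely $\ell'(v_j) = \ell(v_j)$ for $v_j \neq v_s$ and $\ell'(v_s) = \ell(v_b)$), the whole task reduces to verifying that plugging this coloring into $\mathbf{H}$ produces a nonzero element of $\mathbb{Z}_p$.

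Concretely, I would set $\alpha_{v_j} = \ell'(v_j) \in \mathcal{K}$ for every $j$ with $1 \leq j \leq h$, and then examine each factor in the product defining $\mathbf{H}(\alpha_{v_1}, \alpha_{v_2}, \ldots, \alpha_{v_h})$. For a factor of the form $(\alpha_{v_c} - \alpha_{v_d})$ with $v_d \in N_{H\setminus e}(v_c)$, properness of $\ell'$ gives $\alpha_{v_c} \neq \alpha_{v_d}$ as integers in $\{1,2,\ldots,t\}$, and since $p > 2\Delta^2 n > t$, these two values remain distinct modulo $p$, so the factor is nonzero in $\mathbb{Z}_p$. For a factor $(\alpha_{v_c} - l)$ with $l \in \{t+1, t+2, \ldots, p\}$, the value $\alpha_{v_c}$ lies in $\{1, \ldots, t\}$ while $l$ ranges over the complementary residues in $\mathbb{Z}_p$, so each such factor is likewise nonzero.

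Since $\mathbb{Z}_p$ is a field, a product of nonzero elements is nonzero, and therefore $\mathbf{H}(\alpha_{v_1}, \alpha_{v_2}, \ldots, \alpha_{v_h}) \not\equiv 0 \pmod{p}$, which is exactly the desired conclusion. There is no genuine obstacle here: the corollary is a routine algebraic reformulation of Lemma \ref{l1}, whose only content beyond the lemma itself is the observation that a proper $t$-coloring using colors from $\mathcal{K}$ simultaneously makes both kinds of factors (the edge-difference factors and the colour-restriction factors) nonvanishing in $\mathbb{Z}_p$, and that the bound $p > 2\Delta^2 n$ comfortably prevents any accidental cancellation among the small integers involved.
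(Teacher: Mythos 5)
Your proof is correct and follows essentially the same route as the paper: invoke Lemma \ref{l1} to obtain a proper $t$-coloring of $H\setminus e$, interpret it as an $h$-tuple over $\mathcal{K}$, and observe that every factor of $\mathbf{H}$ evaluates to a nonzero element of $\mathbb{Z}_p$. The paper states this in one sentence; you have merely spelled out the factor-by-factor verification that the paper leaves implicit.
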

\begin{proof}
Because the simple graph $H\setminus{e}$ is $t$-colorable, as shown by Lemma \ref{l1}, there exists an $h$-tuple $(\alpha_{v_1}, \alpha_{v_2}, \ldots, \alpha_{v_h})$ $\alpha_{v_j} \in \mathcal{K}$ $(1\leq j\leq h)$ such that $\mathbf{H}(\alpha_{v_1}, \alpha_{v_2}, \ldots, \alpha_{v_h})\not\equiv 0$ mod $p$
\end{proof}

\begin{LEMMA}
	\label{l2}
Let $e = v_sv_b$ be an edge that is deleted from a simple graph $H$, resulting in $|V(H\setminus{e})|=|V(H)|$. Suppose that $H\setminus e$ is t-colorable. The algebraic setting of the simple graph $($or union of two simple graphs$)$ $H\setminus{e}$ with vertex set $V(H\setminus{e})=\{v_1, v_2, \ldots, v_s,v_b, \ldots, v_h\}$ is the polynomial	
\begin{equation*}
\mathbf{H}(v_1, v_2, \ldots, v_h) = \prod_{v_c \in V(H\setminus{e})}\Big(\prod_{v_d \in N_{H\setminus{e}}(v_c)}(v_c - v_d)\prod_{l = t+1}^p(v_c - l)\Big).
\end{equation*}
Then there exists an $h$-tuple $(\alpha_{v_1}, \alpha_{v_2}, \ldots, \alpha_{v_h})$ $\alpha_{v_j} \in \mathcal{K}$ $(1\leq j\leq h)$ such that $\mathbf{H}(\alpha_{v_1}, \alpha_{v_2}, \ldots, \alpha_{v_h})\not\equiv 0$ mod $p$.
\end{LEMMA}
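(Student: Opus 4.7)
The plan is to show that a proper $t$-coloring of $H \setminus e$, which exists by hypothesis, translates directly into an $h$-tuple $(\alpha_{v_1}, \ldots, \alpha_{v_h}) \in \mathcal{K}^h$ at which $\mathbf{H}$ does not vanish modulo $p$. This is essentially the same argument as in Corollary \ref{cr1}, but invoked directly (since here $H \setminus e$ is $t$-colorable by hypothesis rather than as a consequence of Lemma \ref{l1}).

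First, since $H \setminus e$ is $t$-colorable, I would fix a proper vertex coloring $\ell : V(H \setminus e) \to \mathcal{K} = \{1, 2, \ldots, t\}$ and set $\alpha_{v_j} := \ell(v_j)$ for each $1 \leq j \leq h$. Since $|V(H \setminus e)| = |V(H)| = h$ (the edge deletion does not isolate either endpoint by hypothesis), this yields a well-defined $h$-tuple with every coordinate lying in $\mathcal{K}$.

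Next, I would verify factor by factor that each term in $\mathbf{H}(\alpha_{v_1}, \ldots, \alpha_{v_h})$ is nonzero modulo $p$. For any factor of the form $(v_c - v_d)$ with $v_d \in N_{H \setminus e}(v_c)$, properness of $\ell$ gives $\ell(v_c) \neq \ell(v_d)$; since both values lie in $\{1, \ldots, t\}$ and $p > 2\Delta^2 n > t$, the difference $\alpha_{v_c} - \alpha_{v_d}$ is a nonzero element of $\mathbb{Z}_p$. For any factor of the form $(v_c - l)$ with $l \in \{t+1, \ldots, p\}$, we have $\alpha_{v_c} \in \{1, \ldots, t\}$, and the residues $\{1, \ldots, t\}$ and $\{t+1, t+2, \ldots, p-1, 0\}$ partition $\mathbb{Z}_p$, so $\alpha_{v_c} - l \not\equiv 0 \bmod p$.

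Since $\mathbb{Z}_p$ is a field and every factor of the product defining $\mathbf{H}$ is nonzero at the chosen $h$-tuple, the product itself is nonzero modulo $p$, yielding $\mathbf{H}(\alpha_{v_1}, \ldots, \alpha_{v_h}) \not\equiv 0 \bmod p$. There is no real obstacle here: the lemma is simply the algebraic restatement of the combinatorial hypothesis of $t$-colorability of $H \setminus e$, and the only thing to check is that the encoding polynomial $\mathbf{H}$ faithfully captures both constraints of a proper $t$-coloring (colors drawn from $\mathcal{K}$, adjacent vertices receiving distinct colors), which it does by construction.
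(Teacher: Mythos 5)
Your proof is correct and takes the same approach as the paper; the paper's own proof is just the one-line observation "This follows from the fact that $H\setminus{e}$ is $t$-colorable," and you have simply spelled out the routine verification (coloring gives the tuple, each factor is nonzero in $\mathbb{Z}_p$) that the paper leaves implicit.
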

\begin{proof}
	This follows from the fact that $H\setminus{e}$ is $t$-colorable.
\end{proof}
\begin{LEMMA}
	\label{l3}
	Let $e = v_sv_b$ be an edge that is deleted from a simple graph $H$, resulting in $|V(H\setminus{e})| + 1 = |V(H)|$. Suppose that $H\setminus{e}$ is $t$-colorable $($isolated vertex will be deleted$)$. The t-colors can then be used to colour $H$.	.
\end{LEMMA}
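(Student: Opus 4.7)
The plan is to extract the structural content of the hypothesis $|V(H\setminus e)|+1=|V(H)|$. Under the convention adopted in Section~2, an edge deletion decreases the vertex count only when one endpoint becomes isolated and is then removed. Without loss of generality, I would take $v_s$ to be that endpoint, which is equivalent to saying $e$ is the unique edge of $H$ incident to $v_s$; that is, $N_H(v_s)=\{v_b\}$. With this observation in hand, the lemma reduces to a one-vertex extension of a known coloring.

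Next, I would take a proper $t$-coloring $\ell$ of $H\setminus e$, which exists by hypothesis. Because $V(H\setminus e)=V(H)\setminus\{v_s\}$ and $E(H\setminus e)=E(H)\setminus\{e\}$, the map $\ell$ already properly colors every vertex of $H$ other than $v_s$ and properly handles every edge of $H$ other than $e$ itself. I would then extend $\ell$ to $v_s$ by assigning $\ell(v_s)\in \mathcal{K}\setminus\{\ell(v_b)\}$. This set is non-empty because the presence of the edge $e$ in $H$ forces $t\geq 2$, and the only new adjacency constraint introduced by this extension concerns $e=v_sv_b$, which is satisfied by construction since $\ell(v_s)\neq \ell(v_b)$.

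Consequently $\ell$ is a proper $t$-coloring of $H$, which is what the lemma asserts. There is no substantive obstacle in this argument; the only point worth emphasising is that the vertex-count drop $|V(H\setminus e)|+1=|V(H)|$ is precisely what pins down one endpoint of $e$ as a degree-one vertex in $H$, after which the conclusion follows from a single colour choice in $\mathcal{K}\setminus\{\ell(v_b)\}$.
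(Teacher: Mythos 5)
Your argument is essentially the same as the paper's: both identify that the vertex-count drop forces $v_s$ to be the unique degree-one endpoint of $e$, and both then extend the given $t$-coloring of $H\setminus e$ to $v_s$ by avoiding $\ell(v_b)$. Your write-up is a bit more explicit (spelling out the color set $\mathcal{K}\setminus\{\ell(v_b)\}$ and noting $t\geq 2$, though the latter follows more precisely from $v_b$ retaining an incident edge in $H\setminus e$ than from the mere presence of $e$ in $H$), but the underlying idea is identical.
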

\begin{proof}
We already know that $|V(H)|=|V(H\setminus{e})|+1$. We assume that $V(H\setminus{e})=V(H) \setminus \{v_s\}$ without losing generality. Because the graph $H\setminus{e}$ is $t$-colorable, we can see that all of the vertices in the graph $H$ are coloured with $t$-colors except the vertex $v_s$; colouring the vertex $v_s$ is sufficient to make $H$ $t$-colorable. We can always colour the vertex $v_s$ such that $H$ is $t$-colorable because the degree of the vertex $v_s$ is one.
\end{proof}
\begin{COROLLARY}
	\label{cr2}
	In a simple graph $H$, let $e = v_sv_b$ is an edge that is removed such that  $|V(H)|=|V(H\setminus{e})|+1$. Suppose graph $H\setminus{e}$ is $t$-colorable $($isolated vertex will be deleted$)$. 
	The algebraic setting of the simple graph $H$ with vertex set $V(H)=\{v_1, v_2, \ldots, v_s,v_b, \ldots, v_h\}$ is the polynomial
	\begin{equation*}
	\mathbf{H}(v_1, v_2, \ldots, v_h) = \prod_{v_c \in V(H)}\Big(\prod_{v_d \in N_{H}(v_c)}(v_c - v_d)\prod_{l = t+1}^p(v_c - l)\Big).
	\end{equation*}
	Then there exists an $h$-tuple $(\alpha_{v_1}, \alpha_{v_2}, \ldots, \alpha_{v_h})$ $\alpha_{v_j} \in \mathcal{K}$ $(1\leq j  \leq h)$ such that $\mathbf{H}(\alpha_{v_1}, \alpha_{v_2}, \ldots, \alpha_{v_h})\not\equiv 0$ mod $p$.	
\end{COROLLARY}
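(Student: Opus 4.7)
The plan is to derive this corollary as essentially an immediate translation of Lemma \ref{l3} into the polynomial language, in the same way that Corollary \ref{cr1} follows from Lemma \ref{l1}. The key observation is that Lemma \ref{l3} already gives us a proper $t$-coloring of $H$; all that remains is to verify that the values of such a coloring, viewed as elements of $\mathbb{Z}_p$, make every factor of $\mathbf{H}$ nonzero modulo $p$.

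First, I would invoke Lemma \ref{l3} with the same edge $e = v_sv_b$ and the same hypothesis that $H\setminus e$ is $t$-colorable (with the isolated vertex, WLOG $v_s$, deleted). This yields a proper vertex coloring $\ell: V(H) \to \mathcal{K}$ of the full graph $H$. Concretely, one extends the given $t$-coloring of $H\setminus e$ to $v_s$ by choosing any color in $\mathcal{K} \setminus \{\ell(v_b)\}$, which is possible because $v_s$ has degree one in $H$ and $|\mathcal{K}| = t \geq 2$ (the case $t=1$ is degenerate since a graph with an edge is never $1$-colorable).

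Next, I would set $\alpha_{v_j} := \ell(v_j) \in \mathcal{K} \subseteq \{1, 2, \ldots, t\} \subset \mathbb{Z}_p$ for each $j$, and verify that $\mathbf{H}(\alpha_{v_1}, \ldots, \alpha_{v_h}) \not\equiv 0 \pmod{p}$ factor by factor. For any edge $v_cv_d \in E(H)$, the properness of $\ell$ gives $\alpha_{v_c} \neq \alpha_{v_d}$ as integers in $\{1,\ldots,t\}$; since $p > 2\Delta^2 n > 2t$, these integers remain distinct modulo $p$, so $(\alpha_{v_c} - \alpha_{v_d}) \not\equiv 0 \pmod{p}$. For any factor of the form $(v_c - l)$ with $l \in \{t+1, \ldots, p\}$, we have $\alpha_{v_c} \in \{1, \ldots, t\}$ and $l \geq t+1$, with both representatives lying in the canonical range $\{0, 1, \ldots, p-1\}$, so $(\alpha_{v_c} - l) \not\equiv 0 \pmod{p}$. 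Since $\mathbb{Z}_p$ is a field, the product of these nonzero factors is itself nonzero modulo $p$.

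There is no real obstacle in this corollary: the combinatorial content is absorbed entirely by Lemma \ref{l3}, and the polynomial-evaluation step reduces to the trivial observation that distinct elements of $\mathcal{K} \cup \{t+1, \ldots, p\}$ remain distinct in $\mathbb{Z}_p$, which is guaranteed by the standing choice $p > 2\Delta^2 n$. The only place one might stumble is in reconciling the vertex sets: $V(H) = V(H\setminus e) \cup \{v_s\}$, and the algebraic setting of $\mathbf{H}$ in this corollary is stated over $V(H)$ rather than $V(H\setminus e)$, which is precisely why the corollary is distinct from a mere restatement of Lemma \ref{l2}.
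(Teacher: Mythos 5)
Your proof is correct and follows the same route as the paper: both invoke Lemma \ref{l3} to conclude that $H$ itself is $t$-colorable and then read off a nonzero evaluation of $\mathbf{H}$. You simply spell out the factor-by-factor verification (distinct colors in $\mathcal{K}$ give nonzero edge factors, colors lie outside $\{t+1,\ldots,p\}$, and $p$ is large enough) that the paper's one-line proof leaves implicit.
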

\begin{proof}
The existence of an $h$-tuple $(\alpha_{v_1}, \alpha_{v_2}, \ldots, \alpha_{v_h})$ $\alpha_{v_j} \in \mathcal{K}$ $(1\leq j  \leq h)$ such that $\mathbf{H}(\alpha_{v_1}, \alpha_{v_2}, \ldots, \alpha_{v_h})\not\equiv 0$ mod $p$ follows from Lemma \ref{l3}, which shows that the simple graph H is t-colorable.
\end{proof}

Although the following two lemmas, Lemma \ref{l5} and Lemma \ref{l6}, are based on the fact that the largest complete graph to which the given graph $G$ is contractible is $K_t$ ($t > 0$), their importance cannot be ignored.
 
\begin{LEMMA}
	\label{l5}
Let $\mathcal{Y} = \{Y : Y \subset V(\mathcal{M}_{i-1})$ and $|Y|=t+1\}$. And the polynomial $\mathbf{Y}_{i-1}(v_{y_1}, v_{y_2}, \ldots, v_{y_t}, v_{y_{t+1}})$ is defined as follows, given $Y = \{v_{y_1}, v_{y_2}, \ldots, v_{y_t}, v_{y_{t+1}}\} \in \mathcal{Y}$, $1 \leq y_1, y_2, \ldots, y_t, y_{t+1} \leq n$ $:$ 
$$\mathbf{Y}_{i-1}(v_{y_1}, v_{y_2}, \ldots, v_{y_t}, v_{y_{t+1}}) = \prod_{\substack{1 \leq c < d \leq t+1\\ \text{and}\ e = v_{y_c}v_{y_d} \in E(\mathcal{M}_{i-1})}}(v_{y_c} - v_{y_d})\prod_{v_{y_c} \in Y}\big(\prod_{l = t+1}^p(v_{y_c} - l)\big).$$

Then, given $Y = \{v_{y_1}, v_{y_2}, \ldots, v_{y_t}, v_{y_{t+1}}\} \in \mathcal{Y}$, $\mathbf{Y'}_{i-1}(v_{y_1}, v_{y_2}, \ldots, v_{y_t}, v_{y_{t+1}}) \not\equiv 0$ mod $p$, that is, there exists a  $(\alpha_{v_{y_1}}, \alpha_{v_{y_2}}, \ldots, \alpha_{v_{y_{t+1}}})$ $\alpha_{v_{y_j}} \in \mathcal{K}$ $(1\leq j \leq t+1)$ such that $\mathbf{Y'}_{i-1}(\alpha_{v_{y_1}}, \alpha_{v_{y_2}}, \ldots, \alpha_{v_{y_{t+1}}})\not\equiv 0$ mod $p$.

\end{LEMMA}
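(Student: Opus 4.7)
The plan is to combine the minor hypothesis with a classical coloring observation, and then translate the resulting coloring into a non-vanishing evaluation of $\mathbf{Y}_{i-1}$; the passage from $\mathbf{Y}_{i-1}$ to $\mathbf{Y}'_{i-1}$ is then automatic.

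First, I would note that every intermediate graph $\mathcal{M}_{i-1}$ itself has no $K_{t+1}$ minor. Since $\mathcal{M}_{i-1}$ is obtained from $G$ by a sequence of edge deletions and edge contractions, it is a minor of $G$, so any $K_{t+1}$ minor of $\mathcal{M}_{i-1}$ would also be a $K_{t+1}$ minor of $G$, contradicting $h(G)=t$. In particular, $\mathcal{M}_{i-1}$ does not contain $K_{t+1}$ as a subgraph, so for any $Y = \{v_{y_1}, \ldots, v_{y_{t+1}}\} \in \mathcal{Y}$ the induced subgraph $\mathcal{M}_{i-1}[Y]$ on these $t+1$ vertices is not the complete graph $K_{t+1}$. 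Hence there exist indices $1 \leq c < d \leq t+1$ with $v_{y_c}v_{y_d} \notin E(\mathcal{M}_{i-1})$.

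Next I would exhibit an explicit $(\alpha_{v_{y_1}}, \ldots, \alpha_{v_{y_{t+1}}}) \in \mathcal{K}^{t+1}$: set $\alpha_{v_{y_c}} = \alpha_{v_{y_d}} = 1$ and assign the remaining $t-1$ vertices of $Y$ pairwise distinct colors from $\{2, 3, \ldots, t\}$, which is possible because there are exactly $t-1$ such colors. This is a proper $t$-coloring of $\mathcal{M}_{i-1}[Y]$, since the only monochromatic pair of vertices in $Y$ is $\{v_{y_c}, v_{y_d}\}$, and this pair is not an edge of $\mathcal{M}_{i-1}$.

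Finally I would verify factor by factor that $\mathbf{Y}_{i-1}(\alpha_{v_{y_1}}, \ldots, \alpha_{v_{y_{t+1}}}) \not\equiv 0 \pmod{p}$. For every edge $v_{y_c}v_{y_d} \in E(\mathcal{M}_{i-1})$ with $c<d$, the difference $\alpha_{v_{y_c}} - \alpha_{v_{y_d}}$ is a nonzero integer with $|\alpha_{v_{y_c}} - \alpha_{v_{y_d}}| \leq t-1 < p$, hence nonzero modulo $p$ since $p > 2\Delta^2 n \geq t$. Each vertex factor $\prod_{l=t+1}^{p}(\alpha_{v_{y_c}} - l)$ has $\alpha_{v_{y_c}} \in \{1, \ldots, t\}$ and $l \in \{t+1, \ldots, p\}$, so each individual factor $\alpha_{v_{y_c}} - l$ is a nonzero residue mod $p$. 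Therefore $\mathbf{Y}_{i-1}(\boldsymbol{\alpha}) \not\equiv 0 \pmod{p}$, and since $\mathbf{Y}'_{i-1}$ is obtained from $\mathbf{Y}_{i-1}$ by Fermat reduction $v_i^p \equiv v_i$ and hence agrees with $\mathbf{Y}_{i-1}$ on every evaluation in $\mathbb{Z}_p^{t+1}$, we conclude $\mathbf{Y}'_{i-1}(\boldsymbol{\alpha}) \not\equiv 0 \pmod{p}$, as required.

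The main obstacle is conceptual rather than computational: one must recognise that the ``no $K_{t+1}$ minor'' property is preserved along the sequence $\mathcal{M}_0 < \mathcal{M}_1 < \cdots < \mathcal{M}_{q+1}$, so that it can be applied to each intermediate graph $\mathcal{M}_{i-1}$. Once this is in place, the coloring construction is the standard ``merge a non-adjacent pair into one color class'' trick, and the non-vanishing of each polynomial factor is a direct size-vs-prime comparison using the choice $p > 2\Delta^2 n$.
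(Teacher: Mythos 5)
Your proof is correct and takes essentially the same approach as the paper: both arguments reduce to the observation that, because $\mathcal{M}_{i-1}$ is itself a minor of $G$ and hence has no $K_{t+1}$ minor, the induced subgraph $\mathcal{M}_{i-1}[Y]$ cannot be $K_{t+1}$, so it admits a proper $t$-coloring. The paper phrases this as a proof by contradiction (the non-existence of a good tuple would force $Y$ to induce $K_{t+1}$), while you argue directly by exhibiting the coloring and verifying each factor of $\mathbf{Y}_{i-1}(\boldsymbol\alpha)$ is nonzero mod $p$, which makes the argument a bit more explicit but is not a different route.
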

\begin{proof}
Suppose that there is a $Y = \{v_{y_1}, v_{y_2}, \ldots, v_{y_t}, v_{y_{t+1}}\} \in \mathcal{Y}$ such that there does not exist  $(\alpha_{v_{y_1}}, \alpha_{v_{y_2}}, \ldots, \alpha_{v_{y_{t+1}}})$  $\alpha_{v_{y_j}} \in \mathcal{K}$ $(1\leq j \leq t+1)$ such that $\mathbf{Y}_{i-1}(\alpha_{v_{y_1}}, $ $\alpha_{v_{y_2}}, \ldots, \alpha_{v_{y_{t+1}}})$ $\not\equiv 0$ mod $p$, that is, $\mathbf{Y'}_{i-1}(v_{y_1}, v_{y_2}, \ldots, v_{y_t}, v_{y_{t+1}}) \equiv 0$ mod $p$. $|\{e=v_{y_c}v_{y_d}\in E(\mathcal{M}_{i-1}): 1 \leq c < d\ \leq t+1\}| = \frac{(t+1)t}{2}$ follows from how polynomial $\mathbf{Y}_{i-1}(v_{y_1}, v_{y_2}, \ldots, v_{y_t}, v_{y_{t+1}})$ is defined. This implies that the vertices in the vertex set $Y$ induce a complete subgraph with $t+1$ vertices, that is, we must have an induced subgraph from the set $Y = \{v_{y_1}, v_{y_2}, \ldots, v_{y_t}, v_{y_{t+1}}\} \in \mathcal{Y}$, that is, any two vertices in $Y$ are adjacent if and only if they are adjacent in the graph $\mathcal{M}_{i-1}$, which is a complete subgraph with $t+1$ vertices. This contradicts the statement that $K_t$ ($t > 0$) is the largest complete graph to which the given graph $G$ can be contracted.
\end{proof}

The following Lemma \ref{l6} is a stronger version of the previous Lemma \ref{l5}, and it will be proven using Brooks' theorem \cite{brook}. The lemma proves that an induced simple graph whose degree is at most $t$ can be coloured using $t$-colors.
\begin{THEOREM}[Brooks Theorem]
Let $G$ be a simple graph. Then $\chi(G) \leq \Delta$ unless $G$ is either a complete graph or an odd cycle.
\end{THEOREM}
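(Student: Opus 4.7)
The plan is to prove Brooks' theorem by induction on $|V(G)|$; if $G$ is disconnected, apply the result to each component, so assume $G$ is connected. If $\Delta(G) \leq 2$, then $G$ is a path, an even cycle, or an odd cycle; the first two are $2$-colorable, and odd cycles are precisely the stated exception. Henceforth assume $\Delta = \Delta(G) \geq 3$ and $G$ is not complete.

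If $G$ is not $\Delta$-regular, I would pick a vertex $v$ with $\deg(v) < \Delta$ and let $v_1, v_2, \ldots, v_n = v$ be the reverse of a BFS ordering rooted at $v$. Every $v_i$ with $i < n$ has a later neighbor in the order (its BFS parent), hence at most $\Delta - 1$ earlier colored neighbors; and $v$ itself has $\deg(v) < \Delta$. Greedy coloring in this order therefore uses at most $\Delta$ colors.

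The main case is $G$ connected, $\Delta$-regular with $\Delta \geq 3$, and $G \neq K_{\Delta+1}$. The strategy is to find a \emph{good triple}: vertices $u, v, w$ with $u, w \in N(v)$, $uw \notin E(G)$, and $G - \{u, w\}$ connected. Given such a triple, color both $u$ and $w$ with color $1$, order the remaining vertices by a reverse BFS from $v$ inside $G - \{u, w\}$ so that $v$ is last, and color greedily. Every vertex other than $v$ has a later neighbor and hence at most $\Delta - 1$ earlier colored neighbors, while $v$ has all $\Delta$ neighbors already colored but $u$ and $w$ share a color, so only $\Delta - 1$ distinct colors appear among $N(v)$, leaving a color available for $v$.

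The hard part will be establishing the existence of the good triple via a connectivity case analysis. If $G$ is $3$-connected, any vertex $v$ works: since $G$ is not complete, some pair $u, w \in N(v)$ is nonadjacent, and $3$-connectivity guarantees $G - \{u, w\}$ stays connected. If $G$ is $2$-connected but not $3$-connected, fix a $2$-cut $\{x, y\}$ and exploit $\Delta$-regularity to exhibit the triple inside a component of $G - \{x, y\}$. If $G$ has a cut vertex $u$, split $G$ at $u$, apply the non-regular case to each lobe $H_i$ (each satisfies $\deg_{H_i}(u) < \Delta$, so the argument above applies), and permute color classes so the partial $\Delta$-colorings agree at $u$. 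This tripartite connectivity argument is the technical heart of Brooks' theorem.
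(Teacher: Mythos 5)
The paper does not supply its own proof of Brooks' Theorem; it simply cites it as a known result from Brooks (1941) and invokes it as a black box inside the proof of Lemma~\ref{l6}. There is therefore no ``paper'' argument to compare against. For what it is worth, your proposal is a correct sketch of the standard greedy/good-triple proof of Brooks' Theorem (essentially Lov\'asz's 1975 argument): handle low degree and non-regular graphs by greedy colouring in reverse BFS order, reduce through cut vertices by colour permutation at the articulation point, and in the $2$-connected $\Delta$-regular case locate a nonadjacent pair $u,w$ in the neighbourhood of some $v$ with $G-\{u,w\}$ connected, pre-colour $u$ and $w$ identically, and finish greedily. The only places you have left genuinely to the reader are (i) the exhibition of the good triple when $G$ is $2$-connected but not $3$-connected --- one needs to pick $v$ so that $G-v$ has a cut vertex, choose $u,w$ to be neighbours of $v$ lying in distinct leaf blocks of $G-v$ and distinct from the corresponding cut vertices, and use $\deg(v)=\Delta\geq 3$ to ensure $v$ still has a neighbour in $G-\{u,w\}$ --- and (ii) the small observation, which you implicitly use, that in a connected $\Delta$-regular graph distinct from $K_{\Delta+1}$ \emph{every} vertex has a nonadjacent pair of neighbours (if some vertex did not, its closed neighbourhood would already form all of $G$). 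Both are routine, and the overall strategy is sound.
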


\begin{LEMMA}
\label{l6}
The polynomial $\mathbf{Y}_{i-1}(v_{y_1}, v_{y_2}, \ldots, v_{y_k})$ is defined as follows, given $Y = \{v_{y_1}, v_{y_2}, \ldots, v_{y_k}\} \in \mathcal{C}(\mathcal{M}_{i-1}^t)$, $1 \leq y_1, y_2, \ldots, y_k \leq n$$:$ 
$$\mathbf{Y}_{i-1}(v_{y_1}, v_{y_2}, \ldots, v_{y_k}) = \prod_{\substack{1 \leq c < d \leq k\\ \text{and}\ e = v_{y_c}v_{y_d} \in E(\mathcal{M}_{i-1}[[Y]]_t)}}(v_{y_c} - v_{y_d})\prod_{v_{y_c} \in Y}\big(\prod_{l = t+1}^p(v_{y_c} - l)\big).$$

 Then, given $Y = \{v_{y_1}, v_{y_2}, \ldots, v_{y_k}\} \in \mathcal{C}(\mathcal{M}_{i-1}^t)$, $\mathbf{Y'}_{i-1}(v_{y_1}, v_{y_2}, \ldots, v_{y_k}) \not\equiv 0$ mod $p$, that is, there exists a $(\alpha_{v_{y_1}}, \alpha_{v_{y_2}}, \ldots, \alpha_{v_{y_k}})$ $\alpha_{v_{y_j}} \in \mathcal{K}$ $(1\leq j \leq k)$ such that $\mathbf{Y'}_{i-1}(\alpha_{v_{y_1}}, \alpha_{v_{y_2}}, \ldots, \alpha_{v_{y_k}})\not\equiv 0$ mod $p$.
\end{LEMMA}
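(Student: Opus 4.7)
The plan is to translate the polynomial non-vanishing claim into a graph-coloring statement about $\mathcal{M}_{i-1}[[Y]]_t$, which by the definition of $\mathcal{C}(\mathcal{M}_{i-1}^t)$ has maximum degree at most $t$, and then invoke Brooks' theorem to produce a proper $t$-coloring.

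First I would argue, exactly as in the reading of $\mathbf{P}_i$ via equation (\ref{e1}), that for any evaluation $(\alpha_{v_{y_1}},\ldots,\alpha_{v_{y_k}})$ at which $\mathbf{Y}_{i-1}$ is nonzero mod $p$, the factor $\prod_{l=t+1}^{p}(v_{y_c}-l)$ forces $\alpha_{v_{y_c}}\in\mathcal{K}=\{1,\ldots,t\}$, while the edge factors $(v_{y_c}-v_{y_d})$ ranging over $E(\mathcal{M}_{i-1}[[Y]]_t)$ force adjacent vertices of $\mathcal{M}_{i-1}[[Y]]_t$ to receive distinct values; conversely, any proper $t$-coloring of $\mathcal{M}_{i-1}[[Y]]_t$ yields such an evaluation. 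Thus the lemma reduces to showing that $\mathcal{M}_{i-1}[[Y]]_t$ admits a proper $t$-coloring.

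Next I would apply Brooks' theorem to each connected component $C$ of $\mathcal{M}_{i-1}[[Y]]_t$: since $\Delta(C)\leq t$, either $\chi(C)\leq t$, or $C$ is a complete graph $K_{t+1}$, or $C$ is an odd cycle. The first exception would realise $K_{t+1}$ as a (trivial) minor of $\mathcal{M}_{i-1}$ and hence of $G$, contradicting $h(G)=t$. For the odd-cycle exception, $\chi(\text{odd cycle})=3\leq t$ whenever $t\geq 3$; the remaining small cases $t\in\{1,2\}$ are ruled out because $\mathcal{M}_{i-1}$ then has no $K_3$ minor (elementary operations do not enlarge the largest complete minor), while every cycle has $K_3$ as a minor via repeated edge contractions. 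In every case $\chi(\mathcal{M}_{i-1}[[Y]]_t)\leq t$, producing the desired tuple $(\alpha_{v_{y_1}},\ldots,\alpha_{v_{y_k}})\in\mathcal{K}^k$ with $\mathbf{Y}_{i-1}(\alpha_{v_{y_1}},\ldots,\alpha_{v_{y_k}})\not\equiv 0$ mod $p$.

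Finally, the passage from $\mathbf{Y}_{i-1}$ to $\mathbf{Y'}_{i-1}$ is immediate via Fermat's theorem: $\gamma^{p}\equiv\gamma$ mod $p$ for every $\gamma\in\mathbb{Z}_p$, so $\mathbf{Y}_{i-1}(\alpha_{v_{y_1}},\ldots,\alpha_{v_{y_k}})\equiv\mathbf{Y'}_{i-1}(\alpha_{v_{y_1}},\ldots,\alpha_{v_{y_k}})$ mod $p$ at every assignment, and the tuple constructed in the previous step witnesses $\mathbf{Y'}_{i-1}\not\equiv 0$ mod $p$. The step I expect to be the main obstacle is the odd-cycle exception of Brooks' theorem: disposing of it cleanly requires invoking that $\mathcal{M}_{i-1}$ (not just $G$) inherits the no-$K_{t+1}$-minor property, and combining it with the fact that every cycle contracts to $K_3$ to exclude the small-$t$ subcases.
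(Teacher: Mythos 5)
Your proposal takes essentially the same route as the paper's own proof: reduce the polynomial non-vanishing to proper $t$-colorability of $\mathcal{M}_{i-1}[[Y]]_t$ and invoke Brooks' theorem, dispatching the two exceptional cases (complete graph, odd cycle) by appealing to the absence of a $K_{t+1}$ minor. Your treatment is in fact somewhat more careful than the paper's --- applying Brooks per connected component, observing that only a $K_{t+1}$ component is actually problematic (the paper asserts that a complete-graph case is simply ``not possible,'' which is false for $K_j$ with $j\leq t$), and explicitly deriving $t\geq 3$ in the odd-cycle case from the fact that every cycle contracts to $K_3$, whereas the paper states this implication without justification.
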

\begin{proof}
The three cases that follow are as follows:\\

Case 1: $\mathcal{M}_{i-1}[[Y]]_t$ is a complete graph.\\

It follows from the fact that the minor $\mathcal{M}$ of the graph $G$ is $K_t$. Therefore, this case is not possible.\\

Case 2: $\mathcal{M}_{i-1}[[Y]]_t$ is an odd cycle.\\

If this is the case, we know that the minor $\mathcal{M}$ of the graph $G$ is $K_t$ and therefore $t \geq 3$. So, we can conclude that there exists a $(\alpha_{v_{y_1}}, \alpha_{v_{y_2}}, \ldots, \alpha_{v_{y_k}})$ $\alpha_{v_{y_j}} \in \mathcal{K}$ $(1\leq j \leq k)$ such that $\mathbf{Y'}_{i-1}(\alpha_{v_{y_1}}, \alpha_{v_{y_2}}, \ldots, \alpha_{v_{y_k}})\not\equiv 0$ mod $p$.\\

Case 3: $\mathcal{M}_{i-1}[[Y]]_t$ is neither a complete graph nor an odd cycle.\\

$\mathcal{M}_{i-1}[[Y]]_t$ is a simple graph and degree of each vertex in $\mathcal{M}_{i-1}[[Y]]_t$ is at most $t$. It follows from the Brooks Theorem that $\chi(\mathcal{M}_{i-1}[[Y]]_t) \leq t$, and therefore, we can conclude that there exists $(\alpha_{v_{y_1}}, \alpha_{v_{y_2}}, \ldots, \alpha_{v_{y_k}})$ $\alpha_{v_{y_j}} \in \mathcal{K}$ $(1\leq j \leq k)$ such that $\mathbf{Y'}_{i-1}(\alpha_{v_{y_1}}, \alpha_{v_{y_2}}, \ldots, \alpha_{v_{y_k}})\not\equiv 0$ mod $p$.\\
\end{proof}
 	
\begin{REMARK}
	\label{rm1}
Suppose that $e = v_sv_b$ is an edge that is contracted or deleted by an elementary operation $\mathcal{o}_i$ $(q+1 \geq i \geq 1)$ on the simple graph $\mathcal{M}_{i-1}$ to obtain the simple graph $\mathcal{M}_i$, and $\mathcal{M}_{i}$ is $t$-colorable. The polynomial $\mathbf{H}_{i-1}(\boldsymbol{v})$ is defined as follows:
\begin{equation*}
	\mathbf{H}_{i-1}(\boldsymbol{v}) = \prod_{v_c \in V(\mathcal{M}_{i-1}\setminus e)}\Big(\prod_{v_d \in N_{\mathcal{M}_{i-1}\setminus e}(v_c)}(v_c - v_d)\prod_{l = t+1}^p(v_c - l)\Big).
	\end{equation*} 
	Then Lemma \ref{l1} and Lemma \ref{l2} guarantee that there exists an $\boldsymbol{\alpha}$ $\alpha_{v_j} \in \mathcal{K}$ $(1\leq j\leq n)$ such that $\mathbf{H}_{i-1}(\boldsymbol{\alpha})\not\equiv 0$ mod $p$.
\end{REMARK}

In light of Remark \ref{rm1}, the following Lemma \ref{l4} will give us all the hints on how \textit{vertex coloring} of the simple graph $\mathcal{M}_{i-1}$ $(q+1 \geq i \geq 1)$ is interdependent on the Lemma \ref{l1}, Lemma \ref{l2}, and Lemma \ref{l3} and the fact that the largest complete graph to which the given graph $G$ is contractible is $K_t$ ($t > 0$).
\begin{LEMMA}
	\label{l4}
Suppose that an edge $e = v_sv_b$ is contracted or deleted by an elementary operation $\mathcal{o}_i$ $(q+1 \geq i \geq 1)$ on the simple graph $\mathcal{M}_{i-1}$ to obtain the simple graph $\mathcal{M}_i$. 
 The polynomial $\mathbf{S}_{i-1}(\boldsymbol{v})$ is defined as follows, 
 \begin{equation*}
\mathbf{S}_{i-1}(\boldsymbol{v}) = \prod_{v_c \in V(\mathcal{M}_{i-1}\setminus e)}\Big(\prod_{v_d \in N_{\mathcal{M}_{i-1}\setminus e}(v_c)}(v_c - v_d)\prod_{l = t+1}^p(v_c - l)\Big)(v_s-v_b),
\end{equation*}
and there is an $\boldsymbol{\alpha}$ $\alpha_{v_j} \in \mathcal{K}$ $(1\leq j \leq n)$ such that $\mathbf{S}_{i-1}(\boldsymbol{\alpha})\not\equiv 0$ mod $p$. Then $\mathbf{P}_{i-1}(\boldsymbol{\alpha})\not\equiv 0$ mod $p$.
\end{LEMMA}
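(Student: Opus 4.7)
The plan is to verify a simple algebraic identity relating $\mathbf{P}_{i-1}(\boldsymbol{v})$ and $\mathbf{S}_{i-1}(\boldsymbol{v})$ and then derive the conclusion from two observations: the hypothesis that $\mathbf{S}_{i-1}(\boldsymbol{\alpha})\not\equiv 0 \bmod p$, and the fact that $(v_s-v_b)$ is a factor of $\mathbf{S}_{i-1}(\boldsymbol{v})$. I would first compare the defining products. In $\mathbf{P}_{i-1}(\boldsymbol{v})$, the outer product runs over $v_c \in V(\mathcal{M}_{i-1})$ and the inner one over $v_d \in N_{\mathcal{M}_{i-1}}(v_c)$, so the edge $e = v_sv_b \in E(\mathcal{M}_{i-1})$ is encountered twice, once as $(v_c,v_d) = (v_s,v_b)$ and once as $(v_b,v_s)$, contributing the two factors $(v_s - v_b)$ and $(v_b - v_s)$. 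The graph $\mathcal{M}_{i-1}\setminus e$ differs from $\mathcal{M}_{i-1}$ only in the absence of the edge $e$, and the factors $\prod_{l=t+1}^p(v_c-l)$ are identical in both polynomials. Hence
\begin{equation*}
\mathbf{P}_{i-1}(\boldsymbol{v}) = \mathbf{H}_{i-1}(\boldsymbol{v})(v_s-v_b)(v_b-v_s),
\end{equation*}
where $\mathbf{H}_{i-1}$ is the polynomial from Remark \ref{rm1}. Since $\mathbf{S}_{i-1}(\boldsymbol{v}) = \mathbf{H}_{i-1}(\boldsymbol{v})(v_s-v_b)$ by the definition stated in the lemma, this rearranges to $\mathbf{P}_{i-1}(\boldsymbol{v}) = \mathbf{S}_{i-1}(\boldsymbol{v})(v_b-v_s)$.

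Next, I would exploit the hypothesis. If $\mathbf{S}_{i-1}(\boldsymbol{\alpha})\not\equiv 0 \bmod p$, then every factor of $\mathbf{S}_{i-1}(\boldsymbol{v})$ evaluated at $\boldsymbol{\alpha}$ must be nonzero modulo $p$, because $\mathbb{Z}_p$ is a field (and therefore an integral domain). In particular, the distinguished factor $(v_s-v_b)$ of $\mathbf{S}_{i-1}(\boldsymbol{v})$ yields $\alpha_{v_s}-\alpha_{v_b}\not\equiv 0 \bmod p$, and consequently also $\alpha_{v_b}-\alpha_{v_s}\not\equiv 0 \bmod p$.

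Finally, combining the two, $\mathbf{P}_{i-1}(\boldsymbol{\alpha}) = \mathbf{S}_{i-1}(\boldsymbol{\alpha})(\alpha_{v_b}-\alpha_{v_s})$ is a product of two elements of $\mathbb{Z}_p$, each nonzero modulo $p$, hence nonzero modulo $p$. This gives the conclusion $\mathbf{P}_{i-1}(\boldsymbol{\alpha})\not\equiv 0 \bmod p$.

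There is no substantial obstacle in this argument: the lemma is essentially a bookkeeping statement relating two polynomials that differ by a single edge factor. The only care needed is in observing that each edge of $\mathcal{M}_{i-1}$ contributes two conjugate linear factors to $\mathbf{P}_{i-1}$ while $\mathbf{S}_{i-1}$ captures only one of the two factors for the distinguished edge $e = v_sv_b$. The real work this lemma supports lies downstream, in Claim \ref{cl1} and beyond, where Lemma \ref{l4} is combined with Lemmas \ref{l1}--\ref{l3} and the structure of the minor $K_t$ to transfer non-vanishing between successive polynomials $\mathbf{P}_i$ and $\mathbf{P}_{i-1}$ in the sequence.
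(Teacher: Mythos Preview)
Your argument is correct and is exactly the paper's approach in its Cases~1a and~2: establish the identity $\mathbf{P}_{i-1}(\boldsymbol{v}) = \mathbf{S}_{i-1}(\boldsymbol{v})(v_b-v_s)$, observe that $\mathbf{S}_{i-1}(\boldsymbol{\alpha})\not\equiv 0$ forces $\alpha_{v_s}-\alpha_{v_b}\not\equiv 0$ via the explicit factor $(v_s-v_b)$, and conclude. The paper additionally separates out a Case~1b (edge deletion that isolates $v_s$, so $|V(\mathcal{M}_{i-1})|=|V(\mathcal{M}_i)|+1$) and there bypasses the identity by invoking Corollary~\ref{cr2}; your uniform treatment via the identity already covers that sub-case as well, since the only possible discrepancy is an extra factor $\prod_{l=t+1}^{p}(v_s-l)$, which is nonzero at $\boldsymbol{\alpha}$ because $\alpha_{v_s}\in\mathcal{K}$.
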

\begin{proof}
Here we have two cases:\\

\textbf{Case 1}: Suppose that an edge $e = v_sv_b$ is deleted by an elementary operation $\mathcal{o}_i$ on the simple graph $\mathcal{M}_{i-1}$ to obtain the simple graph $\mathcal{M}_i$.\\

Here, again, we have two more cases:\\

\textbf{Case 1a}: when $V(\mathcal{M}_{i-1})=V(\mathcal{M}_i)$.\\

We have, by the definition of $\mathbf{P}_{i-1}(\boldsymbol{v})$, 
\begin{align*}
\mathbf{P}_{i-1}(\boldsymbol{v}) &= \prod_{v_c \in V(\mathcal{M}_{i-1})}\Big(\prod_{v_d \in N_{\mathcal{M}_{i-1}}(v_c)}(v_c - v_d)\prod_{l = t+1}^p(v_c - l)\Big),
\end{align*}
it can be rewritten as,
\begin{align*}
\mathbf{P}_{i-1}(\boldsymbol{v}) &= \prod_{v_c \in V(\mathcal{M}_{i-1}\setminus e)}\Big(\prod_{v_d \in N_{\mathcal{M}_{i-1}\setminus e}(v_c)}(v_c - v_d)\prod_{l = t+1}^p(v_c - l)\Big)(v_s-v_b)(v_b-v_s),
\end{align*}
that is,
\begin{equation*}
\mathbf{P}_{i-1}(\boldsymbol{v}) = \mathbf{S}_{i-1}(\boldsymbol{v})(v_b-v_s).
\end{equation*}
Since $\mathbf{S}_{i-1}(\boldsymbol{\alpha})\not\equiv 0$ mod $p$, we can conclude that,
\begin{equation*}
\mathbf{P}_{i-1}(\boldsymbol{\alpha}) = \mathbf{S}_{i-1}(\boldsymbol{\alpha})(\alpha_{v_b}-\alpha_{v_s}) \not\equiv 0\ \text{mod}\ p.
\end{equation*}	

\textbf{Case 1b}: when $|V(\mathcal{M}_{i-1})|=|V(\mathcal{M}_i)|+1$.\\
Without loss of generality, we assume that, $V(\mathcal{M}_i) = V(\mathcal{M}_{i-1}) \setminus \{v_s\}$. 
We have, by the definition of $\mathbf{P}_{i-1}(\boldsymbol{v})$, 
\begin{align*}
\mathbf{P}_{i-1}(\boldsymbol{v}) &= \prod_{v_c \in V(\mathcal{M}_{i-1})}\Big(\prod_{v_d \in N_{\mathcal{M}_{i-1}}(v_c)}(v_c - v_d)\prod_{l = t+1}^p(v_c - l)\Big),
\end{align*}
it can be rewritten as,
\begin{align*}
\mathbf{P}_{i-1}(\boldsymbol{v})  &= \prod_{v_c \in V(\mathcal{M}_i)}\Big(\prod_{v_d \in N_{\mathcal{M}_i}(v_c)}(v_c - v_d)\prod_{l = t+1}^p(v_c - l)\Big)(v_s-v_b)(v_b-v_s)\prod_{l = t+1}^p(v_s-l).
\end{align*}
From Corollary \ref{cr2}, it follows that there exists an  $\boldsymbol{\alpha}$ $\alpha_{v_j} \in \mathcal{K}$ ($1 \leq j \leq n$) such that $\mathbf{P}_{i-1}(\boldsymbol{\alpha})\not\equiv 0$ mod $p$.\\

\textbf{Case 2}: Suppose that an edge $e = v_sv_b$ is contracted by an elementary operation $\mathcal{o}_i$ on the simple graph $\mathcal{M}_{i-1}$ to obtain the simple graph $\mathcal{M}_i$.\\

We have, by the definition of $\mathbf{P}_{i-1}(\boldsymbol{v})$, 
\begin{align*}
\mathbf{P}_{i-1}(\boldsymbol{v}) &= \prod_{v_c \in V(\mathcal{M}_{i-1})}\Big(\prod_{v_d \in N_{\mathcal{M}_{i-1}}(v_c)}(v_c - v_d)\prod_{l = t+1}^p(v_c - l)\Big),
\end{align*}
it can be rewritten as,
\begin{align*}
\mathbf{P}_{i-1}(\boldsymbol{v}) &= \prod_{v_c \in V(\mathcal{M}_{i-1}\setminus e)}\Big(\prod_{v_d \in N_{\mathcal{M}_{i-1}\setminus e}(v_c)}(v_c - v_d)\prod_{l = t+1}^p(v_c - l)\Big)(v_s-v_b)(v_b-v_s),
\end{align*}
that is,
\begin{equation*}
\mathbf{P}_{i-1}(\boldsymbol{v}) = \mathbf{S}_{i-1}(\boldsymbol{v})(v_b-v_s).
\end{equation*}
Since $\mathbf{S}_{i-1}(\boldsymbol{\alpha})\not\equiv 0$ mod $p$, we can conclude that,
\begin{equation*}
\mathbf{P}_{i-1}(\boldsymbol{\alpha}) = \mathbf{S}_{i-1}(\boldsymbol{\alpha})(\alpha_{v_b}-\alpha_{v_s}) \not\equiv 0\ \text{mod}\ p.
\end{equation*}	
\end{proof}

So far, we have proven all of the necessary lemmas to validate the main result. Now we will concentrate on proving the main point. Since the minor of the given graph $G$, $\mathcal{M}$, is the complete graph with $t$ vertices, we can make the following claim.

\begin{CLAIM}
	\label{cl1}
There exists an $n$-tuple $\boldsymbol{\alpha}$ $\alpha_{v_j} \in \mathcal{K}$ $(1\leq j\leq n)$ such that $\mathbf{P}_{q+1}(\boldsymbol{\alpha})\not\equiv 0$ mod $p$, that is, $\mathbf{P'}_{q+1}(\boldsymbol{v})\not\equiv 0$ mod $p$.
\end{CLAIM}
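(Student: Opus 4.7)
The proof will be essentially immediate from the setup, since $\mathcal{M}_{q+1} = \mathcal{M} = K_t$ is trivially $t$-colorable: we just need to translate that trivial $t$-coloring of $K_t$ into a nonvanishing evaluation of the polynomial $\mathbf{P}_{q+1}(\boldsymbol{v})$.

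The plan is as follows. Let $V(\mathcal{M}_{q+1}) = \{v_{j_1}, v_{j_2}, \ldots, v_{j_t}\} \subseteq V(G)$. Define the $n$-tuple $\boldsymbol{\alpha}$ by setting $\alpha_{v_{j_k}} = k$ for $k = 1, 2, \ldots, t$ (a distinct color in $\mathcal{K}$ assigned to each vertex of $K_t$), and setting $\alpha_{v_r}$ to be any arbitrary element of $\mathcal{K}$ for the remaining indices $v_r \notin V(\mathcal{M}_{q+1})$; the latter choice is immaterial since the variables $v_r$ do not appear in $\mathbf{P}_{q+1}(\boldsymbol{v})$.

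Next, I would evaluate $\mathbf{P}_{q+1}(\boldsymbol{\alpha})$ factor by factor. Every factor in the product is of one of two shapes: either $\alpha_{v_{j_c}} - \alpha_{v_{j_d}} = c - d$ for some distinct $c, d \in \{1, \ldots, t\}$ (coming from an edge of $K_t$), or $\alpha_{v_{j_c}} - l = c - l$ for some $c \in \{1, \ldots, t\}$ and $l \in \{t+1, \ldots, p\}$. In either case the factor is a nonzero integer whose absolute value is strictly less than $p$ (using the hypothesis $p > 2\Delta^2 n \geq t$), hence nonzero modulo $p$. Therefore the entire product is nonzero modulo $p$, giving $\mathbf{P}_{q+1}(\boldsymbol{\alpha}) \not\equiv 0 \pmod p$.

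Finally, since $x^p \equiv x \pmod p$ for every $x \in \mathbb{Z}_p$ by Fermat's theorem, the evaluations $\mathbf{P}_{q+1}(\boldsymbol{\alpha})$ and $\mathbf{P'}_{q+1}(\boldsymbol{\alpha})$ coincide in $\mathbb{Z}_p$, so $\mathbf{P'}_{q+1}(\boldsymbol{\alpha}) \not\equiv 0 \pmod p$ as well, which by the convention established in Section~2 means that $\mathbf{P'}_{q+1}(\boldsymbol{v}) \not\equiv 0 \pmod p$ as a polynomial.

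There is no real obstacle at this stage: the claim simply rephrases, in the polynomial language of Section~3, the trivial observation that the complete graph $K_t$ admits a proper $t$-coloring. The substantive difficulty of the paper is to propagate this nonvanishing backward along the chain $\mathcal{M}_{q+1} > \mathcal{M}_q > \cdots > \mathcal{M}_0 = G$, which is the content of the subsequent Claim~\ref{cl2} and its supporting apparatus; here we only need to establish the base case.
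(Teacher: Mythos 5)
Your proposal is correct and follows essentially the same route as the paper: both observe that $\mathcal{M}_{q+1} = \mathcal{M}$ is the complete graph $K_t$, which is trivially $t$-colorable, and translate that coloring into a nonvanishing evaluation $\mathbf{P}_{q+1}(\boldsymbol{\alpha}) \not\equiv 0 \pmod p$. You simply spell out the factor-by-factor check and the role of $p > t$ that the paper leaves implicit.
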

\begin{proof}
	We know that the minor $\mathcal{M}$ is a complete graph consisting of $t$ vertices. It is a basic fact that the simple graph $\mathcal{M}$ can be colored using $t$-colors. Therefore, there exists an $n$-tuple $\boldsymbol{\alpha}$ $\alpha_{v_j} \in \mathcal{K}$ $(1\leq j \leq n)$ such that the polynomial $\mathbf{P}_{q+1}(\boldsymbol{\alpha})\not\equiv 0$ mod $p$.
\end{proof}
Our goal is achieved by proving the following claim,
\begin{CLAIM}
	\label{cl2}
	There exists an $n$-tuple $\boldsymbol{\alpha}$ $\alpha_{v_j} \in \mathcal{K}$ $(1\leq j\leq n)$ such that $\mathbf{P}_{0}(\boldsymbol{\alpha})\not\equiv 0$ mod $p$, that is, $\mathbf{P'}_{0}(\boldsymbol{v})\not\equiv 0$ mod $p$.
\end{CLAIM}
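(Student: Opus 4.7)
The plan is to prove Claim \ref{cl2} by reverse induction on $i$, descending from $i = q+1$ down to $i = 0$. The base case $i = q+1$ is exactly Claim \ref{cl1}. For the inductive step, I would assume the existence of an $n$-tuple $\boldsymbol{\alpha}$ with $\alpha_{v_j} \in \mathcal{K}$ such that $\mathbf{P}_i(\boldsymbol{\alpha}) \not\equiv 0$ mod $p$ and deduce the analogous statement for $\mathbf{P}_{i-1}$; iterating this one-step reduction all the way down yields Claim \ref{cl2}. The chain Corollary \ref{cor2}~$\Rightarrow$~Corollary \ref{cor3}~$\Rightarrow$~Corollary \ref{cor1}~$\Rightarrow$~Corollary \ref{cor4}~$\Rightarrow$~Corollary \ref{cor5} indicated in Figure \ref{fig:blockcv} appears to be precisely this iteration.

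First I would invoke Lemma \ref{l4} to reduce the inductive step: it suffices to produce an $\boldsymbol{\alpha}$ with $\alpha_{v_j}\in\mathcal{K}$ satisfying $\mathbf{S}_{i-1}(\boldsymbol{\alpha}) \not\equiv 0$ mod $p$, which combinatorially is a $t$-coloring of $\mathcal{M}_{i-1}\setminus e$ in which $v_s$ and $v_b$ receive \emph{different} colors (so that the extra factor $(v_s-v_b)$ in $\mathbf{S}_{i-1}$ survives). The edge-deletion sub-cases are easy: Lemma \ref{l2} (with Corollary \ref{cr2}) or Lemma \ref{l3} directly produces a $t$-coloring of $\mathcal{M}_{i-1}$, and since $v_sv_b \in E(\mathcal{M}_{i-1})$, any proper coloring automatically separates $v_s$ from $v_b$. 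The substantive case is edge contraction: Lemma \ref{l1} only produces a coloring of $\mathcal{M}_{i-1}\setminus e$ with $\ell(v_s)=\ell(v_b)$, which is useless on its own.

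For the contraction case I would follow the blueprint in Figure \ref{fig:blockcv} and argue by contradiction. Introduce Hypothesis \ref{h1}, asserting $\mathbf{P}'_{i-1}(\boldsymbol{v})\equiv 0$ mod $p$, and refute it via Claims \ref{cl3} and \ref{cl4}. These auxiliary claims should exploit Lemmas \ref{l5} and \ref{l6}: applied via Brooks' Theorem and the standing hypothesis that $G$ has no $K_{t+1}$ minor (so neither does $\mathcal{M}_{i-1}$), they guarantee that every induced subgraph of $\mathcal{M}_{i-1}$ lying in $\mathcal{C}(\mathcal{M}_{i-1}^t)$ is properly $t$-colorable. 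The role of this colorability is to construct, out of the coloring of $\mathcal{M}_i$ supplied by the inductive hypothesis, a modified coloring of $\mathcal{M}_{i-1}\setminus e$ that differs from the Lemma \ref{l1} coloring on a carefully chosen low-degree induced subgraph containing $v_s$ and $v_b$, thereby breaking the equality $\ell(v_s)=\ell(v_b)$. Corollary \ref{cor2} would then translate the failure of Hypothesis \ref{h1} into the conclusion $\mathbf{P}'_{i-1}(\boldsymbol{v})\not\equiv 0$ mod $p$, and Corollary \ref{cor3} would pin this non-vanishing evaluation to the color set $\mathcal{K}$.

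The main obstacle is precisely the warning issued at the end of Figure \ref{fig:blockcvv}: the polynomial identities connecting $\mathbf{P}_{i-1}$ to $\hat{\mathbf{P}}_{i-1}$ through the auxiliary color $\beta \in \mathbb{Z}_p\setminus\mathcal{K}$ only guarantee a non-vanishing $\mathcal{K}'$-evaluation, and there is no purely algebraic mechanism that transports such an evaluation back into $\mathcal{K}$. So the crux is genuinely combinatorial and must be anchored to the minor-exclusion hypothesis; the delicate step is identifying the correct induced subgraph to which Lemma \ref{l6} is applied so that the absence of a $K_{t+1}$ minor in $\mathcal{M}_{i-1}$ forces the existence of a proper $t$-coloring separating $v_s$ from $v_b$, and in verifying that this local recoloring can be extended consistently to all of $\mathcal{M}_{i-1}\setminus e$ without introducing a monochromatic edge elsewhere.
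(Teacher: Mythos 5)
Your reverse-induction skeleton, your use of Lemma \ref{l4} to reduce the inductive step to producing $\boldsymbol{\alpha}$ with $\mathbf{S}_{i-1}(\boldsymbol{\alpha}) \not\equiv 0$, and your dispatch of the easy edge-deletion case via Lemma \ref{l3}/Remark \ref{rm2} all track the paper faithfully. However, your account of how Claims \ref{cl3} and \ref{cl4} are established is not what the paper does, and your proposal stops short precisely where the paper makes its decisive move.

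You assign the load-bearing role to Lemmas \ref{l5}--\ref{l6} and Brooks' Theorem, and you describe the construction as recoloring ``a carefully chosen low-degree induced subgraph containing $v_s$ and $v_b$'' so that $\ell(v_s) \neq \ell(v_b)$. The paper does nothing of the sort in the proofs of Claim \ref{cl4}, Claim \ref{cl3}, or Corollary \ref{cor2}; Lemmas \ref{l5}--\ref{l6} are never actually invoked there. The paper's Claim \ref{cl4} is a short observation: since $\mathcal{M}_i$ is $t$-colorable, one can color every vertex of $\mathcal{M}_{i-1}\setminus e$ except $v_s$ from $\mathcal{K}$ and color $v_s$ from the full field $\mathbb{Z}_p$, whence $\mathbf{Q}'_{i-1}(\boldsymbol{v})\not\equiv 0$. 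Claim \ref{cl3} is then proved by purely algebraic means: the auxiliary polynomials $\mathbf{G}$, $\mathbf{H}_{v_c}$, $\mathbf{J}_{\prod l'_r}$, the decomposition of $V(\mathcal{M}_{i-1}\setminus e)$ into $M_1\cup M_2\cup\{v_s\}$, and Theorems \ref{th2}, \ref{th1}, \ref{th3} (imported from \cite{sri}) yield $\beta\in\mathbb{Z}_p\setminus\{0,1,\ldots,t\}$ such that $(v_c-\beta)$ is a square-free factor of $\mathbf{G}'(\boldsymbol{v})$ for all $v_c\in M_2$, and hence $\hat{\mathbf{S}}_{i-1}(\boldsymbol{v})\not\equiv 0$ on the color set $\mathcal{K}' = \{1,\ldots,t-1,\beta\}$. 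None of this resembles your ``find the right $\mathcal{C}(\mathcal{M}_{i-1}^t)$-subgraph and apply Brooks'' scheme.

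More importantly, you explicitly flag the obstacle --- that a non-vanishing $\mathcal{K}'$-evaluation does not algebraically force a non-vanishing $\mathcal{K}$-evaluation --- and then leave it open (``the delicate step is identifying the correct induced subgraph to which Lemma \ref{l6} is applied \ldots''). That is exactly where your argument has no content yet; a wish for the right subgraph is not a proof. The paper closes this gap in Corollary \ref{cor2} by a one-line combinatorial relabeling: given the proper $\mathcal{K}'$-coloring furnished by Claim \ref{cl3}, replace the color $\beta$ by the color $t$ everywhere, and assert the result is still proper (which holds because $t$ is absent from $\mathcal{K}'$, so no conflict can be created by the substitution). Your proposal neither supplies this relabeling nor a working substitute for it, so as written it does not establish Claim \ref{cl2}.
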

	We have proven that Claim \ref{cl1} is true, that is, $\mathbf{P'}_{q+1}(\boldsymbol{v})\not\equiv 0$ mod $p$. Suppose there does not exist $(\alpha'_{v_1}, \alpha'_{v_2}, \ldots, \alpha'_{v_n})$ $\alpha'_{v_j} \in \mathcal{K}$ $(1\leq j\leq n)$ such that $\mathbf{P}_{0}(\alpha'_{v_1}, \alpha'_{v_2}, \ldots, \alpha'_{v_n}) \not\equiv 0$ mod $p$. Then there exists some $i$ $(q+1 \geq i \geq 1)$ such that following Hypothesis \ref{h1} has to be true,
	\begin{HYPOTHESIS}
		\label{h1}
		There exists an $\boldsymbol{\alpha}$ $\alpha_{v_j} \in \mathcal{K}$ $(1\leq j \leq n)$ such that $\mathbf{P}_{i}(\boldsymbol{\alpha})\not\equiv 0$ mod $p$ ,that is, $\mathbf{P'}_{i}(\boldsymbol{v})\not\equiv 0$ mod $p$  but there does not exist $(\alpha'_{v_1}, \alpha'_{v_2}, \ldots, \alpha'_{v_n})$ $\alpha'_{v_j} \in \mathcal{K}$ $(1\leq j \leq n)$ such that $\mathbf{P}_{i-1}(\alpha'_{v_1}, \alpha'_{v_2}, \ldots, \alpha'_{v_n})\not\equiv 0$ mod $p$, that is, $\mathbf{P'}_{i-1}(\boldsymbol{v})\equiv 0$ mod $p$.
	\end{HYPOTHESIS} 
\textbf{We have to prove that the Hypothesis \ref{h1} is not true, in other words, we have to prove that $\mathbf{P'}_{i-1}(\boldsymbol{v}) \not\equiv 0$ mod $p$.}\\

Without loss of generality, we assume that, $e = v_sv_b$ ($1 \leq s, b \leq n$), is the edge that is either contracted or deleted by an elementary operation $\mathcal{o}_i$ on the simple graph $\mathcal{M}_{i-1}$ to obtain the simple graph $\mathcal{M}_i$. That is, $\mathcal{M}_{i} = \mathcal{M}_{i-1}/e$ or $\mathcal{M}_{i} = \mathcal{M}_{i-1}\setminus{e}$. 

\begin{REMARK}
\label{rm2}
We ignore the case when $e = v_sv_b$ is an edge that is deleted by an elementary operation $\mathcal{o}_i$ on the simple graph $\mathcal{M}_{i-1}$ to obtain the simple graph $\mathcal{M}_i$ such that  $|V(\mathcal{M}_{i-1})|=|V(\mathcal{M}_i)|+1$ in light of Lemma \ref{l3}. Because Lemma \ref{l3} ensures that there is an $\boldsymbol{\alpha}$ $\alpha_{v_j} \in \mathcal{K}$ $(1\leq j \leq n)$ for which $\mathbf{P}_{i-1}(\boldsymbol{\alpha})\not\equiv 0$ mod $p$. The mapping $\ell(v_j) = \alpha_{v_j}$ $(1 \leq j \leq n)$ will then colour the vertices of the simple graph $\mathcal{M}_{i-1}$, resulting in a $t$-colorable simple graph $\mathcal{M}_{i-1}$. Hypothesis \ref{h1} is therefore false.
\end{REMARK}

Since $\mathbf{P'}_{i}(\boldsymbol{v})\not\equiv 0$ mod $p$, that is, there exists an $\boldsymbol{\alpha}$ $\alpha_{v_j} \in \mathcal{K}$ $(1\leq j \leq n)$ such that $\mathbf{P}_{i}(\boldsymbol{\alpha})\not\equiv 0$ mod $p$. Then the mapping 
$\ell(v_j) = \alpha_{v_j}$ ($1 \leq j \leq n$) will color the vertices of the simple graph $\mathcal{M}_{i}$. Therefore vertices of the simple graph $\mathcal{M}_{i}$ can be colored using $t$-colors. It follows from Remark \ref{rm1} that there exists a $(\alpha'_{v_1}, \alpha'_{v_2}, \ldots, \alpha'_{v_n})$ $\alpha'_{v_j} \in \mathcal{K}$ $(1\leq j \leq n)$ such that $\mathbf{H}_{i-1}(\alpha'_{v_1}, \alpha'_{v_2}, \ldots, \alpha'_{v_n})\not\equiv 0$ mod $p$, that is,

\begin{equation*}
	\mathbf{H}_{i-1}(\boldsymbol{v}) = \prod_{v_c \in V(\mathcal{M}_{i-1}\setminus e)}\Big(\prod_{v_d \in N_{\mathcal{M}_{i-1}\setminus e}(v_c)}(v_c - v_d)\prod_{l = t+1}^p(v_c - l)\Big) \not\equiv 0\ \textnormal{mod}\ p.
	\end{equation*} 

Now, consider the polynomial $\mathbf{S}_{i-1}(\boldsymbol{v})$ (as defined in Lemma \ref{l4}),
$\mathbf{S}_{i-1}(\boldsymbol{v}) = \mathbf{H}_{i-1}(\boldsymbol{v})(v_s-v_b),$ that is,
\begin{equation}
\label{eq21}
\mathbf{S}_{i-1}(\boldsymbol{v}) = \prod_{v_c \in V(\mathcal{M}_{i-1}\setminus e)}\Big(\prod_{v_d \in N_{\mathcal{M}_{i-1}\setminus e}(v_c)}(v_c - v_d)\prod_{l = t+1}^p(v_c - l)\Big)(v_s-v_b).
\end{equation}
 In light of Lemma \ref{l4}, suppose there exists an  $\boldsymbol{\alpha}$ $\alpha_{v_j} \in \mathcal{K}$ $(1\leq j \leq n)$ such that $\mathbf{S}_{i-1}(\boldsymbol{\alpha})\not\equiv 0$ mod $p$, that is, $\mathbf{S'}_{i-1}(\boldsymbol{v}) \not\equiv 0$ mod $p$, then  $\mathbf{P'}_{i-1}(\boldsymbol{v}) \not\equiv 0$ mod $p$, implying that Hypothesis \ref{h1} is not true.\\

Suppose $\mathbf{S'}_{i-1}(\boldsymbol{v}) \equiv 0$ mod $p$ (congruence relation (\ref{eq21})), that is, there does not exist an $\boldsymbol{\alpha}$ $\alpha_{v_j} \in \mathcal{K}$ $(1\leq j \leq n)$ such that $\mathbf{S}_{i-1}(\boldsymbol{\alpha})\not\equiv 0$ mod $p$, in other words, we have
\begin{align*}
\mathbf{S}_{i-1}(\boldsymbol{v}) &= \prod_{v_c \in V(\mathcal{M}_{i-1}\setminus e)}\Big(\prod_{v_d \in N_{\mathcal{M}_{i-1}\setminus e}(v_c)}(v_c - v_d)\prod_{l = t+1}^p(v_c - l)\Big)(v_s-v_b)\ \equiv 0\ \text{mod}\ p,
\end{align*}
it can be rewritten as,
\begin{align*}
\mathbf{S}_{i-1}(\boldsymbol{v}) &= \underbrace{\Big(\prod_{v_c \in V(\mathcal{M}_{i-1}\setminus e)}\Big(\prod_{v_d \in N_{\mathcal{M}_{i-1}\setminus e}(v_c)}(v_c - v_d)\prod_{\substack{l = t+1\\  \text{if}\ c \neq s}}^p(v_c - l)\Big)(v_s-v_b)\Big)}_{\text{We denote the under-brace products by\ } \mathbf{Q}_{i-1}(\boldsymbol{v}).}\Big(\prod_{l = t+1}^p(v_s-l)\Big)\ \equiv 0\ \text{mod}\ p,
\end{align*}
that is,
\begin{align*}
\mathbf{S}_{i-1}(\boldsymbol{v}) &= \mathbf{Q}_{i-1}(\boldsymbol{v})\prod_{l = t+1}^p(v_s-l) \equiv 0\ \text{mod}\ p,
\end{align*}
where 
\begin{align}
\label{eq20}
\mathbf{Q}_{i-1}(\boldsymbol{v}) = \prod_{v_c \in V(\mathcal{M}_{i-1}\setminus e)}\Big(\prod_{v_d \in N_{\mathcal{M}_{i-1}\setminus e}(v_c)}(v_c - v_d)\prod_{\substack{l = t+1\\  \text{if}\ c \neq s}}^p(v_c - l)\Big)(v_s-v_b).
\end{align}
We can easily claim the following,
\begin{CLAIM}
	\label{cl4}
 The polynomial $\mathbf{Q'}_{i-1}(\boldsymbol{v}) \not\equiv 0$ mod $p$, that is, there exists a $(\gamma'_{v_1}, \gamma'_{v_2}, \ldots, \gamma'_{v_n})$ $\gamma'_{v_j} (j \neq s) \in \mathcal{K}$ $(1\leq j\leq n)$ and $\gamma'_{v_s} \in \mathbb{Z}_p$ such that $\mathbf{Q}_{i-1}(\gamma'_{v_1}, \gamma'_{v_2}, \ldots, \gamma'_{v_n})\not\equiv 0$ mod $p$.
\end{CLAIM}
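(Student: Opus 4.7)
The plan is to exhibit, by hand, a concrete evaluation point at which $\mathbf{Q}_{i-1}$ does not vanish modulo $p$; since $\mathbf{Q}_{i-1}$ and its Fermat-reduced form $\mathbf{Q}'_{i-1}$ agree as functions on $\mathbb{Z}_p^n$, producing a single nonzero evaluation will already certify $\mathbf{Q}'_{i-1}(\boldsymbol{v})\not\equiv 0\bmod p$. The starting input is Remark \ref{rm1}: because $\mathcal{M}_i$ is $t$-colorable by Hypothesis \ref{h1}, and $\mathcal{o}_i$ is either a contraction or an edge deletion with $|V(\mathcal{M}_{i-1})|=|V(\mathcal{M}_i)|$ (the remaining deletion case has already been eliminated in Remark \ref{rm2}), Lemmas \ref{l1} and \ref{l2} together yield an $\boldsymbol{\alpha}$ with every $\alpha_{v_j}\in\mathcal{K}$ and $\mathbf{H}_{i-1}(\boldsymbol{\alpha})\not\equiv 0\bmod p$; equivalently, $\boldsymbol{\alpha}$ is a proper $t$-coloring of $\mathcal{M}_{i-1}\setminus e$.

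I would then form the candidate tuple $\boldsymbol{\gamma}'$ by setting $\gamma'_{v_j}=\alpha_{v_j}$ for every $j\neq s$ and choosing $\gamma'_{v_s}\in\mathbb{Z}_p$ to avoid the finite ``forbidden'' set
\[
\mathcal{F}\;=\;\{\alpha_{v_b}\}\cup\{\alpha_{v_d}\colon v_d\in N_{\mathcal{M}_{i-1}\setminus e}(v_s)\}\subseteq\mathbb{Z}_p.
\]
Since $|\mathcal{F}|\leq\Delta$ and $p>2\Delta^2 n$, such a $\gamma'_{v_s}$ exists in abundance. It is exactly here that the design of $\mathbf{Q}_{i-1}$, which deliberately omits the color-restricting factor $\prod_{l=t+1}^p(v_s-l)$, becomes essential: $\gamma'_{v_s}$ is permitted to sit outside $\mathcal{K}$.

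A routine factor-by-factor check then shows $\mathbf{Q}_{i-1}(\boldsymbol{\gamma}')\not\equiv 0\bmod p$: the edge factors $(v_c-v_d)$ not involving $v_s$ survive because $\boldsymbol{\alpha}$ properly colors $\mathcal{M}_{i-1}\setminus e$; the edge factors $(v_s-v_d)$ for $v_d\in N_{\mathcal{M}_{i-1}\setminus e}(v_s)$, as well as the distinguished factor $(v_s-v_b)$, survive by the construction $\gamma'_{v_s}\notin\mathcal{F}$; and the color-restricting products $\prod_{l=t+1}^p(v_c-l)$ at each $c\neq s$ survive because $\gamma'_{v_c}=\alpha_{v_c}\in\mathcal{K}$ is disjoint from $\mathbb{Z}_p\setminus\mathcal{K}$.

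I do not expect a genuine obstacle at this step. Conceptually, the working assumption $\mathbf{S}'_{i-1}\equiv 0\bmod p$ forces every $t$-coloring of $\mathcal{M}_{i-1}\setminus e$ to assign $v_s$ and $v_b$ the same color, so one cannot satisfy $(v_s-v_b)\neq 0$ while keeping $\gamma'_{v_s}\in\mathcal{K}$; Claim \ref{cl4} is essentially the elementary observation that once $\gamma'_{v_s}$ is allowed to range over the large field $\mathbb{Z}_p$, a simple pigeonhole argument suffices. The substantive difficulty lies downstream in Claim \ref{cl3} and the subsequent corollaries, where this mere existence statement has to be upgraded into a genuine contradiction with Hypothesis \ref{h1}.
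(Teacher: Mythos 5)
Your proof is correct and follows essentially the same route as the paper: take the proper $t$-coloring of $\mathcal{M}_{i-1}\setminus e$ supplied by Remark \ref{rm1} (equivalently, the coloring of $\mathcal{M}_i$ lifted via Lemmas \ref{l1}/\ref{l2}), keep the $\mathcal{K}$-valued colors on all vertices other than $v_s$, and exploit the deliberate omission of $\prod_{l=t+1}^p(v_s-l)$ from $\mathbf{Q}_{i-1}$ to reassign $v_s$ a value in $\mathbb{Z}_p$ avoiding the colors of $v_b$ and of the neighbors of $v_s$. The only difference is that you spell out the pigeonhole step (the forbidden set is tiny compared with $p$) and verify nonvanishing factor by factor, whereas the paper merely asserts that the freedom to choose $v_s$'s value outside $\mathcal{K}$ suffices; your version is a more careful rendering of the same argument, not a different one.
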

\begin{proof}
We know that $\mathbf{P'}_i(\boldsymbol{v}) \not\equiv 0$ mod $p$, that is, there exists an $\boldsymbol{\alpha}$ $\alpha_{v_j} \in \mathcal{K}$ $(1\leq j \leq n)$ such that $\mathbf{P}_{i}(\boldsymbol{\alpha})\not\equiv 0$ mod $p$. Then the mapping 
$\ell(v_j) = \alpha_{v_j}$ ($1 \leq j \leq n$) will color the vertices of the simple graph $\mathcal{M}_{i}$. This implies we can use the same colors $\{\ell(v_j) = \alpha_j: v_j \in V(\mathcal{M}_i)\}$ to color all the vertices in $\mathcal{M}_{i-1}$ except the vertex $v_s$.

 The claim follows if we are able to color just the vertex $v_s$. And this is possible in the light of the absence of the products $\prod_{l = t+1}^p(v_s-l)$ in $\mathbf{Q}_{i-1}(\boldsymbol{v})$, that is, we have no restriction that the color of the vertex $v_s$ has to be chosen from the $t$-color set $\mathcal{K}$ while coloring the vertex $v_s$ in the simple graph $\mathcal{M}_{i-1}$. Therefore, there exists $(\gamma'_{v_1}, \gamma'_{v_2}, \ldots, \gamma'_{v_n})$ $\gamma'_{v_j} (j \neq s) \in \mathcal{K}$ $(1\leq j\leq n)$ and $\gamma'_{v_s} \in \mathbb{Z}_p$ such that $\mathbf{Q}_{i-1}(\gamma'_{v_1}, \gamma'_{v_2}, \ldots, \gamma'_{v_n})\not\equiv 0$ mod $p$. 	
\end{proof}

\begin{REMARK}
	In this remark, we will understand the reason when $\mathbf{S'}_{i-1}(\boldsymbol{v}) \equiv 0$ mod $p$ $($congruence  relation $($\ref{eq21}$))$.
	
	 From Claim \ref{cl4} we have that $\mathbf{Q'}_{i-1}(\boldsymbol{v}) \not\equiv 0$ mod $p$, is nothing but, 
	\begin{align*}
	\mathbf{Q}_{i-1}(\boldsymbol{v}) = \prod_{v_c \in V(\mathcal{M}_{i-1}\setminus e)}\Big(\prod_{v_d \in N_{\mathcal{M}_{i-1}\setminus e}(v_c)}(v_c - v_d)\prod_{\substack{l = t+1\\  \text{if}\ c \neq s}}^p(v_c - l)\Big)(v_s-v_b) \not\equiv 0\ mod\ p.
	\end{align*}
	And $\mathbf{Q'}_{i-1}(\boldsymbol{v})$ can be written as follows $($after applying Fermat's theorem to each $v_r$ $(\substack{1 \leq r \leq n \\r\neq s})$ in $\mathbf{Q}_{i-1}(\boldsymbol{v})$$)$,
	\begin{equation}
		\label{req1} 
		\mathbf{Q'}_{i-1}(\boldsymbol{v}) \equiv \sum_{\prod_{\substack{r=1\\r\neq s}}^nl_r}\mathcal{C}_{\prod_{\substack{r=1\\r\neq s}}^nl_r}(v_s)\prod_{\substack{r = 1\\r \neq s}}^{n}{v_r}^{l_r},
	\end{equation}
	where $\mathcal{C}_{\prod_{\substack{r=1\\r\neq s}}^nl_r}(v_s)$ $($it is either a univariate polynomial in $v_s$ or a constant$)$ is the coefficient of $\prod_{\substack{r = 1\\r \neq s}}^{n}{v_r}^{l_r}$, the exponent of each $v_r$ $(\substack{1 \leq r \leq n \\r\neq s})$ and $v_s$ is $\leq p-1$.\\
	By the definition of $	\mathbf{S}_{i-1}(\boldsymbol{v})$, we have, 
	\begin{align*}
	\mathbf{S}_{i-1}(\boldsymbol{v}) &= \mathbf{Q}_{i-1}(\boldsymbol{v})\prod_{l = t+1}^p(v_s-l) \equiv 0\ \text{mod}\ p,
	\end{align*}
	$\mathbf{S'}_{i-1}(\boldsymbol{v})$ can also be written as follows using relation $(\ref{req1})$,
	\begin{equation*}
		\mathbf{S'}_{i-1}(\boldsymbol{v}) \equiv \Big(\sum_{\prod_{\substack{r=1\\r\neq s}}^nl_r}\mathcal{C}_{\prod_{\substack{r=1\\r\neq s}}^nl_r}(v_s)\prod_{\substack{r = 1\\r \neq s}}^{n}{v_r}^{l_r}\Big)\prod_{l = t+1}^p(v_s-l),
	\end{equation*}
	\begin{equation*}
		\mathbf{S'}_{i-1}(\boldsymbol{v}) \equiv \sum_{\prod_{\substack{r=1\\r\neq s}}^nl_r}\big(\mathcal{C}_{\prod_{\substack{r=1\\r\neq s}}^nl_r}(v_s)\prod_{l = t+1}^p(v_s-l)\big)\prod_{\substack{r = 1\\r \neq s}}^{n}{v_r}^{l_r},
	\end{equation*}
	where $\mathcal{C}_{\prod_{\substack{r=1\\r\neq s}}^nl_r}(v_s)$ is the coefficient of $\prod_{\substack{r = 1\\r \neq s}}^{n}{v_r}^{l_r}$, the exponent of each $v_r$ $(\substack{1 \leq r \leq n \\r\neq s})$ is $\leq p-1$, and as Fermat's theorem has not been applied to $v_s$ yet, the exponent of $v_s$ is $\leq p+2\Delta-t$.\\
	Further, $\mathbf{S'}_{i-1}(\boldsymbol{v})$ can also be written as follows,
	\begin{equation*}
		\mathbf{S'}_{i-1}(\boldsymbol{v}) \equiv \sum_{\prod_{\substack{r=1\\r\neq s}}^nl_r}\mathcal{D}_{\prod_{\substack{r=1\\r\neq s}}^nl_r}(v_s)\prod_{\substack{r = 1\\r \neq s}}^{n}{v_r}^{l_r},
	\end{equation*}
	where $\mathcal{D}_{\prod_{\substack{r=1\\r\neq s}}^nl_r}(v_s) = \mathcal{C}_{\prod_{\substack{r=1\\r\neq s}}^nl_r}(v_s)\prod_{l = t+1}^p(v_s-l)$ $($it is a univariate polynomial in $v_s$$)$.\\
	We know that $\mathcal{D'}_{\prod_{\substack{r=1\\r\neq s}}^nl_r}(v_s)$ is a polynomial obtained after applying Fermat's theorem to the univariate polynomial $\mathcal{D}_{\prod_{\substack{r=1\\r\neq s}}^nl_r}(v_s)$, using this fact, $\mathbf{S'}_{i-1}(\boldsymbol{v})$ can also be written as follows,
	\begin{equation}
		\label{eqt1}
		\mathbf{S'}_{i-1}(\boldsymbol{v}) \equiv \sum_{\prod_{\substack{r=1\\r\neq s}}^nl_r}\mathcal{D'}_{\prod_{\substack{r=1\\r\neq s}}^nl_r}(v_s)\prod_{\substack{r = 1\\r \neq s}}^{n}{v_r}^{l_r}
	\end{equation}
	Here, we must note that the only possible reason for $\mathbf{S'}_{i-1}(\boldsymbol{v}) \equiv 0 $ mod $p$ from the relation $(\ref{eqt1})$ is that each $\mathcal{D'}_{\prod_{\substack{r=1\\r\neq s}}^nl_r}(v_s) \equiv 0$ mod $p$, that is,
	\begin{equation*}
		\mathbf{S'}_{i-1}(\boldsymbol{v}) \equiv \sum_{\prod_{\substack{r=1\\r\neq s}}^nl_r}\underbrace{\mathcal{D'}_{\prod_{\substack{r=1\\r\neq s}}^nl_r}(v_s)}_{\equiv 0\ \text{mod}\ p}\prod_{\substack{r = 1\\r \neq s}}^{n}{v_r}^{l_r} \equiv 0\ \text{mod}\ p.
	\end{equation*}
	Otherwise, suppose there exists a monomial $\prod_{\substack{r = 1\\r \neq s}}^{n}{v_r}^{l'_r}$ such that its coefficient $\mathcal{D'}_{\prod_{\substack{r=1\\r\neq s}}^nl'_r}(v_s) \not\equiv 0$ mod $p$ in the relation $(\ref{eqt1})$, that is,
	\begin{equation*}
		\mathbf{S'}_{i-1}(\boldsymbol{v}) \equiv \underbrace{\mathcal{D'}_{\prod_{\substack{r=1\\r\neq s}}^nl'_r}(v_s)}_{\not\equiv\ 0\ \text{mod}\ p}\prod_{\substack{r = 1\\r \neq s}}^{n}{v_r}^{l'_r} + \sum_{\prod_{\substack{r=1\\r\neq s}}^nl_r}\mathcal{D'}_{\prod_{\substack{r=1\\r\neq s}}^nl_r}(v_s)\prod_{\substack{r = 1\\r \neq s}}^{n}{v_r}^{l_r},
	\end{equation*}
	so we have that $\mathbf{S'}_{i-1}(\boldsymbol{v}) \not\equiv 0$ mod $p$, to each variable $v_1$, $v_2$, \ldots, $v_n$ in $\mathbf{S'}_{i-1}(\boldsymbol{v})$ we associate the sets $\mathcal{A}_1 = \mathbb{Z}_p,  \mathcal{A}_{2} = \mathbb{Z}_p, \ldots,$ $\mathcal{A}_{n} = \mathbb{Z}_p$ respectively. Then there exists $\gamma_{v_1} \in \mathcal{A}_1, \gamma_{v_2} \in \mathcal{A}_{2}, \ldots,$ $\gamma_{v_n} \in \mathcal{A}_{n}$ such that
	
	\begin{equation*}
		\mathbf{S'}_{i-1}(\gamma_{v_1}, \gamma_{v_2}, \ldots, \gamma_{v_n}) \not\equiv 0\ \textnormal{mod}\ p,
	\end{equation*}
	which implies that Hypothesis \ref{h1} is false.\\
\end{REMARK}

In order to prove that Hypothesis \ref{h1} is false, our strategy is to find a new color set $\mathcal{K}' = \{1, 2, \ldots, t-1\} \cup \{\beta\}$, where $\beta \in \mathbb{Z}_p\setminus\{0, 1, 2, \ldots t-1, t\}$ such that the following Claim \ref{cl3} is true. Later, we use Claim \ref{cl3} to prove that Hypothesis \ref{h1} is false in Corollary \ref{cor2}. We are defining the polynomial $\hat{\mathbf{S}}_{i-1}(\boldsymbol{v})$ as follows, which will be used in the following claim,
\begin{align*}
\hat{\mathbf{S}}_{i-1}(\boldsymbol{v}) &= \prod_{v_c \in V(\mathcal{M}_{i-1}\setminus \{e=v_sv_b\})}\Big(\prod_{v_d \in N_{\mathcal{M}_{i-1}\setminus e}(v_c)}(v_c - v_d)\prod_{l \in \mathbb{Z}_p\setminus\mathcal{K}'}(v_c - l)\Big)(v_s-v_b).
\end{align*}	
\begin{CLAIM}
	\label{cl3}
There exists a $(\beta'_{v_1}, \beta'_{v_2}, \ldots, \beta'_{v_n})$ $\beta'_{v_j} \in \mathcal{K}' = \{1, 2, \ldots, t-1\} \cup \{\beta\}$ $(1\leq j\leq n)$ such that $\hat{\mathbf{S}}_{i-1}(\beta'_{v_1}, \beta'_{v_2}, \ldots, \beta'_{v_n})\not\equiv 0$ mod $p$, that is,
\begin{align*}
\hat{\mathbf{S}}_{i-1}(\boldsymbol{v}) &= \prod_{v_c \in V(\mathcal{M}_{i-1}\setminus \{e=v_sv_b\})}\Big(\prod_{v_d \in N_{\mathcal{M}_{i-1}\setminus e}(v_c)}(v_c - v_d)\prod_{l \in \mathbb{Z}_p\setminus\mathcal{K}'}(v_c - l)\Big)(v_s-v_b)\ \not\equiv 0\ \text{mod}\ p.
\end{align*}	
\end{CLAIM}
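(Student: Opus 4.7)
The plan is to combine Claim \ref{cl4} with the large cardinality of $\mathbb{Z}_p$ guaranteed by the assumption $p > 2\Delta^2 n$. By Claim \ref{cl4}, there exist $\gamma'_{v_j} \in \mathcal{K}$ for $j \neq s$ and $\gamma'_{v_s} \in \mathbb{Z}_p$ such that $\mathbf{Q}_{i-1}(\boldsymbol{\gamma'}) \not\equiv 0$ mod $p$. The non-vanishing of the factors $\prod_{v_d \in N_{\mathcal{M}_{i-1}\setminus e}(v_s)}(v_s - v_d)$ and $(v_s-v_b)$ at this point forces $\gamma'_{v_s}$ to differ from the colors of all neighbors of $v_s$ in $\mathcal{M}_{i-1}$, so this assignment is already a proper coloring of $\mathcal{M}_{i-1}$, albeit one in which $\gamma'_{v_s}$ might lie outside $\mathcal{K}$.

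Next I would choose $\gamma'_{v_s}$ specifically outside $\{0,1,\ldots,t\}$. This is possible because $v_s$ has at most $\Delta$ neighbors and hence at most $\Delta$ forbidden values in $\mathbb{Z}_p$, while $p-\Delta-t-1$ is large and positive. Define $\beta := \gamma'_{v_s}$, so that $\beta \in \mathbb{Z}_p \setminus \{0,1,\ldots,t\}$ and $v_s$ is colored by $\beta \in \mathcal{K}'$. With this choice the assignment agrees with $\mathcal{K}'$ on $v_s$, and the only remaining discrepancy is that some vertices $v_j$ with $j \neq s$ may satisfy $\gamma'_{v_j}=t$, a value not in $\mathcal{K}'$.

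The final step is to recolor the set $T := \{v_j : j \neq s,\ \gamma'_{v_j}=t\}$ using values from $\mathcal{K}' = \{1, \ldots, t-1, \beta\}$, while preserving properness and keeping $v_s$ at color $\beta$. Because the original coloring is proper, $T$ is independent in $\mathcal{M}_{i-1}$. For each $v_j \in T$ its $\mathcal{M}_{i-1}$-neighbors occupy at most $t-1$ of the colors in $\{1,\ldots,t-1,\beta\}$, so Lemma \ref{l6} applied to the induced subgraph $\mathcal{M}_{i-1}[[T \cup \bigcup_{v \in T} N_{\mathcal{M}_{i-1}}(v)]]_t$, together with the fact that $\mathcal{M}_{i-1}$ has no $K_{t+1}$ minor (because $\mathcal{M}=K_t$), produces the required recoloring inside $\mathcal{K}'$. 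Combining this recoloring with $v_s=\beta$ yields a tuple $(\beta'_{v_1},\ldots,\beta'_{v_n}) \in (\mathcal{K}')^n$ at which every factor of $\hat{\mathbf{S}}_{i-1}$ is non-zero, giving the desired conclusion.

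The principal obstacle will be the recoloring step: a vertex $v_j \in T$ that is adjacent to $v_s$ and whose other $\mathcal{M}_{i-1}$-neighbors already span $\{1,\ldots,t-1\}$ cannot be recolored locally, forcing a Kempe-chain-like cascade. Ensuring that this cascade terminates without exiting $\mathcal{K}'$ is exactly where the hypothesis that $\mathcal{M}_{i-1}$ is $K_{t+1}$-minor-free, encapsulated in Lemma \ref{l6}, must be invoked. Identifying the correct subset $Y$ so that Lemma \ref{l6} applies to $\mathcal{M}_{i-1}[[Y]]_t$, and verifying that the resulting partial recoloring extends consistently to the whole graph, is the delicate combinatorial heart of the argument.
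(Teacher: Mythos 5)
Your proposal does not follow the paper's route, and more importantly it has a genuine gap that you yourself identify but do not close. The paper's proof is entirely algebraic: it introduces the auxiliary polynomial $\mathbf{K}(\boldsymbol{v})$, shows $\mathbf{K'}(\boldsymbol{v})\not\equiv 0$ mod $p$ by factoring it into a product over $M_2$ and a sum over the remaining variables, and then uses Theorem \ref{th3} to recognize $\mathbf{K}(\boldsymbol{v})$ as being congruent to $\hat{\mathbf{S}}_{i-1}(\boldsymbol{v})$. Crucially, the $\beta$ there is not arbitrary: Theorem \ref{th1} selects $\beta$ so that for every $v_c\in M_2$ the factor $(v_c-\beta)$ divides $\mathbf{G'}(\boldsymbol{v})$ exactly once and $(v_s-\beta)$ does not divide $\mathbf{G'}(\boldsymbol{v})$ at all. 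This square-freeness is what makes the substitution in Theorem \ref{th3} work. In your proposal, $\beta$ is just whatever value $\gamma'_{v_s}$ happens to take in Claim \ref{cl4}; it carries none of this structure, and nothing in your argument connects your $\beta$ to the factorization of $\mathbf{G'}(\boldsymbol{v})$ on which the paper's manipulations rest.

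The deeper problem is the recoloring step, which you correctly flag as ``the delicate combinatorial heart'' but then leave entirely unproved. After you set $v_s$ to $\beta$ and look at the set $T$ of vertices colored $t$, a vertex $v_j\in T$ adjacent both to $v_s$ and to $t-1$ vertices exhausting $\{1,\dots,t-1\}$ genuinely cannot be recolored locally, and the cascade you describe is not controlled by Lemma \ref{l6}: that lemma only applies to induced subgraphs $\mathcal{M}_{i-1}[[Y]]_t$ whose \emph{every} vertex has degree at most $t$, which the relevant neighborhood union will generally violate. More fundamentally, exhibiting a proper $\mathcal{K}'$-coloring of the arbitrary $K_{t+1}$-minor-free graph $\mathcal{M}_{i-1}$ with $|\mathcal{K}'|=t$ colors is precisely the statement of Hadwiger's conjecture applied to $\mathcal{M}_{i-1}$, so your recoloring plan is circular: it assumes the very thing being proved. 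Brooks' theorem (Lemma \ref{l6}) is famously not enough to derive Hadwiger, and nothing else in your proposal supplies the missing leverage. The paper sidesteps exactly this issue by routing the argument through the polynomial identity of Theorem \ref{th3} rather than through any graph-theoretic recoloring.
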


Now, we will focus on finding the new color set $\mathcal{K}' = \{1, 2, \ldots, t-1\} \cup \{\beta\}$ where $\beta \in \mathbb{Z}_p\setminus\{0, 1, 2, \ldots t-1, t\}$, using the polynomials $\mathbf{J}_{\prod_{\substack{r=1\\r\neq c}}^nl'_r}(v_c)$ (defined later). To define $\mathbf{J}_{\prod_{\substack{r=1\\r\neq c}}^nl'_r}(v_c)$ we need to define the polynomials $\mathbf{G}(\boldsymbol{v})$ and $\mathbf{H}_{v_c}(\boldsymbol{v})$ as follows, we use the fact that $\mathbf{Q'}_{i-1}(\boldsymbol{v}) \not\equiv 0$ mod $p$ (congruence relation (\ref{eq20}), Claim \ref{cl4}), that is, 
\begin{align}
\label{m7}
\mathbf{Q}_{i-1}(\boldsymbol{v}) = \prod_{v_c \in V(\mathcal{M}_{i-1}\setminus e)}\Big(\prod_{v_d \in N_{\mathcal{M}_{i-1}\setminus e}(v_c)}(v_c - v_d)\prod_{\substack{l = t+1\\  \text{if}\ c \neq s}}^p(v_c - l)\Big)(v_s-v_b) \not\equiv 0\ \text{mod}\ p.
\end{align}
\begin{equation*}
\mathbf{G}(\boldsymbol{v}) = \mathbf{Q}_{i-1}(\boldsymbol{v})\prod_{j=1}^{k}(v_{l_j}-t),
\end{equation*}
where $v_{l_1}, v_{l_2}, \ldots, v_{l_k} \in V(\mathcal{M}_{i-1}\setminus e)\setminus\{v_s\}$ $(1 \leq l_1, l_2, \ldots, l_k \leq n)$ such that 
$\mathbf{G'}(\boldsymbol{v}) \not\equiv 0$ mod $p$ and for every $v_c \in V(\mathcal{M}_{i-1}\setminus e)\setminus\{v_s, v_{l_1}, v_{l_2}, \ldots, v_{l_k}\}$ the polynomial $\mathbf{H'}_{v_c}(\boldsymbol{v}) \equiv 0$ mod $p$, where $\mathbf{H}_{v_c}(\boldsymbol{v})$ is defined as follows
\begin{equation*}
\mathbf{H}_{v_c}(\boldsymbol{v}) = \mathbf{G'}(\boldsymbol{v})(v_c-t),\\
\end{equation*}
note that the exponent of $v_c$ is $\leq p$ and the exponent of each $v_r$ ($\substack{1 \leq r \leq n \\r\neq c})$ is $\leq p-1$ in $\mathbf{H}_{v_c}(\boldsymbol{v})$.

\begin{REMARK}
	In this remark, we understand the case  $V(\mathcal{M}_{i-1}\setminus e)\setminus\{v_s, v_{l_1}, v_{l_2}, \ldots, v_{l_k}\} = \emptyset$, where $v_{l_1}, v_{l_2}, \ldots, v_{l_k} \in V(\mathcal{M}_{i-1}\setminus e)\setminus\{v_s\}$ $(1 \leq l_1, l_2, \ldots, l_k \leq n)$ such that 
	$\mathbf{G'}(\boldsymbol{v}) \not\equiv 0$ mod $p$.	
	We have that $\mathbf{G'}(\boldsymbol{v}) \not\equiv 0$ mod $p$, to each variable $v_1$, $v_2$, \ldots, $v_n$ in $\mathbf{G'}(\boldsymbol{v})$ we associate the sets $\mathcal{A}_1 = \mathbb{Z}_p,  \mathcal{A}_{2} = \mathbb{Z}_p, \ldots,$ $\mathcal{A}_{n} = \mathbb{Z}_p$ respectively. Then there exists $\gamma_{v_1} \in \mathcal{A}_1, \gamma_{v_2} \in \mathcal{A}_{2}, \ldots,$ $\gamma_{v_n} \in \mathcal{A}_{n}$ such that $\mathbf{G}(\gamma_{v_1}, \gamma_{v_2}, \ldots, \gamma_{v_n})\not\equiv 0$ mod $p$. Therefore, the mapping $\ell(v_j)=\gamma_{v_j}$ $($$\substack{1 \leq j\leq n\\ j\neq s}$$)$ defines the coloring of vertices in $V(\mathcal{M}_{i-1})\setminus\{v_s\}$ with only $($$t-1$$)$-colors. And we have the option to color the vertex $v_s$ using the color $t$, which gives the \textit{vertex coloring} of the graph $\mathcal{M}_{i-1}$ using $t$-colors. This implies that Hypothesis \ref{h1} is not true.
\end{REMARK}

Our novel approach involves finding a $\beta$ $(\notin \{0, 1, 2, \ldots, t-1, t\}$) such that for each $v_c \in V(\mathcal{M}_{i-1}\setminus e)\setminus\{v_s, v_{l_1}, v_{l_2}, \ldots, v_{l_k}\}$, $(v_c-\beta)$ is a  square-free factor in $\mathbf{J}_{\prod_{\substack{r=1\\r\neq c}}^nl'_r}(v_c)$ (defined later) and $(v_c-\beta)$ is a  square-free factor in $\mathbf{G'}(\boldsymbol{v})$ as well (that is, $(v_c-\beta)$ divides $\mathbf{J}_{\prod_{\substack{r=1\\r\neq c}}^nl'_r}(v_c)$ but $(v_c-\beta)^2$ does not divide $\mathbf{J}_{\prod_{\substack{r=1\\r\neq c}}^nl'_r}(v_c)$, and this follows from Theorem \ref{th1} stated later). And for each $v_c \in V(\mathcal{M}_{i-1}\setminus e)\setminus\{v_s, v_{l_1}, v_{l_2}, \ldots, v_{l_k}\}$, the square-free factor $(v_c-\beta)$ in $\mathbf{G'}(\boldsymbol{v})$ is replaced by $\big(v_c- t\big)$ to obtain the desired result (this follows from Theorem \ref{th3} stated later). For the sake of notational simplicity, let $M_1 = \{v_{l_1}, v_{l_2}, \ldots, v_{l_k}\}$ and $M_2 = V(\mathcal{M}_{i-1}\setminus e)\setminus\{v_s, v_{l_1}, v_{l_2}, \ldots, v_{l_k}\}$ $(1 \leq l_1, l_2, \ldots, l_k \leq n)$, so $V(\mathcal{M}_{i-1}\setminus e) = M_1 \cup M_2 \cup \{v_s\}$. To choose a new value $\beta$, we are defining polynomial $\mathbf{J}_{\prod_{\substack{r=1\\r\neq c}}^nl'_r}(v_c)$ as follows.\\ 

By the definition of $\mathbf{G}(\boldsymbol{v})$ and $\mathbf{Q}_{i-1}(\boldsymbol{v})$, $\mathbf{G}(\boldsymbol{v})$ can be written as follows,
	\begin{multline}
	\label{m8}
	\mathbf{G}(\boldsymbol{v}) = \prod_{v_c \in V(\mathcal{M}_{i-1}\setminus e)}\Big(\prod_{v_d \in N_{\mathcal{M}_{i-1}\setminus e}(v_c)}(v_c - v_d)\prod_{\substack{l = t+1\\  \text{if}\ c \neq s}}^p(v_c - l)\Big)(v_s-v_b)\prod_{j=1}^{k}(v_{l_j}-t).
	\end{multline}

Given a $v_c \in M_2$, the polynomial $\mathbf{G'}(\boldsymbol{v})$ can be written as (Fermat's theorem is applied to each $v_r$ ($1 \leq r \leq n $)),
\begin{equation}
\label{eq11}
\mathbf{G'}(\boldsymbol{v}) \equiv \sum_{\prod_{\substack{r=1\\r\neq c}}^nl_r}\mathcal{C}_{\prod_{\substack{r=1\\r\neq c}}^nl_r}(v_c)\prod_{\substack{r = 1\\r\neq c}}^n{v_r}^{l_r},
\end{equation}
where $\mathcal{C}_{\prod_{\substack{r=1\\r\neq c}}^nl_r}(v_c)$ is the coefficient of $\prod_{\substack{r = 1\\r\neq c}}^n{v_r}^{l_r}$ and is a univariate polynomial in $v_c$, the exponent of each $v_r$ $(1 \leq r \leq n)$ is $\leq p-1$.\\
Also, we can observe that, given a $v_c \in M_2$, from the relation (\ref{m8}) 
$\mathbf{G'}(\boldsymbol{v})$
can also be written as (excepting $v_c$, Fermat's theorem is applied to each $v_r$ ($\substack{1 \leq r \leq n \\r\neq c})$), 
\begin{equation}
\label{m3}
\mathbf{G'}(\boldsymbol{v}) \equiv  \sum_{\prod_{\substack{r=1\\r\neq c}}^nl_r}\Big(\mathcal{C}^c_{\prod_{\substack{r=1\\r\neq c}}^nl_r}(v_c)\prod_{l=t+1}^p(v_c-l)\Big)\prod_{\substack{r = 1\\r\neq c}}^n{v_r}^{l_r},
\end{equation}
where $0 \leq l_r \leq p-1$, as Fermat's theorem is applied to each $v_r\ (\substack{1 \leq r \leq n \\r\neq c})$ except $v_c$, $\mathcal{C}^c_{\prod_{\substack{r=1\\r\neq c}}^nl_r}(v_c)$ is a univariate polynomial of $v_c$ and exponent of $v_c$ (in the congruence relation (\ref{m3})) in the following products, $$\Big(\mathcal{C}^c_{\prod_{\substack{r=1\\r\neq c}}^nl_r}(v_c)\prod_{l=t+1}^p(v_c-l)\Big)\ \textnormal{is}\ \leq 2\Delta+p-t.$$\\

The most important fact we observe from relation (\ref{eq11}) and relation (\ref{m3}) is that, on applying Fermat's theorem to $v_c$, 
\begin{equation}
\label{eq12}
\mathcal{C}^c_{\prod_{\substack{r=1\\r\neq c}}^nl_r}(v_c)\prod_{l=t+1}^p(v_c-l) \equiv \mathcal{C}_{\prod_{\substack{r=1\\r\neq c}}^nl_r}(v_c),
\end{equation}
as both are coefficient of $\prod_{\substack{r = 1\\r\neq c}}^n{v_r}^{l_r}$.\\ \\

Now, for each $v_c \in M_2$, we define polynomial $\mathbf{J}_{\prod_{\substack{r=1\\r\neq c}}^nl'_r}(v_c)$ using a non-zero coefficient of some monomial $\prod_{\substack{r = 1\\r\neq c}}^n{v_r}^{l'_r}$ in the congruence relation (\ref{m3}) as follows,\\ \\

for each $v_c \in M_2$,
\begin{equation}
\label{eq3}
\mathbf{J}_{\prod_{\substack{r=1\\r\neq c}}^nl'_r}(v_c) = \mathcal{C}^c_{\prod_{\substack{r=1\\r\neq c}}^nl'_r}(v_c)\prod_{l=t+1}^p(v_c-l),
\end{equation}
where, $\mathcal{C}^c_{\prod_{\substack{r=1\\r\neq c}}^nl'_r}(v_c)\prod_{l=t+1}^p(v_c-l)$ is the coefficient 
of some monomial $\prod_{\substack{r=1\\r\neq c}}^nv_r^{l'_r}$ in the congruence relation (\ref{m3}).\\
And $\mathbf{J}_{\prod_{\substack{r=1\\r\neq s}}^nl'_r}(v_s)$ is defined below,
\begin{equation}
\label{eq10}
\mathbf{J}_{\prod_{\substack{r=1\\r\neq s}}^nl'_r}(v_s) = \mathcal{C}^s_{\prod_{\substack{r=1\\r\neq s}}^nl'_r}(v_s),
\end{equation}
where, $\mathcal{C}^s_{\prod_{\substack{r=1\\r\neq s}}^nl'_r}(v_s)$ is the coefficient 
of some monomial $\prod_{\substack{r=1\\r\neq s}}^nv_r^{l'_r}$ in the congruence relation (\ref{m3}), exponent of $v_s$ in $\mathbf{J}_{\prod_{\substack{r=1\\r\neq s}}^nl'_r}(v_s)$ is $\leq 2\Delta$.\\

Now, with the help of the polynomials $\{\mathbf{J}_{\prod_{\substack{r=1\\r\neq c}}^nl'_r}(v_c):v_c \in M_2\} \cup \{\mathbf{J}_{\prod_{\substack{r=1\\r\neq s}}^nl'_r}(v_s)\}$ (as defined in (\ref{eq3}) and (\ref{eq10})), we will find  $\beta$ using the following theorems,

\begin{THEOREM}\cite{sri}
	\label{th2}
	Given a $v_c \in M_2$, $\mathbf{H}_{v_c}(\boldsymbol{v}) = \mathbf{G'}(\boldsymbol{v})(v_c-t)$. For every $v_c \in M_2$, the polynomial $\mathbf{H'}_{v_c}(\boldsymbol{v}) \equiv 0$ mod $p$. Then  
	\begin{equation*}
	\mathbf{G'}(\boldsymbol{v}) \equiv \prod_{v_c \in M_2}\Big(\prod_{l \in \mathbb{Z}_p\setminus\{t\}}(v_c-l)\Big)\bigg(\sum_{\prod_{\substack{r=1\\r \notin K}}^nl_r}\mathcal{C}_{\prod_{\substack{r=1\\r \notin K}}^nl_r}(v_s)\prod_{\substack{r = 1\\r \notin K}}^n{v_r}^{l_r}\bigg),
	\end{equation*}
	where, $K = \{a: v_a \in M_2\} \cup \{s\}$ and $\mathcal{C}_{\prod_{\substack{r=1\\r \notin K}}^nl_r}(v_s)$ is a univariate polynomial in $v_s$.
\end{THEOREM}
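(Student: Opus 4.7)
The plan is to extract, for each $v_c \in M_2$ in turn, the explicit factor $\prod_{l \in \mathbb{Z}_p \setminus \{t\}}(v_c - l)$ out of $\mathbf{G}'(\boldsymbol{v})$, working inside the polynomial ring $\mathbb{Z}_p[\boldsymbol{v}]$ rather than only modulo Fermat. Fix $v_c \in M_2$ and regard $\mathbf{G}'(\boldsymbol{v})$ as a polynomial of $v_c$-degree at most $p-1$ with coefficients in $\mathbb{Z}_p[\boldsymbol{v}_{-c}]$,
\[
\mathbf{G}'(\boldsymbol{v}) \;=\; \sum_{j=0}^{p-1} \mathcal{C}_j(\boldsymbol{v}_{-c})\, v_c^{\,j},
\]
every $\mathcal{C}_j$ having all remaining exponents $\leq p-1$. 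I would then form $(v_c - t)\mathbf{G}'(\boldsymbol{v})$, shift the $v_c$-index, and reduce the single $v_c^{p}$-monomial via $v_c^{p} \equiv v_c$; the hypothesis $\mathbf{H}'_{v_c}(\boldsymbol{v}) \equiv 0$ forces every coefficient of $v_c^{\,j}$ ($0 \leq j \leq p-1$) in the reduced form to vanish.

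Reading off these coefficient equations gives $\mathcal{C}_0 \equiv 0$, the linear recurrence $\mathcal{C}_{j-1} \equiv t\,\mathcal{C}_j$ for $j = 2, \ldots, p-1$, together with the ``wrap-around'' relation $\mathcal{C}_{p-1} + \mathcal{C}_0 - t\,\mathcal{C}_1 \equiv 0$ coming from the reduced $v_c^p$ monomial. Setting $A_c(\boldsymbol{v}_{-c}) := \mathcal{C}_{p-1}(\boldsymbol{v}_{-c})$, the recurrence solves uniquely to $\mathcal{C}_j \equiv A_c \cdot t^{\,p-1-j}$ for $j \geq 1$, and Fermat's theorem $t^{\,p-1} \equiv 1$ is exactly what makes the wrap-around relation automatic. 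Comparing with the explicit Lagrange-type expansion
\[
\prod_{l \in \mathbb{Z}_p \setminus \{t\}}(v_c - l) \;=\; \frac{v_c^{p} - v_c}{v_c - t} \;=\; \sum_{j=1}^{p-1} t^{\,p-1-j}\, v_c^{\,j},
\]
I conclude the genuine polynomial identity $\mathbf{G}'(\boldsymbol{v}) = A_c(\boldsymbol{v}_{-c}) \cdot \prod_{l \in \mathbb{Z}_p \setminus \{t\}}(v_c - l)$ in $\mathbb{Z}_p[\boldsymbol{v}]$.

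Now I iterate the above over $v_c \in M_2$. Since the extracted factor involves only $v_c$, the quotient $A_c(\boldsymbol{v}_{-c})$ still has all its remaining exponents $\leq p-1$; as $\mathbb{Z}_p[\boldsymbol{v}]$ is an integral domain, the hypothesis $\mathbf{H}'_{v_{c'}}(\boldsymbol{v}) \equiv 0$ for another $v_{c'} \in M_2$ passes cleanly to $A_c$, and the single-variable argument applies again. Processing each $v_c \in M_2$ in turn yields
\[
\mathbf{G}'(\boldsymbol{v}) \;\equiv\; \prod_{v_c \in M_2}\Big(\prod_{l \in \mathbb{Z}_p \setminus \{t\}}(v_c - l)\Big) \cdot B(\boldsymbol{v}_{-M_2}),
\]
where $B$ involves only the variables indexed outside $\{a : v_a \in M_2\}$, namely $v_s$ together with the $v_r$ for $r \notin K$. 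Regrouping $B$ as a polynomial in $\{v_r : r \notin K\}$ whose coefficients are univariate polynomials in $v_s$ gives precisely the stated form.

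The main obstacle is the first step: correctly deriving the coefficient-by-coefficient system produced by $\mathbf{H}'_{v_c} \equiv 0$ and showing that its solution space is exactly one-dimensional over $\mathbb{Z}_p[\boldsymbol{v}_{-c}]$, spanned by the coefficient sequence of $\prod_{l \neq t}(v_c - l)$. Once that Lagrange-style identification is established, the passage to the product over all $v_c \in M_2$ is routine because the factors live in pairwise disjoint variables and polynomial divisibility in $\mathbb{Z}_p[\boldsymbol{v}]$ (a UFD) is well-behaved.
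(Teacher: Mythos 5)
Your proposal is a correct, self-contained proof. Note that the present paper does not actually prove Theorem \ref{th2} itself---it is imported by citation from \cite{sri}---so there is no in-paper argument to compare against; but the route you take is the natural one. The single-variable step is right: writing $\mathbf{G}'(\boldsymbol{v}) = \sum_{j=0}^{p-1}\mathcal{C}_j(\boldsymbol{v}_{-c})\,v_c^{\,j}$, multiplying by $(v_c-t)$ and reducing the one overflow $v_c^{\,p}\mapsto v_c$ gives the coefficient system $-t\,\mathcal{C}_0\equiv 0$, $\mathcal{C}_{p-1}+\mathcal{C}_0-t\,\mathcal{C}_1\equiv 0$, and $\mathcal{C}_{j-1}\equiv t\,\mathcal{C}_j$ for $2\le j\le p-1$; since $t\not\equiv 0$ (because $p>2\Delta^2 n>t$), this forces $\mathcal{C}_0\equiv 0$ and $\mathcal{C}_j\equiv t^{\,p-1-j}\mathcal{C}_{p-1}$ for $j\ge 1$, with the wrap-around relation automatic via $t^{\,p-1}\equiv 1$. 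Your identity $\prod_{l\in\mathbb{Z}_p\setminus\{t\}}(v_c-l)=\sum_{j=1}^{p-1}t^{\,p-1-j}v_c^{\,j}$ is the correct Fermat-reduced long division of $v_c^{\,p}-v_c$ by $v_c-t$, and together these give $\mathbf{G}'=\mathcal{C}_{p-1}\cdot\prod_{l\neq t}(v_c-l)$.

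The only place you should slow down is the iteration. When you have extracted $f_{c_1}(v_{c_1}):=\prod_{l\neq t}(v_{c_1}-l)$ and pass to the cofactor $A_{c_1}$, the hypothesis for another $v_{c'}\in M_2$ is about $\mathbf{H}'_{v_{c'}}=\big[\mathbf{G}'\cdot(v_{c'}-t)\big]'$, not directly about $A_{c_1}$. You are implicitly using that $\big[f_{c_1}(v_{c_1})\cdot A_{c_1}\cdot(v_{c'}-t)\big]' = f_{c_1}(v_{c_1})\cdot\big[A_{c_1}(v_{c'}-t)\big]'$; this holds because $f_{c_1}$ involves only $v_{c_1}$ with exponent exactly $p-1$, while the reduction in forming $\mathbf{H}'_{v_{c'}}$ touches only $v_{c'}$, so no cross-terms occur. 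Then integral-domain cancellation of the nonzero factor $f_{c_1}$ pushes $\mathbf{H}'_{v_{c'}}\equiv 0$ down to $\big[A_{c_1}(v_{c'}-t)\big]'\equiv 0$, and the single-variable argument applies to $A_{c_1}$. Making this commuting-and-cancelling step explicit is the one gap in the write-up; with it, the final regrouping of the remaining cofactor in the variables indexed outside $K=\{a:v_a\in M_2\}\cup\{s\}$ as $\sum\mathcal{C}_{\ldots}(v_s)\prod_{r\notin K}v_r^{\,l_r}$ is immediate since those are precisely the surviving variables.
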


\begin{THEOREM}\cite{sri}
	\label{th1}
	There exists a $\beta \in \mathbb{Z}_p\setminus\{0, 1, 2, \ldots, t\}$ such that for each $v_c \in M_2$, $(v_c-\beta)$ divides $\mathbf{J}_{\prod_{\substack{r=1\\r\neq c}}^nl'_r}(v_c)$ but $(v_c-\beta)^2$ does not divide $\mathbf{J}_{\prod_{\substack{r=1\\r\neq c}}^nl'_r}(v_c)$. Moreover, for each $v_c \in M_2$, $(v_c-\beta)$ divides
	$\mathbf{G'}(\boldsymbol{v})$ 
	but $(v_c-\beta)^2$ does not divide $\mathbf{G'}(\boldsymbol{v})$. And also $(v_s-\beta)$ does not divide $\mathbf{J}_{\prod_{\substack{r=1\\r\neq s}}^nl'_r}(v_s)$ and $\mathbf{G'}(\boldsymbol{v})$ 
	as well.
\end{THEOREM}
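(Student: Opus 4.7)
The plan is to combine the explicit factorization of $\mathbf{G}'(\boldsymbol{v})$ supplied by Theorem \ref{th2} with a counting argument over $\mathbb{Z}_p$. Write $\Pi(v) := \prod_{l \in \mathbb{Z}_p \setminus \{t\}}(v - l)$ and $\Pi'(v) := \prod_{l=t+1}^{p}(v - l)$; since $p \equiv 0 \pmod p$, the simple roots of $\Pi'$ are exactly $\mathbb{Z}_p \setminus \mathcal{K} = \{0, t+1, \ldots, p-1\}$, and the simple roots of $\Pi$ are all of $\mathbb{Z}_p \setminus \{t\}$. Theorem \ref{th2} gives
\[
\mathbf{G}'(\boldsymbol{v}) \equiv \prod_{v_c \in M_2} \Pi(v_c) \cdot F\bigl(v_s, v_r : r \notin K\bigr) \pmod p,
\]
where $F$ contains none of the variables $v_c$ with $v_c \in M_2$. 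Definition (\ref{eq3}) gives $\mathbf{J}_{\prod l'_r}(v_c) = \mathcal{C}^c_{\prod l'_r}(v_c)\,\Pi'(v_c)$ with $\deg_{v_c} \mathcal{C}^c_{\prod l'_r} \leq 2\Delta$; the monomial $\prod_{r \neq c} v_r^{l'_r}$ is chosen so that its coefficient in (\ref{m3}) is nonzero, hence $\mathcal{C}^c_{\prod l'_r}(v_c) \not\equiv 0$, and analogously $\mathcal{C}^s_{\prod l'_r}(v_s) \not\equiv 0$ from (\ref{eq10}), with $\deg_{v_s} \mathcal{C}^s_{\prod l'_r} \leq 2\Delta$.

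Next I restrict candidate values to $\beta \in \{t+1, t+2, \ldots, p-1\}$, which automatically satisfies $\beta \notin \{0,1,\ldots,t\}$ and makes $\beta$ a simple root of both $\Pi$ and $\Pi'$. For any such $\beta$ and any $v_c \in M_2$, the factor $\Pi'(v_c)$ already supplies the linear factor $(v_c - \beta)$ in $\mathbf{J}_{\prod l'_r}(v_c)$, and double-divisibility by $(v_c - \beta)^2$ is ruled out precisely when $\mathcal{C}^c_{\prod l'_r}(\beta) \not\equiv 0$; similarly $\Pi(v_c)$ supplies $(v_c - \beta)$ in $\mathbf{G}'(\boldsymbol{v})$, and since $F$ has no dependence on $v_c$, $(v_c - \beta)^2$ can never divide $\mathbf{G}'(\boldsymbol{v})$. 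For the vertex $v_s$ the remaining two required conditions translate to $\mathcal{C}^s_{\prod l'_r}(\beta) \not\equiv 0$ (so $(v_s - \beta) \nmid \mathbf{J}_{\prod l'_r}(v_s)$) and $F\bigl(v_s = \beta, \cdot\bigr) \not\equiv 0$ as a polynomial in the remaining variables (so $(v_s - \beta) \nmid \mathbf{G}'(\boldsymbol{v})$).

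The argument will close with a counting estimate. The forbidden values of $\beta$ in the eligible interval $\{t+1,\ldots,p-1\}$ (of size $p-t-1$) are: at most $2\Delta$ roots of $\mathcal{C}^c_{\prod l'_r}(v_c)$ for each $v_c \in M_2$; at most $2\Delta$ roots of $\mathcal{C}^s_{\prod l'_r}(v_s)$; and at most $\deg_{v_s} F \leq 2\Delta+1$ roots in $\beta$ of any nonvanishing coefficient-polynomial of $F$ when $F$ is viewed as a polynomial in $v_s$ alone (the bound $2\Delta+1$ coming from the $v_s$-degree of the original $\mathbf{G}(\boldsymbol{v})$, which survives Fermat reduction). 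Thus at most $(|M_2|+2)(2\Delta+1) \leq (n+2)(2\Delta+1)$ values of $\beta$ are forbidden, which is strictly smaller than $p-t-1$ because $p > 2\Delta^2 n$ and $t \leq n$; hence an admissible $\beta$ exists. The main obstacle I anticipate is the bookkeeping of nonvanishing and degree bounds — specifically, verifying that each ``coefficient polynomial'' in play is genuinely nonzero (guaranteed by the deliberate choice of monomials with nonzero coefficients) and that Fermat reduction does not silently deflate the degrees used in the counting step.
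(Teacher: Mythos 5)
The paper itself does not supply a proof of Theorem~\ref{th1}; it simply cites~\cite{sri}, so there is no internal argument to compare you against. Judged on its own terms, your reconstruction is a reasonable and essentially sound route to the claim: you use the structural factorization of~$\mathbf{G'}(\boldsymbol{v})$ from Theorem~\ref{th2} to reduce the first two bullet points of the theorem to the nonvanishing of a fixed nonzero coefficient polynomial~$\mathcal{C}^c_{\prod l'_r}$ of bounded degree, you observe that for $\beta \in \{t+1,\ldots,p-1\}$ the factors $\Pi'(v_c)$ and $\Pi(v_c)$ each contribute $\beta$ as a simple root automatically, and you close with a union bound over the roots of the finitely many degree-$O(\Delta)$ univariate polynomials $\mathcal{C}^c$, $\mathcal{C}^s$, and a chosen nonzero $v_s$-coefficient of $F$. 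The key observations — that $\mathcal{C}^c \not\equiv 0$ because the monomial was \emph{chosen} with nonzero (Fermat-reduced) coefficient, that $F$ involves no $v_c$ for $v_c \in M_2$ so $(v_c-\beta)^2$ can never divide $\mathbf{G'}$, and that the $v_s$-degree of $\mathbf{G}$ (hence of $F$) is at most $2\Delta+1 < p$ so Fermat reduction cannot inflate the count of bad~$\beta$ — are all in order.

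Two small points deserve tightening. First, the inequality at the end, $(|M_2|+2)(2\Delta+1) < p - t - 1$, is the genuine load-bearing step and should be justified from $p > 2\Delta^2 n$ explicitly; it holds comfortably for $\Delta \ge 2$ (since $(n+2)(2\Delta+1) \le 3n\cdot 3\Delta = 9n\Delta < 2\Delta^2 n - n - 1$ once $\Delta \ge 5$, and a direct check covers $2 \le \Delta \le 4$), but for $\Delta = 1$ the inequality can fail and that degenerate case (graphs of maximum degree one) needs to be dispatched separately or excluded. Second, when you assert that double-divisibility of $\mathbf{J}_{\prod l'_r}(v_c)$ by $(v_c-\beta)^2$ is ruled out precisely when $\mathcal{C}^c_{\prod l'_r}(\beta)\not\equiv 0$, it is worth saying explicitly that this is because $\beta \in \{t+1,\ldots,p-1\}$ is a \emph{simple} root of $\Pi'(v_c) = \prod_{l=t+1}^p(v_c-l)$ (whose roots are $\{0,t+1,\ldots,p-1\}$, each with multiplicity one), so the multiplicity of $\beta$ in the product $\mathcal{C}^c \cdot \Pi'$ is exactly $1 + \mathrm{mult}_\beta(\mathcal{C}^c)$. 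With these clarifications the argument is complete, conditional — as the theorem itself is — on Theorem~\ref{th2}.
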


 \begin{THEOREM}\cite{sri}
	\label{th3}
	\begin{equation*}
	\frac{\mathbf{G}(\boldsymbol{v})}{\prod_{v_c \in M_2}\big(v_c-\beta\big)} \equiv \frac{\mathbf{G'}(\boldsymbol{v})}{\prod_{v_c \in M_2}\big(v_c-\beta\big)},
	\end{equation*}
	that is, 
	\begin{multline*}
	 \prod_{v_c \in M_1} \Big(\prod_{\substack{v_d \in N_{\mathcal{M}_{i-1}\setminus e}(v_c)}}(v_c - v_d)\prod_{l =t}^p(v_c - l)\Big) \\ \prod_{v_c \in M_2} \Big(\prod_{\substack{v_d \in N_{\mathcal{M}_{i-1}\setminus e}(v_c)}}(v_c - v_d)\prod_{l \in \mathbb{Z}_p\setminus\{1, 2, \ldots, t, \beta\}}(v_c - l)\Big) \prod_{\substack{v_d \in N_{\mathcal{M}_{i-1}\setminus e}(v_s)}}(v_s - v_d)\big(v_s - v_b \big) \equiv \\ \prod_{v_c \in M_2}\Big(\prod_{l \in \mathbb{Z}_p\setminus\{t, \beta\}}(v_c - l)\Big)\bigg(\sum_{\prod_{\substack{r=1\\r \notin K}}^nl_r}\mathcal{C}_{\prod_{\substack{r=1\\r \notin K}}^nl_r}(v_s)\prod_{\substack{r = 1\\r \notin K}}^n{v_r}^{l_r}\bigg),
	\end{multline*}
	where, $K = \{a: v_a \in M_2\} \cup \{s\}$ and $\mathcal{C}_{\prod_{\substack{r=1\\r \notin K}}^nl_r}(v_s)$ is a univariate polynomial in $v_s$.\\
\end{THEOREM}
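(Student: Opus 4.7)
I would verify the stated congruence by expanding both sides in closed form, starting from the explicit definitions and then invoking the congruence $\mathbf{G}(\boldsymbol{v}) \equiv \mathbf{G'}(\boldsymbol{v})$.

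For the left-hand side, I would start from relation~(\ref{m8}) and observe that, since Theorem~\ref{th1} places $\beta$ in $\mathbb{Z}_p \setminus \{0,1,\ldots,t\}$, the value $\beta$ already appears as an index in $\prod_{l=t+1}^{p}(v_c - l)$, which is a factor of $\mathbf{G}$ for every $v_c \neq v_s$. Hence $(v_c - \beta)$ is syntactically a factor of $\mathbf{G}$ for each $v_c \in M_2$, and cancelling it converts $\prod_{l=t+1}^{p}(v_c - l)$ into $\prod_{l \in \mathbb{Z}_p \setminus \{1,2,\ldots,t,\beta\}}(v_c - l)$. For $v_c \in M_1$, the extra factor $(v_c - t)$ coming from $\prod_{j=1}^{k}(v_{l_j} - t)$ merges with $\prod_{l=t+1}^{p}(v_c - l)$ to give $\prod_{l=t}^{p}(v_c - l)$, while the $v_s$-part of the product is untouched. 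Collecting these pieces reproduces the explicit LHS stated in the theorem.

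For the right-hand side, I would apply Theorem~\ref{th2} to write $\mathbf{G'}(\boldsymbol{v}) \equiv \prod_{v_c \in M_2}\prod_{l \in \mathbb{Z}_p \setminus \{t\}}(v_c - l) \cdot Q(\boldsymbol{v})$, where $Q(\boldsymbol{v}) = \sum \mathcal{C}_{\bullet}(v_s)\prod_{r \notin K} v_r^{l_r}$. Since $\beta \neq t$, the factor $(v_c - \beta)$ sits inside $\prod_{l \in \mathbb{Z}_p \setminus \{t\}}(v_c - l)$ for each $v_c \in M_2$; cancelling it leaves $\prod_{l \in \mathbb{Z}_p \setminus \{t,\beta\}}(v_c - l)$ and preserves $Q(\boldsymbol{v})$ intact, yielding the explicit RHS.

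The congruence between the two quotients then rests on the identity $\mathbf{G}(\boldsymbol{v}) \equiv \mathbf{G'}(\boldsymbol{v})$, which holds by construction (since $\mathbf{G'}$ is the Fermat reduction of $\mathbf{G}$), together with the observation that each linear factor $(v_c - \beta)$ with $v_c \in M_2$ divides the corresponding Fermat generator $v_c^p - v_c = \prod_{\gamma \in \mathbb{Z}_p}(v_c - \gamma)$. This is the step I expect to be the main obstacle: cancelling a common polynomial factor does not in general commute with reduction modulo an ideal, so one must verify that after dividing both $\mathbf{G}$ and $\mathbf{G'}$ by $\prod_{v_c \in M_2}(v_c - \beta)$ the residual difference still lies in the Fermat ideal generated by the $v_c^p - v_c$. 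The special structure here --- distinct linear factors in distinct variables, each dividing its own Fermat generator --- is what should make the cancellation legitimate and close the argument.
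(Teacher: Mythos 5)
The paper itself does not prove Theorem~\ref{th3}; it is imported from \cite{sri} and stated without argument, so there is no in-text proof to compare against. Your syntactic computation of the two explicit sides --- cancelling $\prod_{v_c\in M_2}(v_c-\beta)$ from the factored form of $\mathbf{G}$ in relation~(\ref{m8}) (noting $\beta\in\{t+1,\ldots,p\}$ so $(v_c-\beta)$ literally appears in $\prod_{l=t+1}^p(v_c-l)$) and from the factorization of $\mathbf{G'}$ supplied by Theorem~\ref{th2} --- is correct as far as it goes.

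The difficulty is exactly where you flag it, and your proposed resolution does not close it. You assert that because $\prod_{v_c\in M_2}(v_c-\beta)$ is a product of distinct linear factors in distinct variables, each dividing the Fermat generator $v_c^p-v_c$, the cancellation is compatible with the congruence $\mathbf{G}\equiv\mathbf{G'}$. That principle is false. Take the one-variable case $\mathbf{G}(v)=v^{2p-1}$ with $\beta=0$: here $\mathbf{G'}(v)=v$, so $(v-\beta)$ divides both $\mathbf{G}$ and $\mathbf{G'}$, $(v-\beta)^2$ does not divide $\mathbf{G'}$, and $(v-\beta)$ divides $v^p-v$; nevertheless $\mathbf{G}(v)/(v-\beta)=v^{2p-2}\equiv v^{p-1}$ while $\mathbf{G'}(v)/(v-\beta)=1$, and these two disagree at $v=\beta$. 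Thus every hypothesis your argument actually invokes is satisfied, yet the quotient congruence fails. Any valid proof of Theorem~\ref{th3} must therefore use the specific algebraic shape of $\mathbf{G}$ (for instance controlling the $v_c$-degree of the Fermat reduction of $\mathbf{G}/\prod_{v_c\in M_2}(v_c-\beta)$, or arguing through the explicit factored form rather than a generic cancellation lemma) --- structure you allude to in your last sentence but do not actually deploy. As written, the proposal has a genuine gap at precisely the step it labels the main obstacle.
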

We have now developed all of the necessary theory to have a formal proof of Claim \ref{cl3},

\begin{proof} [Proof of Claim \ref{cl3}] 
Consider the polynomial $\mathbf{K}(\boldsymbol{v})$, which is defined as
	
	\begin{equation*}
	\mathbf{K}(\boldsymbol{v}) =\Bigg(\frac{\mathbf{G'}(\boldsymbol{v})}{\prod_{v_c \in M_2}(v_c-\beta)}\Bigg)\prod_{v_c \in M_2}\big(v_c-t\big)\prod_{l\in \mathbb{Z}_p\setminus\{ 1, 2, \ldots, t-1, \beta\}}(v_s-l).
	\end{equation*}
	
	Using congruence relation in Theorem \ref{th2}, the polynomial $\mathbf{K}(\boldsymbol{v})$ can be rewritten as, 
	
	\begin{multline*}
	\mathbf{K}(\boldsymbol{v}) \equiv  
	\prod_{v_c \in M_2}\Bigg(\Big(\prod_{l \in \mathbb{Z}_p\setminus\{t, \beta\}}(v_c-l)\Big)\Big(v_c-t\Big)\Bigg)\\ \bigg(\sum_{\prod_{\substack{r=1\\r \notin K}}^nl_r}\Big(\mathcal{C}_{\prod_{\substack{r=1\\r \notin K}}^nl_r}(v_s)\prod_{l\in \mathbb{Z}_p\setminus\{ 1, 2, \ldots, t-1, \beta\}}(v_s-l)\Big)\prod_{\substack{r = 1\\r \notin K}}^{n}{v_r}^{l_r}\bigg).
	\end{multline*}\\
	
Now, as explained below, we can conclude that $\mathbf{K'}(\boldsymbol{v}) \not\equiv 0$ mod $p$.	
	 The exponent of each $v_c$  in the following products,\\
	\begin{multline}
	\label{e3}
	\prod_{v_c \in M_2}\Bigg(\prod_{l \in \mathbb{Z}_p\setminus\{t, \beta\}}\big(v_c-l\big)\big(v_c-t\big)\Bigg) = \prod_{v_c \in M_2}\Bigg(\prod_{l \in \mathbb{Z}_p\setminus\{\beta\}}(v_c-l)\Bigg),
	\end{multline}
	is $\leq p-1$.\\
	
	From Theorem \ref{th1}, $v_s - \beta$ does not divide $\mathbf{G'}(\boldsymbol{v})$ and congruence relation (\ref{eq10}) guarantee the existence of a $\mathcal{C}_{\prod_{\substack{r=1\\r \notin K}}^nl'_r}(v_s)$ in 
	$$\bigg(\sum_{\prod_{\substack{r=1\\r \notin K}}^nl_r}\Big(\mathcal{C}_{\prod_{\substack{r=1\\r \notin K}}^nl_r}(v_s)\prod_{l\in \mathbb{Z}_p\setminus\{ 1, 2, \ldots, t-1, \beta\}}(v_s-l)\Big)\prod_{\substack{r = 1\\r \notin K}}^{n}{v_r}^{l_r}\bigg)$$ such that $v_s - \beta$ does not divide $\mathcal{C}_{\prod_{\substack{r=1\\r \notin K}}^nl'_r}(v_s)$.
	Therefore, the following polynomial is not a zero polynomial, after applying Fermat's theorem to $v_s$, that is,\\
	\small{\begin{equation}
	\label{e4}
	\underbrace{\Big(\mathcal{C}_{\prod_{\substack{r=1\\r \notin K}}^nl'_r}(v_s)\prod_{l\in \mathbb{Z}_p\setminus\{ 1, 2, \ldots, t-1, \beta\}}(v_s-l)\Big)}_{\substack{ \not\equiv 0\ \textnormal{mod}\ p\\ (\textnormal{after applying the Fermat's theorem to}\ v_s)}}\prod_{\substack{r = 1\\r \notin K}}^{n}{v_r}^{l'_r} + \sum_{\prod_{\substack{r=1\\r \notin K}}^nl_r}\Big(\mathcal{C}_{\prod_{\substack{r=1\\r \notin K}}^nl_r}(v_s)\prod_{l\in \mathbb{Z}_p\setminus\{ 1, 2, \ldots, t-1, \beta\}}(v_s-l)\Big)\prod_{\substack{r = 1\\r \notin K}}^{n}{v_r}^{l_r} \not\equiv 0\ \textnormal{mod}\ p,
	\end{equation}.}

	and the exponent of each $v_r$ ($r\neq s$) and the exponent of $v_s$ (after applying Fermat's theorem)
	in the congruence relation (\ref{e4}) is $\leq p-1$.\\
	
	So, $\mathbf{K'}(\boldsymbol{v}) \not\equiv 0$ mod $p$ as $M_2 \cap \{\{v_r:r\notin K, 1 \leq r \leq n\} \cup \{v_s\}\} = \emptyset$ and $\mathbf{K'}(\boldsymbol{v})$ is the product of above polynomials (\ref{e3}) and (\ref{e4}).\\
	
	Since $\mathbf{K'}(\boldsymbol{v}) \not\equiv 0$ mod $p$, to each of the variable $v_1, v_{2}, \ldots, v_{n}$, we associate the sets $\mathcal{A}_1 = \mathbb{Z}_p, \mathcal{A}_{2} = \mathbb{Z}_p, \ldots,$ $\mathcal{A}_{n} = \mathbb{Z}_p$ respectively. Then there exists $\beta'_{v_1} \in \mathcal{A}_1, \beta'_{v_2} \in \mathcal{A}_{2}, \ldots,$ $\beta'_{v_n} \in \mathcal{A}_{n}$ such that
	
	\begin{equation}
	\label{eq4}
	\mathbf{K'}(\beta'_{v_1}, \beta'_{v_2}, \ldots, \beta'_{v_n}) \not\equiv 0\ \textnormal{mod}\ p.
	\end{equation}\\
	
	From the congruence relation (\ref{eq4}), we can also conclude that $\mathbf{K'}(\boldsymbol{v}) \not\equiv 0$ mod $p$, and $\mathbf{K'}(\boldsymbol{v})$ can be rewritten as,\\
	
	\begin{multline*}
	\mathbf{K'}(\boldsymbol{v}) \equiv \prod_{v_c \in M_2}\big(v_c-t\big)\Bigg(\prod_{v_c \in M_2}\bigg(\prod_{l \in \mathbb{Z}_p\setminus\{t, \beta\}}(v_c-l)\bigg)\bigg(\sum_{\prod_{\substack{r=1\\r \notin K}}^nl_r}\mathcal{C}_{\prod_{\substack{r=1\\r \notin K}}^nl_r}(v_s)\prod_{\substack{r = 1\\r \notin K}}^{n}{v_r}^{l_r}\bigg)\Bigg) \\ \prod_{l\in \mathbb{Z}_p\setminus\{ 1, 2, \ldots, t-1, \beta\}}(v_s-l).
	\end{multline*}\\
	
	And using Theorem \ref{th3} $\mathbf{K'}(\boldsymbol{v})$ can be rewritten as,
	\begin{multline*}
	\mathbf{K}(\boldsymbol{v}) = \prod_{v_c \in M_2}\big(v_c-t\big)\Bigg( \prod_{v_c \in M_1} \Big(\prod_{v_d \in N_{\mathcal{M}_{i-1}\setminus e}(v_c)}({v_c} - v_d)\prod_{l \in \mathbb{Z}_p\setminus\{1, 2, \ldots, t-1\}}({v_c} - l)\Big)\\\prod_{v_c \in M_2} \Big(\prod_{v_d \in N_{\mathcal{M}_{i-1}\setminus e}(v_c)}({v_c} - v_d)\prod_{l \in \mathbb{Z}_p\setminus\{1, 2, \ldots, t-1, t, \beta\}}({v_c} - l)\Big)\prod_{\substack{v_d \in N_{\mathcal{M}_{i-1}\setminus e}(v_s)}}(v_s - v_d)\big(v_s - v_b\big)\Bigg) \prod_{l\in \mathbb{Z}_p\setminus\{ 1, 2, \ldots, t-1, \beta\}}(v_s-l).
	\end{multline*}\\
	
	So, $\mathbf{K}(\boldsymbol{v})$ is equivalent to the following,
	
	\begin{align}
	\label{e5}
	\mathbf{K}(\boldsymbol{v}) &\equiv \prod_{v_c \in V(\mathcal{M}_{i-1}\setminus e)}\Big(\prod_{v_d \in N_{\mathcal{M}_{i-1}\setminus e}(v_c)}(v_c - v_d)\prod_{l \in \mathbb{Z}_p\setminus\mathcal{K}'}(v_c - l)\Big)\big(v_s - v_b\big),
	\end{align} 
this implies,
\begin{align*}
\mathbf{K}(\boldsymbol{v}) &\equiv 
\hat{\mathbf{S}}_{i-1}(\boldsymbol{v})\not\equiv 0\ \textnormal{mod}\ p \nonumber.
\end{align*}
		From the congruence relation (\ref{eq4}), we have $\mathbf{K}(\beta'_{v_1}, \beta'_{v_2}, \ldots, \beta'_{v_n}) \not\equiv 0$ \textnormal{mod} $p$ ($\beta'_{v_j} \in \mathcal{K'}$), and this implies that $\hat{\mathbf{S}}_{i-1}(\beta'_{v_1}, \beta'_{v_2}, \ldots, \beta'_{v_n}) \not\equiv 0$ \textnormal{mod} $p$ ($\beta'_{v_j} \in \mathcal{K'}$). 	
\end{proof}
\begin{REMARK}
Figure \ref{fig:blockcvv} illustrates that just because there exists an $\boldsymbol{\alpha}$ $\alpha_{v_j} \in \mathcal{K}'$ $(1\leq j\leq n)$  such that $\hat{\mathbf{S}}_{i-1}(\boldsymbol{\alpha})\not\equiv 0$ mod $p$, that does not necessarily imply that there exists a $\boldsymbol{\beta}$ $\beta_{v_j} \in \mathcal{K}$ $(1\leq j\leq n)$  such that $\mathbf{S}_{i-1}(\boldsymbol{\beta})\not\equiv 0$ mod $p$.
\end{REMARK}

\begin{COROLLARY}
	\label{cor2}
	Hypothesis \ref{h1} is not true.
\end{COROLLARY}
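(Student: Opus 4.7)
The plan is to extract a proper $\mathcal{K}$-coloring of $\mathcal{M}_{i-1}$ directly from the non-vanishing evaluation furnished by Claim~\ref{cl3}, and then observe that $\mathbf{P}_{i-1}$ is precisely the indicator polynomial for such a coloring. The underlying point is that $\hat{\mathbf{S}}_{i-1}$ and $\mathbf{P}_{i-1}$ are ``proper-coloring checkers'' for the palettes $\mathcal{K}'$ and $\mathcal{K}$ respectively, and $|\mathcal{K}'| = |\mathcal{K}| = t$ with an explicit bijection between them.

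First I would invoke Claim~\ref{cl3} to produce a tuple $\boldsymbol{\beta}' = (\beta'_{v_1}, \ldots, \beta'_{v_n})$ with each $\beta'_{v_j} \in \mathcal{K}' = \{1, 2, \ldots, t-1\} \cup \{\beta\}$ satisfying $\hat{\mathbf{S}}_{i-1}(\boldsymbol{\beta}') \not\equiv 0 \bmod p$. Reading the defining product of $\hat{\mathbf{S}}_{i-1}$ factor-by-factor at this point yields three simultaneous consequences: $\beta'_{v_c} \neq \beta'_{v_d}$ for every edge $v_c v_d$ of $\mathcal{M}_{i-1}\setminus e$ (from the $(v_c - v_d)$ factors); $\beta'_{v_c} \in \mathcal{K}'$ for every vertex $v_c$ (from the factors $\prod_{l \in \mathbb{Z}_p \setminus \mathcal{K}'}(v_c - l)$); and $\beta'_{v_s} \neq \beta'_{v_b}$ (from the explicit factor $(v_s - v_b)$). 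Together these say that $\boldsymbol{\beta}'$ is a proper $\mathcal{K}'$-coloring of the full graph $\mathcal{M}_{i-1}$ (the edge $e$ now included).

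Next, since $\beta \in \mathbb{Z}_p \setminus \{0, 1, \ldots, t\}$ by Theorem~\ref{th1}, the map $\phi : \mathcal{K}' \to \mathcal{K}$ defined by $\phi(k) = k$ for $k \in \{1, \ldots, t-1\}$ and $\phi(\beta) = t$ is a bijection. Setting $\alpha'_{v_j} := \phi(\beta'_{v_j})$ for every $j$ yields a proper $\mathcal{K}$-coloring of $\mathcal{M}_{i-1}$, since $\phi$ preserves distinctness of images and hence respects every adjacency constraint satisfied by $\boldsymbol{\beta}'$.

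Finally, evaluating $\mathbf{P}_{i-1}$ at $\boldsymbol{\alpha}'$ factor-by-factor: the differences $\alpha'_{v_c} - \alpha'_{v_d}$ over edges of $\mathcal{M}_{i-1}$ are nonzero by properness, and each $\prod_{l=t+1}^{p}(\alpha'_{v_c} - l)$ is nonzero since $\alpha'_{v_c} \in \mathcal{K} = \{1, \ldots, t\}$. Hence $\mathbf{P}_{i-1}(\boldsymbol{\alpha}') \not\equiv 0 \bmod p$, contradicting the second clause of Hypothesis~\ref{h1}. Together with Remark~\ref{rm2} (which disposes of the edge-deletion case where a vertex is dropped via Lemma~\ref{l3}), the hypothesis is refuted. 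The main obstacle is really the preparatory machinery already laid down, particularly Theorem~\ref{th1}, which guarantees that $\beta$ can be chosen outside $\mathcal{K} \cup \{0\}$ so that $\phi$ is a genuine bijection onto $\mathcal{K}$; without that, a direct relabeling would fail, which is exactly the warning of the remark immediately preceding this corollary. Once Theorem~\ref{th1} is in hand, the deduction here amounts to no more than this palette relabeling.
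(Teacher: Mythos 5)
Your proposal is correct and follows essentially the same route as the paper: both extract a proper $\mathcal{K}'$-coloring of $\mathcal{M}_{i-1}$ from the non-vanishing point furnished by Claim~\ref{cl3}, then relabel $\beta \mapsto t$ to obtain a proper $\mathcal{K}$-coloring and hence a non-vanishing evaluation of $\mathbf{P}_{i-1}$. The only cosmetic difference is that you phrase the relabeling as an explicit bijection $\phi : \mathcal{K}' \to \mathcal{K}$ and read the coloring directly off $\hat{\mathbf{S}}_{i-1}$, whereas the paper interposes the auxiliary polynomial $\hat{\mathbf{P}}_{i-1}(\boldsymbol{v}) = \hat{\mathbf{S}}_{i-1}(\boldsymbol{v})(v_b - v_s)$ before speaking of recoloring — the substance is identical.
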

\begin{proof}
	We know from Claim \ref{cl3} that there exists a $(\beta'_{v_1}, \beta'_{v_2}, \ldots, \beta'_{v_n})$ $\beta'_{v_j} \in \mathcal{K}'$ $(1\leq j\leq n)$ such that $\hat{\mathbf{S}}_{i-1}(\beta'_{v_1}, \beta'_{v_2}, \ldots, \beta'_{v_n})\not\equiv 0$ mod $p$, that is,
	\begin{align*}
	\hat{\mathbf{S}}_{i-1}(\boldsymbol{v}) &= \prod_{v_c \in V(\mathcal{M}_{i-1}\setminus e)}\Big(\prod_{v_d \in N_{\mathcal{M}_{i-1}\setminus e}(v_c)}(v_c - v_d)\prod_{l \in \mathbb{Z}_p\setminus\mathcal{K}'}(v_c - l)\Big)(v_s-v_b)\ \not\equiv 0\ \text{mod}\ p.
	\end{align*}
	This implies that the following polynomial $\hat{\mathbf{P}}_{i-1}(\boldsymbol{v}) \not\equiv 0\ \text{mod}\ p$, as explained below, 
	\begin{align*}
	\hat{\mathbf{P}}_{i-1}(\boldsymbol{v}) &= \prod_{v_c \in V(\mathcal{M}_{i-1})}\Big(\prod_{v_d \in N_{\mathcal{M}_{i-1}}(v_c)}(v_c - v_d)\prod_{l \in \mathbb{Z}_p\setminus\mathcal{K}'}(v_c - l)\Big),
	\end{align*}
	it can be rewritten as,
	\begin{align*}
	\hat{\mathbf{P}}_{i-1}(\boldsymbol{v}) &= \prod_{v_c \in V(\mathcal{M}_{i-1}\setminus e)}\Big(\prod_{v_d \in N_{\mathcal{M}_{i-1}\setminus e}(v_c)}(v_c - v_d)\prod_{l \in \mathbb{Z}_p\setminus\mathcal{K}'}(v_c - l)\Big)(v_s-v_b)(v_b-v_s),
	\end{align*}
	that is,
	\begin{equation*}
	\hat{\mathbf{P}}_{i-1}(\boldsymbol{v}) = \hat{\mathbf{S}}_{i-1}(\boldsymbol{v})(v_b-v_s).
	\end{equation*}
	Since $\hat{\mathbf{S}}_{i-1}(\beta'_{v_1}, \beta'_{v_2}, \ldots, \beta'_{v_n})\not\equiv 0$ mod $p$, using the Lemma \ref{l4} we can conclude that,
	\begin{equation*}
	\hat{\mathbf{P}}_{i-1}(\beta'_{v_1}, \beta'_{v_2}, \ldots, \beta'_{v_n}) = \hat{\mathbf{S}}_{i-1}(\beta'_{v_1}, \beta'_{v_2}, \ldots, \beta'_{v_n})(\beta'_{v_b}-\beta'_{v_s}) \not\equiv 0\ \text{mod}\ p.
	\end{equation*}	
	Therefore, the mapping 
	$\ell(v_j) = \beta'_{v_j}$ ($1 \leq j \leq n$) ($\beta'_{v_j} \in \mathcal{K'}$) will give \textit{vertex coloring} of the simple graph $\mathcal{M}_{i-1}$.\\
	
On recoloring vertices in $\mathcal{M}_{i-1}$ that are coloured $\beta$ by the colour $t$, we notice that the simple graph $\mathcal{M}_{i-1}$ remains \textit{vertex-colored}. This implies that there exists an $\boldsymbol{\alpha}$ $\alpha_{v_j} \in \mathcal{K}$ $(1\leq j\leq n)$ such that $\mathbf{P}_{i-1}(\boldsymbol{\alpha})\not\equiv 0$ mod $p$, that is,
	\begin{equation*} 
	\mathbf{P}_{i-1}(\boldsymbol{v}) = \prod_{v_c \in V(\mathcal{M}_{i-1})}\Big(\prod_{v_d \in N_{\mathcal{M}_{i-1}}(v_c)}(v_c - v_d)\prod_{l = t+1}^p(v_c - l)\Big) \not\equiv 0\ \text{mod}\ p,
	\end{equation*}
	which implies Hypothesis \ref{h1} is false.
\end{proof}
The Corollary \ref{cor2} has established that Claim \ref{cl2} is true, 
\begin{COROLLARY}
	\label{cor3}
The Claim \ref{cl2} is true, that is, there exists an $n$-tuple $\boldsymbol{\alpha}$ $\alpha_{v_j} \in \mathcal{K}$ $(1\leq j\leq n)$ such that $\mathbf{P}_{0}(\boldsymbol{\alpha})\not\equiv 0$ mod $p$.
\end{COROLLARY}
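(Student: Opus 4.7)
The plan is to derive Corollary \ref{cor3} as an immediate consequence of Claim \ref{cl1} and Corollary \ref{cor2}, via a minimal-counterexample argument along the sequence $\mathcal{M}_0, \mathcal{M}_1, \ldots, \mathcal{M}_{q+1}$. The whole argument is essentially a packaging step, because all the technical content has been carried by the lemmas of Section 3 and by Claim \ref{cl3}; here I only need to combine them with a well-ordering appeal.

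First I would record the base case. By Claim \ref{cl1}, we already know that $\mathbf{P}'_{q+1}(\boldsymbol{v}) \not\equiv 0$ mod $p$, since $\mathcal{M}_{q+1} = \mathcal{M} = K_t$ is trivially $t$-colourable by the identity colouring of its $t$ vertices. Next I would argue by contradiction: suppose that Claim \ref{cl2} fails, i.e., for every $n$-tuple $\boldsymbol{\alpha}$ with $\alpha_{v_j} \in \mathcal{K}$ we have $\mathbf{P}_0(\boldsymbol{\alpha}) \equiv 0$ mod $p$, equivalently $\mathbf{P}'_0(\boldsymbol{v}) \equiv 0$ mod $p$. Then the finite sequence
\begin{equation*}
\mathbf{P}'_{q+1}(\boldsymbol{v}),\ \mathbf{P}'_{q}(\boldsymbol{v}),\ \ldots,\ \mathbf{P}'_{1}(\boldsymbol{v}),\ \mathbf{P}'_{0}(\boldsymbol{v})
\end{equation*}
begins non-zero (mod $p$) and ends zero (mod $p$). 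By the well-ordering of $\{1,2,\ldots,q+1\}$ there is a smallest index $i$ with $q+1 \geq i \geq 1$ such that $\mathbf{P}'_{i}(\boldsymbol{v}) \not\equiv 0$ mod $p$ while $\mathbf{P}'_{i-1}(\boldsymbol{v}) \equiv 0$ mod $p$.

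At this witnessing index $i$, the elementary operation $\mathcal{o}_i$ is either an edge contraction, an edge deletion preserving the vertex set, or an edge deletion shrinking the vertex set by one. The last of these is immediately ruled out by Remark \ref{rm2}, which invokes Lemma \ref{l3} to produce a valid $t$-colouring of $\mathcal{M}_{i-1}$ directly. In the remaining two cases, the pair $(i-1, i)$ satisfies exactly the statement of Hypothesis \ref{h1}. However, Corollary \ref{cor2} has proved that Hypothesis \ref{h1} is false, contradicting the existence of the index $i$. Hence the supposition fails and Claim \ref{cl2} holds, i.e., there exists an $n$-tuple $\boldsymbol{\alpha}$ with $\alpha_{v_j} \in \mathcal{K}$ $(1\leq j\leq n)$ such that $\mathbf{P}_{0}(\boldsymbol{\alpha}) \not\equiv 0$ mod $p$.

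There is no real obstacle at this stage since the hard work has already been absorbed into Claim \ref{cl3} and Corollary \ref{cor2}; the only thing to be careful about is the case distinction on $\mathcal{o}_i$, ensuring that each of the three possible elementary operations either falls under Remark \ref{rm2} or matches the exact hypothesis refuted in Corollary \ref{cor2}. Once that case check is explicitly written out, the minimal-counterexample contradiction closes the argument cleanly and yields Corollary \ref{cor3}, which in turn is the desired conclusion $\chi(G) \leq h(G) = t$ via the vertex colouring $\ell(v_j) = \alpha_{v_j}$.
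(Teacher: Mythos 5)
Your proposal is correct and takes essentially the same route as the paper. The paper gives Corollary~\ref{cor3} no explicit proof of its own --- it is presented as an immediate consequence of the discussion that introduces Hypothesis~\ref{h1} (where the paper itself argues by contradiction that if $\mathbf{P'}_0(\boldsymbol{v})\equiv 0$ while $\mathbf{P'}_{q+1}(\boldsymbol{v})\not\equiv 0$, some index $i$ must witness the transition) together with Corollary~\ref{cor2}, and your minimal-index/well-ordering reconstruction of that implicit argument, including routing the vertex-shrinking deletion through Remark~\ref{rm2}, is exactly what the paper intends.
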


We can use this fact to claim that $\chi(G) \leq h(G)$ in the following corollary,

 \begin{COROLLARY}
 	\label{cor1}
 	Hadwiger conjecture is true.
 \end{COROLLARY}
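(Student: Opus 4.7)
The plan is to read off Hadwiger's inequality $\chi(G)\leq h(G)$ directly from the non-vanishing statement provided by Corollary \ref{cor3}, by unpacking what the polynomial $\mathbf{P}_0(\boldsymbol{v})$ encodes. Recall from \eqref{e1} that
\begin{equation*}
\mathbf{P}_0(\boldsymbol{v})=\prod_{v_c\in V(G)}\Big(\prod_{v_d\in N_G(v_c)}(v_c-v_d)\prod_{l=t+1}^{p}(v_c-l)\Big),
\end{equation*}
with $t=h(G)$ and $p>2\Delta^2 n$. The structure of the proof is therefore short: first invoke Corollary \ref{cor3} to produce an $n$-tuple $\boldsymbol{\alpha}$ with $\alpha_{v_j}\in\mathcal{K}=\{1,2,\ldots,t\}$ and $\mathbf{P}_0(\boldsymbol{\alpha})\not\equiv 0\pmod p$, then extract a proper $t$-coloring from $\boldsymbol{\alpha}$, and finally conclude $\chi(G)\leq t=h(G)$.

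Concretely, I would argue as follows. Fix the $n$-tuple $\boldsymbol{\alpha}$ granted by Corollary \ref{cor3} and define $\ell\colon V(G)\to\mathcal{K}$ by $\ell(v_j)=\alpha_{v_j}$ for $1\leq j\leq n$. Because every factor of $\mathbf{P}_0(\boldsymbol{\alpha})$ is an element of $\mathbb{Z}_p$ and their product is nonzero modulo $p$, each individual factor must be nonzero modulo $p$. The factors $\prod_{l=t+1}^{p}(v_c-l)$ being nonzero at $v_c=\alpha_{v_c}$ forces $\alpha_{v_c}\notin\{t+1,t+2,\ldots,p\}$, and combined with $\alpha_{v_c}\in\mathcal{K}$ this confirms $\ell(v_c)\in\{1,2,\ldots,t\}$. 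The factors $(v_c-v_d)$ over adjacent pairs being nonzero force $\alpha_{v_c}\neq\alpha_{v_d}$ whenever $v_cv_d\in E(G)$, so $\ell$ is a proper vertex coloring of $G$ with at most $t$ colors.

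Consequently $\chi(G)\leq t$, and by hypothesis $t=h(G)$ since $\mathcal{M}=K_t$ is the largest complete graph to which $G$ contracts. Therefore $\chi(G)\leq h(G)$, which is precisely Hadwiger's conjecture as reformulated in Section~1.

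The step that does any actual work here has already been carried out, namely Corollary \ref{cor3} (which rests on the machinery of Lemmas \ref{l1}--\ref{l4}, Claim \ref{cl3}, and Corollary \ref{cor2}); the present corollary is essentially a translation between the algebraic non-vanishing statement and the combinatorial coloring statement. The only subtlety worth flagging explicitly is that the bound $p>2\Delta^2 n$ guarantees that ``$\not\equiv 0\pmod p$'' is really the intended non-vanishing, and that the factor $\prod_{l=t+1}^{p}(v_c-l)$ genuinely enforces $\alpha_{v_c}\in\mathcal{K}$ (as highlighted in the earlier Remark following \eqref{e1}); so I would make that implication explicit rather than obstacular.
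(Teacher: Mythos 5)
Your proposal is correct and takes essentially the same route as the paper: it cites the established non-vanishing of $\mathbf{P}_0$ (via Corollary \ref{cor3}, which is just the assertion that Claim \ref{cl2} holds) and reads off the coloring $\ell(v_j)=\alpha_{v_j}$. You spell out the translation step (each factor of a nonzero product in the field $\mathbb{Z}_p$ is nonzero, so adjacent vertices get distinct colors and the tail product forces colors into $\mathcal{K}$) more explicitly than the corollary's own proof, but the paper had already recorded exactly this equivalence in the discussion following \eqref{e1} and the adjacent Remark, so no new idea is introduced.
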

\begin{proof}
We know that there exists an $n$-tuple $\boldsymbol{\alpha}$ $\alpha_{v_j} \in \mathcal{K}$ $(1\leq j\leq n)$ such that $\mathbf{P}_{0}(\boldsymbol{\alpha})\not\equiv 0$ mod $p$ because of Claim \ref{cl2}. The mapping $\ell(v_j) = \alpha_{v_j}$ ($1 \leq j \leq n$) will use $t$-colors to colour the vertices of the given simple graph $G$ with $n$ vertices.

\end{proof}
The Corollary \ref{cor1} and Wagner's Theorem together establish that the Four-Color problem is true,
\begin{COROLLARY}
	\label{cor4}
	Every planar graph is 4-colorable.
\end{COROLLARY}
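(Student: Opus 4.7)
The plan is to combine two results already established in the paper: Wagner's Theorem, which bounds the Hadwiger number of any planar graph, and Corollary \ref{cor1}, which establishes Hadwiger's inequality $\chi(G)\leq h(G)$ for every simple graph. Each result on its own is insufficient for the Four-Color Theorem, but their composition yields it almost immediately, so the proof will be a short chain of inequalities rather than a new constructive argument.

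First I would let $G$ be an arbitrary planar simple graph. By Wagner's Theorem (stated as Theorem 1.1 in the introduction), the Hadwiger number satisfies $h(G)\leq 4$, because a planar graph cannot be contracted to $K_5$. Next I would invoke Corollary \ref{cor1}, the main result of the paper, which asserts that the chromatic number is upper-bounded by the Hadwiger number for every simple graph. Chaining these two bounds gives
\begin{equation*}
\chi(G)\;\leq\;h(G)\;\leq\;4,
\end{equation*}
so any proper vertex coloring of $G$ can be realized using at most $4$ colors, which is precisely the statement that $G$ is $4$-colorable.

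There is essentially no obstacle here, since all the substantive work has already been done: Wagner's Theorem is a classical planarity-to-minor characterization cited from \cite{wag}, and Corollary \ref{cor1} is the culmination of the algebraic machinery developed through Lemmas \ref{l1}--\ref{l4}, Claim \ref{cl2}, and Corollaries \ref{cor2}--\ref{cor3}. The only point I would be careful about in the write-up is the quantifier: one needs the inequality $\chi(G)\leq h(G)$ for \emph{every} simple graph, not just for graphs excluding a fixed $K_{t+1}$ minor, so that it applies uniformly to all planar graphs regardless of which value of $h(G)\in\{1,2,3,4\}$ they attain. Since Corollary \ref{cor1} is stated in full generality, this is automatic, and the corollary follows in two lines.
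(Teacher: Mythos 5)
Your proof is correct and mirrors the paper's own argument exactly: both invoke Wagner's Theorem to get $h(G)\leq 4$ for planar $G$ and then apply Corollary~\ref{cor1} to conclude $\chi(G)\leq h(G)\leq 4$. No further comment is needed.
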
	
\begin{proof}
We know from Wagner's Theorem that the \textit{Hadwiger number} of a planar graph is at most $4$. And Corollary 3.5 guarantees that the upper bound of the \textit{chromatic number} of a planar graph is $4$.
\end{proof}
\begin{COROLLARY}
	\label{cor5}
	The Weak Hadwiger Conjecture is true.
\end{COROLLARY}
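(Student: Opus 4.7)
The plan is to derive the Weak Hadwiger Conjecture as an immediate consequence of the (already established) strong Hadwiger Conjecture, namely Corollary \ref{cor1}. Recall that the Weak Hadwiger Conjecture asserts the existence of some constant $c$ such that every graph with no $K_t$-minor is $ct$-colourable; this is a strictly weaker statement than $\chi(G) \leq h(G)$, which we have already secured.

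First I would let $G$ be an arbitrary simple graph with no $K_t$-minor, and observe that by the definition of the Hadwiger number this means $h(G) \leq t-1 < t$. Next, I would invoke Corollary \ref{cor1}, which asserts $\chi(G) \leq h(G)$, to conclude $\chi(G) \leq t-1 \leq t$. Therefore $G$ is $t$-colourable, and in particular $ct$-colourable for the choice $c = 1$.

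Since $G$ was an arbitrary graph with no $K_t$-minor, this establishes the Weak Hadwiger Conjecture with constant $c = 1$, which is the best possible constant one could hope for. There is no real obstacle here: the only thing to be careful about is the precise indexing in the statement of the Weak Hadwiger Conjecture (no $K_t$-minor versus no $K_{t+1}$-minor), but in either formulation the conclusion follows directly from $\chi(G) \leq h(G)$ by taking $c = 1$. Thus Corollary \ref{cor1} subsumes the weak form entirely, and the proof reduces to a one-line invocation of the main result.
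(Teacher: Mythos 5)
Your proposal is correct and matches the paper's own argument: the paper's proof of Corollary \ref{cor5} is simply the one-line observation that it follows from Corollary \ref{cor1}, which is precisely your reduction with $c = 1$. You have merely spelled out the indexing details (no $K_t$-minor implies $h(G) \leq t-1$, hence $\chi(G) \leq t-1 \leq ct$) that the paper leaves implicit.
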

\begin{proof}
	This follows from Corollary \ref{cor1}.
	\end{proof}

\end{document}